\documentclass[10pt]{amsart}
\usepackage{amsmath}
  \usepackage{graphics} 
\usepackage{graphicx}  
  \usepackage{epsfig} 
 \usepackage[colorlinks=true]{hyperref}
  \usepackage{float}
  \usepackage{caption}
  \usepackage[english]{babel}
\usepackage{amsmath}
\usepackage{wrapfig}
\usepackage{graphicx}
\usepackage{subcaption}
\usepackage{stackengine}
\usepackage{comment}
\newcommand\dhookrightarrow{\mathrel{%
  \ensurestackMath{\stackanchor[.1ex]{\hookrightarrow}{\hookrightarrow}}
}}
\usepackage{lscape}
\usepackage{amssymb}


\setcounter{topnumber}{2}
\setcounter{bottomnumber}{2}
\setcounter{totalnumber}{4}

\usepackage{amsthm}
\theoremstyle{plain}

\newtheorem*{theorem*}{Theorem}
\newtheorem{theorem}{Theorem}[section]
\newtheorem{corollary}{Corollary}

\newtheorem{lemma}[theorem]{Lemma}

\theoremstyle{definition}
\newtheorem{definition}[theorem]{Definition}
\newtheorem{remark}{Remark}

\title[PDE FTE competition ] 
    {The effect of ``very fast" dispersal on two species competition with drift}

 \keywords{competition, co-existence, degenerate parabolic problem, weak solutions, global existence, finite time extinction}

\begin{document}
\maketitle

\centerline{Rana D. Parshad$^{1}$, Erin Ellefsen$^{2}$ and Vaibhava Srivastava$^{1}$  }
\medskip
{\footnotesize

   \medskip
   
    \centerline{1) Department of Mathematics}
 \centerline{Iowa State University}
   \centerline{Ames, IA 50011, USA.}

 \medskip
\centerline{2) Department of Mathematics, Statistics and Computer Science}
 \centerline{St. Olaf College}
   \centerline{Northfield, MN  55057, USA}

 }

\begin{abstract}

Classical theory predicts that for two competing populations subject to a constant downstream drift, the faster disperser will competitively exclude the slower disperser. In the current work, we consider a novel model of a ``much faster" dispersing  species, modeled via a $p$-laplacian operator, competing with a slower disperser. We prove global existence of weak solutions to this model for any positive initial condition, in the regime $\frac{3}{2} < p <2$. Counterintuitively, we  show that while the faster disperser always wins - the ``much faster" disperser could actually lose, for certain initial data. Several numerical simulations are conducted to confirm our analytical findings. Our results have implications for biodiversity, refuge design, and improved biological control, driven by habitat fragmentation and climate change.

%

\end{abstract}

\section{Introduction}

Competition pervades many different strata of the natural world. These include large- and small-scale ecological systems, social systems, political systems and uprisings, military invasions, biological invasions, economic and financial applications, epidemics, virology, and even linguistic systems \cite{porter2008competition, vickers1995concepts, burt2003social}. For these reasons, the classical two-species competition model has been intensively investigated over the years \cite{Cantrell2003, Chen2020, DeAngelis2016, He2013b, He2016a, He2016b, Hutson2003, Lou2006a, Lou2006b, Li2019, Lam2012, Nagahara2018, Nagahara2020}. Species continuously disperse in spatial domains in search of food, mates, resources, and even refuge \cite{zhou2016qualitative}, leading to these dispersal patterns shaping many ecological communities. Thus, of keen interest is the case where there is a resource function that is spatially dependent \cite{He2019, He2013a}. In particular, studying competition systems 
modeled with a spatially explicit reaction-diffusion framework is very relevant and helps predict equilibrium states and their stability, spatial patterns, traveling or invasion waves, all of which help us discern between possible coexistence, exclusion, or extinction of species.  

 Another important mechanism of species movement is drift. This is mostly seen in aquatic systems, such as river or stream flows, where the flow of water will determine the direction of movement of species in the water body, typically from an upstream location to a downstream location - with many works focused on competition in the presence of drift \cite{zhou2016qualitative, cantrell2007advection, zhou2016lotka}. Recent work has also focused on the drift in river or stream communities with a network structure and dispersal in eco-epidemic systems \cite{chen2023evolution, chen2024evolution, salako2024degenerate}. Drift between crop fields could increase in the foreseeable future due to climatic changes, with precipitation projected to increase in the North Central United States over the next several years \cite{lee2023ipcc}. Thus, in these settings, landscape design from a pest or invasive species control point of view becomes increasingly important \cite{haan2020predicting}. Additionally, human activity continues to fragment natural habitat, increasing the importance of understanding species dispersal and the ensuing effects on competition in fragmented habitats \cite{rohwader2022foraging, fletcher2018habitat}.
Due to these environmental changes, species will have to adapt their dispersal patterns. Otherwise, they will inadvertently be driven to extinction. This could also be a telling factor for invasive species control - as climate conditions turn favorable for an invader and unfavorable for the resident, the invader might be able to fill a vacant ecological niche \cite{lekevivcius2009vacant}, that might otherwise not be available. 

Therefore, in such settings and many others, the classical models of two species competition need to be revisited. There is much motivation for non-linear, non-local, or density dependent dispersal in biology and ecology, such as in long range dispersal of birds, seeds, and insects, Levy flights and models with memory eﬀects \cite{Ellefsen-21-2, Ellefsen21-1, Ellefsen23}. Such mechanisms are also applicable in other areas of science and engineering, such as gas dynamics and kinetics, thin film dynamics, plasma physics and nuclear technology \cite{naldi2010mathematical, chapman2015long, stinga2023fractional, song2019spatiotemporal, bonforte2024cauchy, vazquez2006porous, antontsev2015evolution}.
The current manuscript considers a two-species competition model, with both drift and dispersal, where one of the competitors has the capacity to move ``faster". Note, regular diffusion is classically modeled via the Laplacian - however, we model the movement of this ``faster" species via the $p$-Laplacian \cite{antontsev2015evolution}. 
The effect of dispersal modeled via a $p$-Laplacian operator, has been recently considered in several works spanning competitive systems and chemotaxis systems \cite{wang2023global, yang2022global, cong2016degenerate, rani2024global, li2020global}. The problem is ``degenerate" \cite{dibenedetto2012degenerate}, and the analysis herein is more difficult than its regular diffusion counterpart. We present the details of the model next, followed by its analysis, and investigation of dynamics, as well as numerical simulations.

\section{The model formulation}

\subsection{Movement Operator}

Consider a species $v$ dispersing over a spatial domain $\Omega$. Its dynamics are typically governed by a diffusion equation,
$v_{t} = \Delta v + f(v)$, where $\Delta v$ represents movement by diffusion, and the other dynamics such as growth, death, competition, depredation are embedded in $f(v)$. One can define a movement operator $L: H^{2}(\Omega) \mapsto L^{2}(\Omega)$, where $ \mathcal{L}(v) = \Delta v$.
We take the following approach to modeling movement. Define, 
\begin{eqnarray*}
\mathcal{L}^{1}(v)  
&=& \mathcal{L}^{1}((1-k)v + k(v)), \ 0<k<1 \  \nonumber \\
&=& (1-k)L(v) + kL^{*}(v) \nonumber \\
&=& (1-k) \Delta v + k \Delta_{p} v,  \nonumber \\
\end{eqnarray*}
 where $\boxed{L^{*}(v) = \Delta_{p} v =  \nabla \cdot (|\nabla v|^{p-2}  \nabla v) \approx - v^{p-1}}, 1<p\leq 2$. The operator $L^{*}(v)$ will play the role of fast diffusion. However, only a (small) fraction of the population is affected by this motion, which is modeled via the fraction $k \in (0,1)$.

\begin{remark}
The movement operator $\mathcal{L}^{1}$, provides a way to formalize the action in which we have a fraction of the population moving via regular diffusion and the other fraction moving via fast diffusion.
\end{remark}

Based on the above formulation, we can consider a PDE, representing the interaction of two competing species $u(x, t)$ and $v(x, t)$, with spatially dependent growth function $m(x) \in L^{\infty}(\Omega))$,

\begin{equation}
\label{eq:pde_model}
\left\{ \begin{array}{ll}
u_t &= d_1 \nabla \cdot (\nabla u) +  u\Big(m(x)-u-v\Big), \quad x\in \Omega, t>0\\
		v_t &= \nabla \cdot a(x,v,\nabla v)  + v\Big(m(x)-u-v\Big), \quad x\in\Omega,t>0, \\
	 a(x,v,\nabla v) & = \Big( d_2 (1-k)\nabla v  + k |\nabla v|^{p-2}  \nabla v \Big) , \ p \in (1,2], \ 0 \leq k \leq 1 \\
 \nabla u \cdot \eta &=a \cdot \eta=0, \quad x\in \partial \Omega \\
		u(x,0)&=u_0(x), v(x,0)=v_{0} (x), \quad x\in \Omega \\
\end{array}\right.
\end{equation}
where all the parameters $d_i$ $(i=1,2)$ and $k$ are positive and $\Omega \subset \mathbb{R}^n$ is a bounded domain with smooth boundary.

\begin{lemma}
Consider the system \eqref{eq:pde_model}. Then $\nabla a(x,v,\nabla v) \cdot \eta  \iff \nabla v \cdot \eta = 0$.
\end{lemma}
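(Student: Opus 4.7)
Reading the statement together with the boundary condition in \eqref{eq:pde_model}, I interpret the lemma as the assertion that on $\partial\Omega$ one has $a(x,v,\nabla v)\cdot\eta=0$ if and only if $\nabla v\cdot\eta=0$. The plan is simply to exploit the fact that $a$ is, pointwise, a (nonnegative) scalar multiple of $\nabla v$.

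First I would factor the flux. From the definition of $a$ in \eqref{eq:pde_model},
\[
a(x,v,\nabla v)=\bigl[d_{2}(1-k)+k|\nabla v|^{p-2}\bigr]\nabla v=:\lambda(|\nabla v|)\,\nabla v.
\]
Taking the inner product with $\eta$ yields $a\cdot\eta=\lambda(|\nabla v|)\,(\nabla v\cdot\eta)$, so the implication $\nabla v\cdot\eta=0\Rightarrow a\cdot\eta=0$ is immediate.

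For the converse, split according to whether $\nabla v$ vanishes at the boundary point. If $\nabla v=0$ there is nothing to prove. If $\nabla v\neq 0$, then since $d_{2}>0$, $k\in[0,1]$, and $|\nabla v|^{p-2}>0$ (recall $1<p\le 2$), the scalar $\lambda(|\nabla v|)$ is strictly positive, so $a\cdot\eta=0$ forces $\nabla v\cdot\eta=0$.

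There is essentially no obstacle here; the only subtlety is that when $p<2$ the coefficient $|\nabla v|^{p-2}$ is singular at $\nabla v=0$, but this point is exactly the case where $\nabla v\cdot\eta=0$ holds trivially, so the singularity is harmless. Thus the two boundary conditions are interchangeable throughout the analysis of \eqref{eq:pde_model}.
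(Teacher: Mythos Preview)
Your argument is correct and matches the paper's own proof: factor $a\cdot\eta=\bigl[d_2(1-k)+k|\nabla v|^{p-2}\bigr](\nabla v\cdot\eta)$ and invoke positivity of the scalar factor. Your additional remark on the harmless singularity at $\nabla v=0$ is a nice clarification the paper omits.
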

Consider the boundary conditions for $v$,
\[ a \cdot \eta = \Big( d_2 (1-k) \nabla v + k |\nabla v|^{p-2} \cdot \nabla v \Big) \cdot \eta \hspace{.05in} =0 \iff \nabla v \cdot \eta \Big( d_2 (1-k) + k |\nabla v|^{p-2} \Big)=0.\]
By the positivity of the bracket term, we can consider the Neumann boundary conditions for $v$. Consider the system \eqref{eq:pde_model} updated boundary conditions:
\begin{align}\label{pde_model_bc}
	\begin{split}
		 \nabla u \cdot \eta &=\nabla v \cdot \eta=0, \quad x\in \partial \Omega.
	\end{split}
\end{align}

We now state a classical result \cite{dockery1998evolution},
\begin{theorem}[Slower disperser wins]
\label{thm:classic_pde}
Consider \eqref{eq:pde_model} with $p=2$ and  \newline $d_2(1-k) + k < d_1$. Then for any choice of positive initial data $(u_0(x),v_0(x))$, the solution $(u,v)$ converges uniformly to $(0,v^{*})$ as $t\to \infty.$
\end{theorem}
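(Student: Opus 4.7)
The plan is to reduce the system to the classical reaction--diffusion competition model and then invoke the theorem of Dockery--Hutson--Mischaikow--Pernarowski cited as \cite{dockery1998evolution}. The key observation is that at $p=2$ the $p$-Laplacian collapses to the Laplacian, since $|\nabla v|^{p-2} = 1$. Hence
\[
a(x,v,\nabla v) \;=\; d_2(1-k)\nabla v + k\,|\nabla v|^{0}\nabla v \;=\; \bigl(d_2(1-k)+k\bigr)\nabla v,
\]
so if we define the effective diffusion rate $\tilde d_2 := d_2(1-k)+k$, the $v$-equation becomes $v_t = \tilde d_2 \Delta v + v(m(x)-u-v)$. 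The whole system \eqref{eq:pde_model} is therefore the standard two-species Lotka--Volterra competition--diffusion system with linear diffusion rates $(d_1,\tilde d_2)$, identical growth/competition kinetics, and homogeneous Neumann boundary data on the bounded smooth domain $\Omega$.

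Next, I would translate the hypothesis $d_2(1-k)+k < d_1$ as $\tilde d_2 < d_1$, i.e., $v$ is the slower disperser. The semi-trivial equilibrium $(0,v^{*})$ is the unique positive steady state of the scalar problem $\tilde d_2 \Delta v^{*} + v^{*}(m(x)-v^{*})=0$ with $\nabla v^{*}\cdot\eta = 0$; existence and uniqueness are standard via sub/super-solutions under the customary assumption that the principal eigenvalue of $-\tilde d_2 \Delta - m(x)$ with Neumann data is negative (implicit in the statement, since otherwise the only nonnegative steady state is $0$). With the system now in classical form and the slower-disperser inequality verified, the uniform convergence $(u,v)\to(0,v^{*})$ as $t\to\infty$ for every componentwise-positive $(u_0,v_0)$ is exactly the conclusion of \cite{dockery1998evolution}, established there through monotonicity of the competitive semiflow on the cone $K = [0,\infty)\times (-\infty,0]$ (under the standard order reversal for competition), comparison with the two single-species subsystems to trap the orbit between semi-trivial equilibria, and a Lyapunov-type argument that rules out coexistence when diffusion rates differ.

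There is no substantive obstacle in this reduction: the degenerate nonlinearity simply vanishes at $p=2$, and the proof reduces to bookkeeping that identifies $\tilde d_2$ and checks that the cited theorem's hypotheses (strictly ordered positive diffusion rates, positive initial data, Neumann data on a bounded smooth domain, admissible nonconstant resource $m\in L^\infty(\Omega)$) all hold. If one wanted a self-contained proof rather than a citation, the only nontrivial piece would be re-establishing the global attractivity of $(0,v^{*})$ via the monotone dynamical systems machinery of Hsu--Smith--Waltman, since the local instability of the interior equilibria and of $(u^{*},0)$ follows from a direct principal-eigenvalue computation comparing $\lambda_1(d_1,m - v^{*})$ with $\lambda_1(\tilde d_2,m-v^{*}) = 0$.
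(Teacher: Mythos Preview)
Your proposal is correct and matches the paper's approach: the paper does not give a self-contained proof but simply remarks that ``the proof follows via standard techniques and can be found in \cite{dockery1998evolution, ni2011mathematics}.'' Your reduction---observing that $p=2$ collapses $a(x,v,\nabla v)$ to $\tilde d_2\nabla v$ with $\tilde d_2 = d_2(1-k)+k < d_1$ and then invoking the Dockery--Hutson--Mischaikow--Pernarowski theorem---is exactly the intended argument, only made more explicit than in the paper itself.
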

\noindent The proof follows via standard techniques and can be found in \cite{dockery1998evolution, ni2011mathematics}.
In the case $p=2$, solutions to \eqref{eq:pde_model} are classical. However, in the case $1<p<2$, several degeneracies can occur, and one cannot expect a classical solution, even begining from smooth initial conditions. Therefore, we must define weak solutions,

\begin{definition}[Weak solution]\label{def:weak}
A measurable function $v$ is a local weak sub (super) solution of \eqref{eq:pde_model} in $\Omega_{T}$ if 

\begin{equation}
v \in C_{loc}(0,T;L^{2}_{loc}(\Omega) \cap L^{p}_{loc}(0,T;W^{1,p}_{loc}(\Omega) ),
\end{equation}
and for every compact subset $K$ of $\Omega$ and for every subinterval $[t_{1},t_{2}]$ of $(0,T]$,

\begin{equation}
\int_{K} v \phi dx \vert^{t_{2}}_{t_{1}}  + \int^{t_{2}}_{t_{1}} \int_{K} \left(   -v \phi_{t}   +  a(x,v,\nabla v) \cdot \nabla \phi    \right) dx d \tau\geq (\leq)  \int^{t_{2}}_{t_{1}} \int_{K}  b(x,\tau,v) \phi dx d \tau,
\end{equation}
for all test functions $\phi \in W^{1,2}_{loc}(0,T;L^{2}(K) \cap L^{p}_{loc}(0,T;W^{1,p}_{0}(K) ), \ \phi \geq 0$. 
\end{definition}

We next consider the classical competition model, for two competing species $u,v$, subject to a unidirectional drift \cite{lou2014evolution},
\begin{equation}
\label{eq:pde_modeldr}
\left\{ \begin{array}{ll}
u_t = d_1 u_{xx} -q u_x +  u\Big(m-u-v\Big), \quad x\in [0,L], t>0\\
		v_t = d_2 v_{xx} -q v_x  + v\Big(m-u-v\Big), \quad x\in [0,L],t>0, \\
	   d_{1}u_{x}(0,t) - qu(0,t) = d_{2}v_{x}(0,t) - qv(0,t) = 0 , \ u_{x}(L,t)  = v_{x}(L,t) = 0. \\
		u(x,0)=u_0(x), v(x,0)=v_{0} (x), \quad x\in [0,L] \\
\end{array}\right.
\end{equation}
    We recap the classical result \cite{lou2014evolution} pertaining to \eqref{eq:pde_modeldr},

    \begin{theorem}[Faster disperser wins]
    \label{thm:clde}
Consider \eqref{eq:pde_modeldr}, Suppose that $q, m$ are positive constants. If $d_2 > d_1$, then $( 0,v^{*})$, whenever it exists, is globally asymptotically stable.
    \end{theorem}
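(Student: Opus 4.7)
The plan is to combine the theory of strongly monotone competitive semiflows with a principal-eigenvalue analysis of the linearization at the two semi-trivial steady states. The first step is routine: establish nonnegativity, global existence, and uniform boundedness of classical solutions of \eqref{eq:pde_modeldr} using parabolic comparison (the logistic nonlinearity $m-u-v$ gives an $L^\infty$ bound via the upper solution $\max\{\|m\|_\infty,\|u_0\|_\infty,\|v_0\|_\infty\}$), and verify that the semiflow on $C([0,L])\times C([0,L])$ is strongly order-preserving with respect to the competitive order $(u_1,v_1) \preceq (u_2,v_2) \Longleftrightarrow u_1\le u_2,\ v_1\ge v_2$. Together with compactness of the semiflow for $t>0$, this places us inside the Hsu--Smith--Waltman framework for two-species competitive systems.

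The second step is to identify the fixed points on the boundary of the positive cone. Under the Robin inflow / Neumann outflow conditions, the scalar logistic equation $w_t = d w_{xx} - q w_x + w(m-w)$ admits a unique positive steady state whenever the principal eigenvalue of $d\partial_{xx} - q\partial_x + m$ (with the same boundary conditions) is positive; call these $u^*$ and $v^*$. Under the hypothesis that $(0,v^*)$ exists, the Hsu--Smith--Waltman trichotomy reduces the question of global asymptotic stability of $(0,v^*)$ to showing: (i) $(u^*,0)$ is linearly unstable, (ii) $(0,v^*)$ is linearly stable, and (iii) there is no positive coexistence equilibrium. Items (i) and (ii) then automatically rule out (iii) by the standard index argument, so the whole theorem collapses to a sign computation for two principal eigenvalues.

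The heart of the proof is therefore the monotonicity of the map
\[
d \;\longmapsto\; \lambda_1(d) := \text{principal eigenvalue of } \bigl(d \phi_{xx} - q\phi_x + (m-w^*(d))\phi\bigr),
\]
under the mixed boundary conditions $d\phi_x(0)-q\phi(0)=0$, $\phi_x(L)=0$, where $w^*(d)$ is the corresponding scalar steady state. Using the substitution $\phi = e^{qx/(2d)}\psi$, the operator becomes self-adjoint in a weighted $L^2$, so $\lambda_1(d)$ admits a Rayleigh quotient. Differentiating the eigenvalue equation in $d$, pairing with the (adjoint) eigenfunction, and using that $\partial_d w^*(d) \le 0$ (which itself comes from a sub-/super-solution monotonicity argument for the scalar equation in the advective setting), one shows $\lambda_1'(d)>0$. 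Consequently, at $(u^*,0)$ the invasion eigenvalue for $v$ (with coefficient $d_2$) is strictly larger than zero whenever $d_2>d_1$, giving instability of $(u^*,0)$; and at $(0,v^*)$ the invasion eigenvalue for $u$ (with coefficient $d_1<d_2$) is strictly negative, giving local stability of $(0,v^*)$.

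The main obstacle is the monotonicity statement $\lambda_1'(d)>0$: the drift term destroys self-adjointness, the symmetrizing weight $e^{-qx/d}$ itself depends on $d$, and $w^*(d)$ enters the potential, so the derivative must be tracked carefully via the adjoint eigenfunction identity rather than a naive Rayleigh comparison. Once this monotonicity and the sub-/super-solution estimate on $\partial_d w^*$ are in hand, instability of $(u^*,0)$, stability of $(0,v^*)$, and the Hsu--Smith--Waltman dichotomy combine to give that every solution with $u_0,v_0\ge 0$ and $v_0\not\equiv 0$ converges to $(0,v^*)$, as stated. The remaining bookkeeping (uniqueness of $v^*$, continuity of the semiflow, application of the dichotomy) is standard and I would cite \cite{lou2014evolution, ni2011mathematics} for the routine parts.
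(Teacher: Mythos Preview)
The paper does not give its own proof here; it simply cites \cite{lou2014evolution}. Your architecture---monotone competitive semiflow, the Hsu--Smith--Waltman trichotomy, and a sign analysis of the two invasion eigenvalues---is precisely the framework of that reference, so at the structural level you and the paper's source agree.

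There is one genuine slip. As you have written it, $\lambda_1(d)$ is the principal eigenvalue of $d\phi_{xx}-q\phi_x+(m-w^*(d))\phi$ with \emph{both} the diffusion coefficient and the potential indexed by the same $d$. But then $w^*(d)$ is itself a positive eigenfunction with eigenvalue zero, so $\lambda_1(d)\equiv 0$ for every $d$ and there is nothing to differentiate; your appeal to $\partial_d w^*\le 0$ is then beside the point (and, incidentally, that monotonicity is not obvious under these advective boundary conditions). The invasion analysis requires you to \emph{freeze} the potential and vary only the diffusion: at $(u^*,0)$ the relevant object is $\mu(d):=\lambda_1\bigl(d\,\partial_{xx}-q\,\partial_x+(m-u^*)\bigr)$ with $u^*=w^*(d_1)$ fixed, so that $\mu(d_1)=0$ and you must show $\mu(d_2)>0$ for $d_2>d_1$; symmetrically at $(0,v^*)$ with $v^*=w^*(d_2)$ fixed and $d=d_1$. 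Once reformulated this way the $\partial_d w^*$ term disappears from your derivative computation, and the sign of $\mu'(d)$ reduces to a boundary-term identity obtained by pairing with the adjoint eigenfunction and integrating by parts---this is exactly where the Danckwerts condition at $x=0$ and the Neumann condition at $x=L$ in \eqref{eq:pde_modeldr} enter decisively. With that correction the remainder of your outline goes through.
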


\subsection{The model with a ``much faster" diffuser}

We now consider the situation where there will be a ``much faster" disperser.


\begin{equation}
\label{eq:pde_modeld2-k}
\left\{ \begin{array}{ll}
u_t &= d_1 \nabla \cdot (\nabla u) - \nabla\cdot (\textbf{q}  u)+  u\Big(m(x)-u-v\Big), \quad x\in \Omega, t>0\\		v_t &= \nabla \cdot a(\nabla v)  - \nabla\cdot (\textbf{q}  v) + v\Big(m(x)-u-v\Big), \quad x\in\Omega,t>0, \\
	 a(\nabla v) & = \Big( d_2 (1-k)\nabla v  + d_2 k  |\nabla v|^{p-2}  \nabla v \Big) , \ p \in (1,2), \ 0 \leq k \leq 1 \\
 (d_{1}\nabla u - \textbf{q} u)  \cdot \eta &= (d_2   |\nabla v|^{p-2}  \nabla v - \textbf{q} v)  \cdot \eta =0, \quad x\in \partial \Omega_{1} \\
 \nabla u \cdot \eta &= |\nabla v|^{p-2} \nabla v \cdot \eta=0, \quad x\in \partial \Omega_{2} \\
		u(x,0)&=u_0(x), v(x,0)=v_{0} (x), \quad x\in \Omega.
\end{array}\right.
\end{equation}
Here we consider the general model in $n=1,2$, for the drift case, with the much faster dispersal, modeled via a $p$-Laplacian operator. The boundary is $\partial \Omega = \partial \Omega_{1} + \partial \Omega_{2}$, where $\partial \Omega_{1}$ is the portion of the boundary that corresponds to the upstream end, and $\partial \Omega_{2}$ corresponds to the downstream end. We impose the classical Danckwerts boundary condition.
When $k=1$, this corresponds to a special case of the above model where there is only ``fast" diffusion in the $v$ species, and we assume constant resource function $m(x) = m$,

\begin{equation}
\label{eq:pde_modeld211}
\left\{ \begin{array}{ll}
u_t &= d_1 \nabla \cdot (\nabla u) - \nabla\cdot (\textbf{q}  u)+  u\Big(m-u-v\Big), \quad x\in \Omega, t>0\\
		v_t &= \nabla \cdot \Big( d_2 |\nabla v|^{p-2}  \nabla v \Big)  - \nabla\cdot (\textbf{q}  v) + v\Big(m-u-v\Big), \quad x\in\Omega,t>0, \\
(d_{1}\nabla u - \textbf{q} u)  \cdot \eta &= (d_2   |\nabla v|^{p-2}  \nabla v - \textbf{q} v)  \cdot \eta =0, \quad x\in \partial \Omega_{1} \\
 \nabla u \cdot \eta &= |\nabla v|^{p-2} \nabla v \cdot \eta=0, \quad x\in \partial \Omega_{2} \\
		u(x,0)&=u_0(x), v(x,0)=v_{0} (x), \quad x\in \Omega .
\end{array}\right.
\end{equation}

\begin{definition}
\label{def:d11}
Let $1<p<2$, let $T \in (0,\infty]$ and $\Omega \subset \mathbb{R}^{n}, n=1,2$ be a bounded domain, with smooth boundary. A pair $(u,v)$ of non-negative functions defined on $\Omega \times (0,T)$ is called a weak solution to \eqref{eq:pde_modeld211} on $[0,T]$ if,

(i) $u \in L^{2}_{loc}([0,T];W^{1,2}(\Omega)), v \in L^{2}_{loc}([0,T];L^{2}(\Omega))$

(ii) $|\nabla v|^{p-2} \nabla v \in L^{1}_{loc}([0,T];L^{1}(\Omega));$

(iii) For every $\phi \in C^{\infty}_{0}(\Omega \times [0,T))$ we have,

\begin{eqnarray}
&& \int^{T}_{0}\int_{\Omega} v \phi_{t} dxdt - \int_{\Omega} v_{0}(x) \phi(x,0)dx \nonumber \\
&& = 
- \int^{T}_{0}\int_{\Omega} |\nabla v|^{p-2} \nabla v \cdot \nabla \phi dx dt + \int^{T}_{0}\int_{\Omega}\textbf{q} v \nabla \phi dx dt + \int^{T}_{0}\int_{\Omega} v(m-u-v) \phi dxdt . \nonumber \\
\end{eqnarray}
\end{definition}

\subsection{The regularized system and preliminary estimates}
There can be several issues with \eqref{eq:pde_modeld211}, and well posedness has to be dealt with. Herein, the analysis is more involved than with Finite Time Extinction (FTE) through a semi-linearity, as in \cite{parshad2021some}. We follow many of the techniques enunciated from \cite{li2020global, rani2024global, cong2016degenerate, dibenedetto2012degenerate}, in the ensuing analysis. To this end, we first posit the regularized system, considering again the special case that is when $k=1$,

\begin{equation}
\label{eq:pde_modeld2}
\left\{ \begin{array}{ll}
u^{\epsilon}_t &= d_1 \nabla \cdot (\nabla u^{\epsilon}) - \nabla\cdot (\textbf{q}  u^{\epsilon}) +  u^{\epsilon}\Big(m-u^{\epsilon}-v^{\epsilon}\Big), \quad x\in \Omega, t>0\\
		v^{\epsilon}_t &= \nabla \cdot a(\epsilon,\nabla v^{\epsilon})  - \nabla\cdot (\textbf{q}  v^{\epsilon}) + v^{\epsilon}\Big(m-u^{\epsilon}-v^{\epsilon}\Big), \quad x\in\Omega,t>0, \\
	 a(\epsilon,\nabla v^{\epsilon}) & = \Big(  d_2   \left(|\nabla v^{\epsilon}|^{2}+{\epsilon}\right)^{\frac{p-2}{2}}  \nabla v^{\epsilon} \Big) , \ p \in (1,2], \ 0 \leq k \leq 1 \\
 (d_{1}\nabla u^{\epsilon} - \textbf{q} u^{\epsilon})  \cdot \eta &= (d_2  \left(|\nabla v^{\epsilon}|^{2}+{\epsilon}\right)^{\frac{p-2}{2}}  \nabla v^{\epsilon} - \textbf{q} v^{\epsilon})  \cdot \eta =0, \quad x\in \partial \Omega_{1} \\
 \nabla u^{\epsilon} \cdot \eta &= \left(|\nabla v^{\epsilon}|^{2}+{\epsilon}\right)^{\frac{p-2}{2}}\nabla v^{\epsilon} \cdot \eta=0, \quad x\in \partial \Omega_{2} \\
		u^{\epsilon}(x,0)&=u^{\epsilon}_0(x), v^{\epsilon}(x,0)=v^{\epsilon}_{0} (x), \quad x\in \Omega . \\
\end{array}\right.
\end{equation}


\noindent We state the following theorem, about the regularized system,

\begin{theorem}
\label{thm:mt1-1}
    Consider \eqref{eq:pde_modeld2}. Let $2>p>\frac{3}{2}, q,m>0$. If the initial data $u_{0}(x) \in W^{1,\infty}(\Omega), v_{0}(x) \in L^{\infty}$, then for any $(d_{1},d_{2})$ and $q \geq 1$, there exists a $C=C(q)$ s.t. for any $ \epsilon > 0$, we have,
\begin{equation}
    ||v^{\epsilon}||_{L^{q}(\Omega)} \leq C,
\end{equation}
  for all $t \in [0, T_{max, \epsilon})$.
\end{theorem}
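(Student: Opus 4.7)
The plan is to derive an a priori differential inequality for $y_q(t) := \int_\Omega (v^\epsilon)^q\, dx$ by testing the $v^\epsilon$-equation with $q(v^\epsilon)^{q-1}$. For fixed $\epsilon > 0$ the regularized diffusion coefficient $(|\nabla v^\epsilon|^2 + \epsilon)^{(p-2)/2}$ is smooth and bounded from above, so classical parabolic theory provides smooth local-in-time solutions $(u^\epsilon, v^\epsilon)$ on $[0, T_{\max,\epsilon})$; a comparison argument against the subsolution $0$ yields $u^\epsilon, v^\epsilon \ge 0$, which legitimizes the test function.

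After multiplying the $v^\epsilon$-equation by $q(v^\epsilon)^{q-1}$ and integrating over $\Omega$, the diffusion term splits, via integration by parts, into the nonpositive bulk contribution
\[
-q(q-1)d_2 \int_\Omega (v^\epsilon)^{q-2}|\nabla v^\epsilon|^2\bigl(|\nabla v^\epsilon|^2+\epsilon\bigr)^{(p-2)/2}\, dx \le 0
\]
and a boundary contribution $q\int_{\partial\Omega_1} (v^\epsilon)^q\, \textbf{q}\cdot\eta\, dS$, where the Danckwerts condition $a(\epsilon, \nabla v^\epsilon)\cdot\eta = \textbf{q} v^\epsilon\cdot\eta$ on $\partial\Omega_1$ and the no-flux condition on $\partial\Omega_2$ have been used. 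Integration by parts on the drift term (assuming $\nabla\cdot \textbf{q} = 0$) produces $-\int_{\partial\Omega}(v^\epsilon)^q\,\textbf{q}\cdot\eta\, dS$. Summing, the total boundary contribution equals
\[
(q-1)\!\int_{\partial\Omega_1}\!(v^\epsilon)^q \textbf{q}\cdot\eta\, dS \;-\!\int_{\partial\Omega_2}\!(v^\epsilon)^q \textbf{q}\cdot\eta\, dS \le 0,
\]
because $\textbf{q}\cdot\eta \le 0$ on the upstream end $\partial\Omega_1$ and $\textbf{q}\cdot\eta \ge 0$ on the downstream end $\partial\Omega_2$.

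Discarding the nonpositive diffusion and boundary contributions and using $u^\epsilon \ge 0$, the reaction term gives $y_q'(t) \le qm\int_\Omega (v^\epsilon)^q\, dx - q\int_\Omega (v^\epsilon)^{q+1}\, dx$. Young's inequality $qm\,z^q \le \tfrac{q}{q+1}z^{q+1} + \tfrac{1}{q+1}(qm)^{q+1}$ absorbs the linear growth into the $v^{q+1}$ dissipation, and Hölder's inequality $\int_\Omega z^{q+1} \ge |\Omega|^{-1/q}\bigl(\int_\Omega z^q\bigr)^{(q+1)/q}$ converts the result into the autonomous ODI
\[
y_q'(t) \le -A\, y_q(t)^{(q+1)/q} + B,
\]
with $A,B>0$ depending only on $q,m,|\Omega|$. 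Standard ODE comparison yields $y_q(t) \le \max\bigl\{y_q(0),\,(B/A)^{q/(q+1)}\bigr\}$; since $y_q(0) \le \|v_0\|_{L^\infty}^q|\Omega|$ and $A,B$ are independent of $\epsilon$, this produces the claimed $\epsilon$-uniform bound on $\|v^\epsilon\|_{L^q}$.

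The main obstacle is arranging all boundary terms to be nonpositive; this is precisely the role of the Danckwerts flux condition, combined with the geometric fact that the outward normal opposes $\textbf{q}$ upstream and aligns with $\textbf{q}$ downstream. A secondary technical point is justifying the test function $(v^\epsilon)^{q-1}$ when $1 < q < 2$, since $(v^\epsilon)^{q-2}$ may be singular at zeros of $v^\epsilon$; this is resolved either by invoking the strong maximum principle for the regularized equation (so $v^\epsilon > 0$ whenever $v_0^\epsilon > 0$), or by replacing the test function with $(v^\epsilon+\delta)^{q-1}$ and letting $\delta\to 0^+$, since the singular bulk term is ultimately discarded. If $\nabla\cdot \textbf{q}\neq 0$ one picks up an extra term controlled by $\|\nabla\cdot\textbf{q}\|_\infty\, y_q$, which is readily absorbed via Gronwall's lemma without changing the conclusion.
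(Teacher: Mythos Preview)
Your proof is correct and in fact more elementary than the paper's. Both arguments test the $v^\epsilon$-equation with $q(v^\epsilon)^{q-1}$ and exploit the Danckwerts condition on $\partial\Omega_1$, but they diverge in how they treat the bulk drift contribution $q(q-1)\int_\Omega(\textbf{q}\cdot\nabla v^\epsilon)(v^\epsilon)^{q-1}\,dx$. The paper estimates this against the retained $p$-Laplacian gradient term via Young's inequality, which produces a residual $\int_\Omega(v^\epsilon)^{q^{**}}\,dx$ with $q^{**}=q+\tfrac{2-p}{p-1}$; the absorption condition $q^{**}<q+1$ needed to close the estimate is precisely the hypothesis $p>\tfrac32$. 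You instead use that $\textbf{q}$ is constant (divergence-free), so $q(v^\epsilon)^{q-1}\,\textbf{q}\cdot\nabla v^\epsilon=\nabla\cdot\bigl(\textbf{q}(v^\epsilon)^q\bigr)$ and the divergence theorem converts the drift entirely into boundary integrals whose signs are favorable by the upstream/downstream convention. Your route is shorter and, notably, never invokes $p>\tfrac32$: the uniform $L^q$ bound follows from your argument for all $1<p<2$. The paper's route has the incidental benefit of directly producing the time-integrated gradient estimate $\int_0^T\!\int_\Omega(v^\epsilon)^{q-2}|\nabla v^\epsilon|^p\,dx\,dt\le C$ used later in Lemma~\ref{lem:ge11n}, but you could recover that just as easily by retaining rather than discarding the nonpositive diffusion bulk term in your inequality.
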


\begin{proof}
We multiply the equation for $v^{\epsilon}$ in \eqref{eq:pde_modeld2} by $q(v^{\epsilon})^{q-1}$, to obtain,


\begin{eqnarray}
&& \frac{d}{dt}||v^{\epsilon}||^{q}_{q} + d_{2}  q(q-1)\int_{\Omega} \left(
|\nabla v^{\epsilon}|^{2}+{\epsilon}\right)^{\frac{p-2}{2}}|\nabla v^{\epsilon}|^{2} (v^{\epsilon})^{q-2} dx \nonumber \\
&& -\int_{\partial \Omega_{1}} d_{2}\left(
|\nabla v^{\epsilon}|^{2}+{\epsilon}\right)^{\frac{p-2}{2}}( \nabla v^{\epsilon} \cdot \eta ) ((qv^{\epsilon})^{q-1}) dS \nonumber \\
&& -\int_{\partial \Omega_{2}} d_{2}\left(
|\nabla v^{\epsilon}|^{2}+{\epsilon}\right)^{\frac{p-2}{2}}( \nabla v^{\epsilon} \cdot \eta ) ((qv^{\epsilon})^{q-1}) dS \nonumber \\
&&+||v^{\epsilon}||^{1+q}_{1+q} 
 + q\int_{\Omega}  u^{\epsilon}(v^{\epsilon})^{q+1}dx \nonumber \\
 && =mq\int_{\Omega} (v^{\epsilon})^{q}dx + q(q-1)\int_{\Omega} (\textbf{q}\cdot \nabla v^{\epsilon}) (v^{\epsilon})^{q-1}dx \nonumber \\
 && - \int_{\partial \Omega_{1}} (\textbf{q} v^{\epsilon}) \cdot \eta) ((qv^{\epsilon})^{q-1}) dS 
 - \int_{\partial \Omega_{2}} (\textbf{q} v^{\epsilon}) \cdot \eta) ((qv^{\epsilon})^{q-1}) dS 
 \nonumber \\
\end{eqnarray}

However, we can combine the boundary terms so that,

\begin{eqnarray}
 &&   -\int_{\partial \Omega_{1}} \left(
|\nabla v^{\epsilon}|^{2}+{\epsilon}\right)^{\frac{p-2}{2}}( \nabla v^{\epsilon} \cdot \eta ) ((qv^{\epsilon})^{q-1}) dS
+ \int_{\partial \Omega_{1}} (\textbf{q} v^{\epsilon}) \cdot \eta) ((qv^{\epsilon})^{q-1}) dS \nonumber \\
&& = - \int_{\partial \Omega_{1}} ((qv^{\epsilon})^{q-1})  [(d_2  \left(|\nabla v^{\epsilon}|^{2}+{\epsilon}\right)^{\frac{p-2}{2}}  \nabla v^{\epsilon} - \textbf{q} v^{\epsilon})  \cdot \eta] dS \nonumber \\
&& = \int_{\partial \Omega_{1}} ((qv^{\epsilon})^{q-1}) (0)dS = 0, \nonumber \\
\end{eqnarray}
which follows from the Danckwerts boundary condition, so no boundary terms survive over $\partial \Omega_{1}$. Over $\partial \Omega_{2}$ we use the prescribed boundary conditions to yield,

\begin{eqnarray}
&& \frac{d}{dt}||v^{\epsilon}||^{q}_{q} + 
\int_{\partial \Omega_{2}} d_{2}\left(
|\nabla v^{\epsilon}|^{2}+{\epsilon}\right)^{\frac{p-2}{2}}( \nabla v^{\epsilon} \cdot \eta ) ((qv^{\epsilon})^{q-1}) dS \nonumber \\
&&+||v^{\epsilon}||^{1+q}_{1+q} 
 + q\int_{\Omega}  u^{\epsilon}(v^{\epsilon})^{q+1}dx \nonumber \\
 && =mq\int_{\Omega} (v^{\epsilon})^{q}dx + q(q-1)\int_{\Omega} (\textbf{q}\cdot \nabla v^{\epsilon}) (v^{\epsilon})^{q-1}dx \nonumber \\
 &&  
 - \int_{\partial \Omega_{2}} (\textbf{q}  \cdot \eta) (q(v^{\epsilon})^{q}) dS .
 \nonumber \\
\end{eqnarray}
Now note when $1<p<2$, we have,
\begin{equation}
\label{eq:in1}
\int_{\Omega} \left(
|\nabla v^{\epsilon}|^{2}+{\epsilon}\right)^{\frac{p-2}{2}}|\nabla v^{\epsilon}|^{2} (v^{\epsilon})^{q-2} dx
\geq 
\int_{\Omega} \left(
|\nabla v^{\epsilon}|\right)^{p}(v^{\epsilon})^{q-2} dx - \epsilon^{\frac{p}{2}}\int_{\Omega} (v^{\epsilon})^{q-2} dx .
\end{equation}
Also note,
\begin{equation}
\label{eq:in2}
\int_{\Omega} (v^{\epsilon})^{q-2}|\nabla v^{\epsilon}|^{p}dx = \frac{p^{p}}{(p+q-2)^{p}}||\nabla (v^{\epsilon})^{\frac{p+q-2}{p}}||^{p}_{p}.
\end{equation}
Using the above in the $\int_{\Omega} (\textbf{q}\cdot \nabla v^{\epsilon}) (v^{\epsilon})^{q-1}dx$ term, by setting $p=1$, and $q=q^{'}+1$, where $q^{'}$, is a dummy variable, yields, 

\begin{equation}
\int_{\Omega} (v^{\epsilon})^{q-1}|\nabla v^{\epsilon}|^{1}dx = \frac{1}{q}||\nabla (v^{\epsilon})^{q}||^{1}_{1}.
\end{equation}
Using positivity and the earlier estimates via \eqref{eq:in1}-\eqref{eq:in2} entails,


\begin{eqnarray}
&& \frac{d}{dt}||v^{\epsilon}||^{q}_{q}  + d_{2}  q(q-1)[\int_{\Omega} \left(
|\nabla v^{\epsilon}|\right)^{p}(v^{\epsilon})^{q-2} dx - \epsilon^{\frac{p}{2}}\int_{\Omega} (v^{\epsilon})^{q-2} dx]
+q||v^{\epsilon}||^{1+q}_{1+q} \nonumber \\
&& + q\int_{\Omega}  u^{\epsilon}(v^{\epsilon})^{q+1}dx 
+\int_{\partial \Omega_{2}} (\textbf{q}  \cdot \eta) (q(v^{\epsilon})^{q}) dS 
 \nonumber \\
&& 
\leq mq\int_{\Omega} (v^{\epsilon})^{q}dx + q(q-1)\int_{\Omega} (\textbf{q}\cdot \nabla v^{\epsilon}) (v^{\epsilon})^{q-1}dx.\nonumber \\
\end{eqnarray}
Again, using positivity we have,

\begin{eqnarray}
\label{eq:in3}
&& \frac{d}{dt}||v^{\epsilon}||^{q}_{q}  + d_{2}  q(q-1)[\int_{\Omega} \left(
|\nabla v^{\epsilon}|\right)^{p}(v^{\epsilon})^{q-2} dx ]
+q||v^{\epsilon}||^{1+q}_{1+q} \nonumber \\
&& + q\int_{\Omega}  u^{\epsilon}(v^{\epsilon})^{q+1}dx 
 \nonumber \\
&& 
\leq mq\int_{\Omega} (v^{\epsilon})^{q}dx + q(q-1)\int_{\Omega} (\textbf{q}\cdot \nabla v^{\epsilon}) (v^{\epsilon})^{q-1}dx + d_{2}  q(q-1) \epsilon^{\frac{p}{2}}\int_{\Omega} (v^{\epsilon})^{q-2} dx.
\nonumber \\
\end{eqnarray}
Notice,
\begin{equation}
q(q-1)\int_{\Omega} (\textbf{q}\cdot \nabla v^{\epsilon}) (v^{\epsilon})^{q-1}dx = q(q-1)\int_{\Omega} (\textbf{q}\cdot \nabla v^{\epsilon}) (v^{\epsilon})^{\frac{q-2}{p}} (v^{\epsilon})^{q^{*}} ,
\end{equation}
where 
\begin{equation}
    q^{*} = \left(q-1 - \frac{q-2}{p}\right)= \frac{p(q-1) -(q-2)}{p}.
\end{equation}
*
Now, using Holder-Young with $\epsilon_{1}$, and exponents $p, q=\frac{p}{p-1}$, we have,

\begin{equation}
\int_{\Omega} (v^{\epsilon})^{q-1}|\nabla v^{\epsilon}|^{1}dx
\leq \frac{\epsilon_{1}}{p} d_{2} q(q-1)\int_{\Omega} (v^{\epsilon})^{q-2}|\nabla v^{\epsilon}|^{p}dx + \frac{p-1}{p}f(\epsilon_{1})\int_{\Omega} (v^{\epsilon})^{q^{**}}dx.
\end{equation}
Here, $q^{**} = q^{*}\left(\frac{p}{p-1}\right)$, so,

\begin{eqnarray}
   && q^{**} \nonumber \\
   && = \left(q-1 - \frac{q-2}{p}\right)\left( \frac{p}{p-1}\right)  \nonumber \\
   && = \frac{p(q-1) -(q-2)}{p-1} = \frac{(p-1)q + (2-p)}{p-1}  \nonumber \\
   && = q + \left(\frac{2-p}{p-1}\right).  \nonumber \\
\end{eqnarray}
Choosing $\epsilon_{1}$, such that $\epsilon_{1} < p $, and inserting this into \eqref{eq:in3} we have,

\begin{equation}
\label{eq:in44}
 \frac{d}{dt}||v^{\epsilon}||^{q}_{q} 
+q||v^{\epsilon}||^{1+q}_{1+q} 
 \leq mq\int_{\Omega} (v^{\epsilon})^{q}dx +  f(\epsilon_{1})\int_{\Omega} (v^{\epsilon})^{q^{**}}dx + d_{2}  q(q-1) \epsilon^{\frac{p}{2}}\int_{\Omega} (v^{\epsilon})^{q-2} dx.
\end{equation}
Now, if 
\begin{equation}
    q^{**} = q + \left(\frac{2-p}{p-1}\right) < q+1, 
\end{equation} 
which is true if $p > \frac{3}{2}$,  then using the embedding of $L^{q+1}(\Omega) \hookrightarrow L^{q^{**}}(\Omega) \hookrightarrow L^{q}(\Omega) \hookrightarrow L^{q-2}(\Omega)$, and a further use of Holder-Young with, say, $\epsilon_{2}$, choosing $\epsilon_{2} f(\epsilon_{1}) < q$ entails,

\begin{equation}
 \frac{d}{dt}||v^{\epsilon}||^{q}_{q} \leq  C_{1} + C_{2}||v^{\epsilon}||^{q}_{q}
-C_{3}\left(||v^{\epsilon}||^{q}_{q}\right)^{\frac{q+1}{q}}.
\end{equation}
The $L^{q}(\Omega)$ bound on $v^{\epsilon}$, follows, via comparison to the ODE, $y^{'}=C_{1} + C_{2}y - C_{3}y^{1+\frac{1}{q}}$, for $C_{1}, C_{2}, C_{3}, q > 0$.
This proves the theorem.

\end{proof}
\begin{remark}
    Note,  $\boxed{q^{**}=q + \left(\frac{2-p}{p-1}\right)}$, as $p \searrow 1$, $q^{**} >>1$, as large as one wants. Thus, we are not able to uniformly control
    $||v^{\epsilon}||^{q}_{q}$ as the $\int_{\Omega}(v^{\epsilon})^{q^{**}}$ term cannot be absorbed by the 
    $\int_{\Omega}(v^{\epsilon})^{q+1}$ term in \eqref{eq:in44}. This is only possible when $p > \frac{3}{2}$, because in this case, $q^{**} < q+1$.
\end{remark}

\subsection{Alikakos-Moser iteration}

The following lemma can be stated,

\begin{lemma}
\label{lem:ma1}
Consider \eqref{eq:pde_modeld2}. Let $2>p>\frac{3}{2}, d_{1}, d_{2}, q,m>0$, and $\Omega \subset \mathbb{R}^{n}, n=1,2$, s.t. $|\Omega| < \infty$, with smooth boundary. If the initial data $u_{0}(x) \in W^{1,\infty}(\Omega), v_{0}(x) \in L^{\infty}(\Omega)$, then for any $1>>\epsilon > 0$, we have,

\begin{equation}
    ||u^{\epsilon}||_{W^{1,\infty}(\Omega)} \leq C, \ ||v^{\epsilon}||_{L^{\infty}(\Omega)} \leq C,
\end{equation}
for all $t \in [0, T_{max, \epsilon})$  .
\end{lemma}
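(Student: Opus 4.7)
The plan is to upgrade the uniform $L^q$ bound of Theorem \ref{thm:mt1-1} to an $L^\infty$ bound on $v^\epsilon$ via a Moser-type iteration tailored to the $p$-Laplacian, and then to use this bound in the semilinear equation for $u^\epsilon$ to obtain the $W^{1,\infty}$ estimate.

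For $v^\epsilon$, I will retest the second equation of \eqref{eq:pde_modeld2} with $(v^\epsilon)^{q_k-1}$ along the exponent sequence $q_k = 2^k q_0$, choosing $q_0$ large enough that the condition $q^{**} < q+1$ (which, per the remark above, holds exactly when $p>3/2$) is satisfied at every level. Reusing the boundary bookkeeping and Holder-Young splitting carried out in the proof of Theorem \ref{thm:mt1-1}, and recasting the coercive term via \eqref{eq:in2} as an $L^p$-gradient norm of $(v^\epsilon)^{(p+q_k-2)/p}$, I expect to arrive at a differential inequality of the form
$$\frac{d}{dt}||v^\epsilon||^{q_k}_{q_k} + c\,(Aq_k)^{-\alpha}\,||\nabla (v^\epsilon)^{(p+q_k-2)/p}||^{p}_{p} \leq C(Aq_k)^\beta\bigl(1+||v^\epsilon||^{q_k}_{q_k}\bigr),$$
with $A,\alpha,\beta$ absolute. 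Applying Gagliardo-Nirenberg to $(v^\epsilon)^{(p+q_k-2)/p}$, interpolating the Sobolev target $L^{p^\star}$ (with $p^\star = np/(n-p) > p$ for $n=1,2$ and $p\in(3/2,2)$) against a lower-order norm controlled by the previous step, then yields a recursion of the shape
$$M_{k+1} \leq (Aq_k)^{\gamma/q_k}\bigl(1 + M_k\bigr)^{\delta_k}, \qquad M_k := ||v^\epsilon||_{L^{q_k}(\Omega)},$$
with $\delta_k \to 1$. Iterating and taking $k \to \infty$ produces $||v^\epsilon||_{L^\infty(\Omega)} \leq C$, uniformly in $\epsilon$ and in $t\in[0,T_{\max,\epsilon})$.

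Once $v^\epsilon$ is $L^\infty$-bounded, the $u^\epsilon$ equation is a uniformly parabolic semilinear problem with smooth drift and reaction $u^\epsilon(m - u^\epsilon - v^\epsilon)$ dominated above by $(m+1)u^\epsilon$ and below by $-(||v^\epsilon||_\infty + u^\epsilon)u^\epsilon$. A direct sub/supersolution comparison with solutions of $y' = (m+1)y$ and $y' = -(||v^\epsilon||_\infty + y)y$ gives $||u^\epsilon||_{L^\infty(\Omega)} \leq C$. The forcing is then in $L^\infty_{t,x}$, the Danckwerts condition on $\partial\Omega_1$ and the Neumann condition on $\partial\Omega_2$ are oblique of smooth type, and standard parabolic $L^p$-theory followed by Schauder (or De Giorgi-Nash-Moser) regularity up to the boundary yield $u^\epsilon \in C^{1,\alpha}(\overline{\Omega})$ uniformly in $\epsilon$; the $W^{1,\infty}(\Omega)$ bound follows at once.

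The hard part is the Moser step: the $p$-Laplacian produces an $L^p$-gradient control rather than an $L^2$-gradient control, so the only Sobolev exponent available for the recursion is $p^\star$, and one must verify that the resulting ratios $q_{k+1}/q_k$ and the geometric factors $(Aq_k)^{\gamma/q_k}$ are compatible for a closed iteration. Crucially, the $\epsilon$-dependent remainder $\epsilon^{p/2}\int_\Omega (v^\epsilon)^{q-2}\,dx$ together with the drift contributions (both the volume integral and the surviving boundary term on $\partial\Omega_2$) appearing in \eqref{eq:in3} must all be absorbed at every level $k$ uniformly in $k$ and $\epsilon$; the constraint $p > 3/2$ inherited from the remark is precisely what makes this absorption possible.
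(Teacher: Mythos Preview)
Your proposal is correct and follows essentially the same Alikakos--Moser iteration as the paper: test against $(v^\epsilon)^{q_k-1}$ along $q_k=2^k$, recast the coercive term via \eqref{eq:in2}, apply Gagliardo--Nirenberg to $(v^\epsilon)^{(p+q_k-2)/p}$ to obtain a recursion of the form $M_k \le C(M_{k-1})^2 + C$ (the paper's version of your $M_{k+1}\le (Aq_k)^{\gamma/q_k}(1+M_k)^{\delta_k}$), and then use standard parabolic $L^q$/Schauder theory for $u^\epsilon$. One small remark: the condition $q^{**}=q+\tfrac{2-p}{p-1}<q+1$ is equivalent to $p>3/2$ and does not depend on $q$, so no special choice of $q_0$ is actually required.
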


\begin{proof}

We begin with the estimates for the $v^{\epsilon}$ component.
Here we multiply the $v^{\epsilon}$ equation in the case $k=1$, by $(v^{\epsilon})^{q_{k} - 1}$, and integrate by parts to yield,

\begin{eqnarray}
&& \frac{d}{dt}||v^{\epsilon}||^{q_{k}}_{q_{k}}  
 + d_{2} k q_{k}(q_{k}-1)\int_{\Omega} \left(
|\nabla v^{\epsilon}|^{2}+{\epsilon}\right)^{\frac{p-2}{2}}|\nabla v^{\epsilon}|^{2} (v^{\epsilon})^{q_{k}-2} dx
+||v^{\epsilon}||^{1+q_{k}}_{1+q_{k}} \nonumber \\
&& + q_{k}\int_{\Omega}  u^{\epsilon}(v^{\epsilon})^{q_{k}+1}dx =m q_{k}\int_{\Omega} (v^{\epsilon})^{q_{k}}dx + q_{k}(q_{k}-1)\int_{\Omega} (\textbf{q}\cdot \nabla v^{\epsilon}) (v^{\epsilon})^{q_{k}-1}dx.\nonumber \\
\end{eqnarray}
Using the estimates as in Theorem \ref{thm:mt1-1}, yields,

\begin{eqnarray}
&& \frac{d}{dt}||v^{\epsilon}||^{q_{k}}_{q_{k}}  
 + d_{2} k q_{k}(q_{k}-1)\int_{\Omega} 
\lvert \nabla \left(v^{\epsilon}\right)^{\frac{p + q_{k} - 2}{p}} \rvert ^{p}dx \nonumber \\
&&+||v^{\epsilon}||^{1+q_{k}}_{1+q_{k}}
 + q_{k}\int_{\Omega}  u^{\epsilon}(v^{\epsilon})^{q_{k}+1}dx  \nonumber \\
 && = m q_{k}\int_{\Omega} (v^{\epsilon})^{q_{k}}dx + q_{k}(q_{k}-1)\int_{\Omega} (\textbf{q}\cdot \nabla v^{\epsilon}) (v^{\epsilon})^{q_{k}-1}dx.\nonumber \\
\end{eqnarray}
Now we handle each of the terms in the right-hand side above separately,

\begin{eqnarray}
&& \int_{\Omega} (v^{\epsilon})^{q_{k}}dx  \nonumber \\
&& = ||(v^{\epsilon})^{\frac{p + q_{k} - 2}{p}}||^{\frac{pq_{k}}{p + q_{k} - 2}}_{L^{\frac{pq_{k}}{p + q_{k} - 2}}(\Omega)} \leq ||\nabla ( (v^{\epsilon})^{\frac{p + q_{k} - 2}{p}})||^{\frac{pq_{k}}{p + q_{k} - 2} \cdot b}_{L^{p}(\Omega)}||(v^{\epsilon})^{\frac{p + q_{k} - 2}{p}}||^{\frac{pq_{k}}{p + q_{k} - 2} \cdot (1-b)}_{L^{\frac{p}{p + q_{k} - 2}}(\Omega)} \nonumber \\
&& + C||(v^{\epsilon})^{\frac{p + q_{k} - 2}{p}}||^{\frac{pq_{k}}{p + q_{k} - 2}}_{L^{\frac{pq_{k}}{2(p + q_{k} - 2)}}} \nonumber \\
&& \leq C_{1}\epsilon_{1} ||\nabla ( (v^{\epsilon})^{\frac{p + q_{k} - 2}{p}})||^{p}_{L^{p}(\Omega)} + C_{2}f(\epsilon_{1})\int_{\Omega}|v^{\epsilon}|dx + \left(\int_{\Omega}|(v^{\epsilon})^{\frac{q_{k}}{2}}|dx\right)^{2}. \nonumber \\
\end{eqnarray}
This follows by choosing 
$b=\frac{\frac{p+q_{k}-2}{p} - \frac{p+q_{k}-2}{pq_{k}}}{\frac{p+q_{k}-2}{p} - \frac{1}{p} + \frac{1}{2}}$. Then $b<1$, as long as $p>\frac{4}{3}$. Thus application of Holder-Young, with $\epsilon_{1}$ small enough yields the above.

Also,

\begin{eqnarray}
&&\int_{\Omega} (\textbf{q}\cdot \nabla v^{\epsilon}) (v^{\epsilon})^{q_{k}-1}dx \leq \int_{\Omega} |(\textbf{q}\cdot \nabla v^{\epsilon})| (v^{\epsilon})^{q_{k}-1}dx \nonumber \\
&& \leq 
|\textbf{q}|\int_{\Omega} (v^{\epsilon})^{q_{k}-1}|\nabla v^{\epsilon}|^{1}dx = \frac{1}{q}||\nabla (v^{\epsilon})^{q_{k}}||^{1}_{1}\nonumber \\
&& \leq |\textbf{q}|\frac{\epsilon_{1}}{p} d_{2} q_{k}(q_{k}-1)\int_{\Omega} (v^{\epsilon})^{q_{k}-2}|\nabla v^{\epsilon}|^{p}dx + |\textbf{q}|\frac{p-1}{p}g(\epsilon_{1})\int_{\Omega} (v^{\epsilon})^{q^{**}}dx ,\nonumber \\
\end{eqnarray}
where if $p > \frac{3}{2}$, $q^{**} < 1 + q_{k}$. We next repeat the same estimate as in Theorem \ref{thm:mt1-1} by applying Holder-Young inequality with $\epsilon_{1}$ small enough, to obtain,

\begin{equation}
 \frac{d}{dt}||v^{\epsilon}||^{q_{k}}_{q_{k}} 
+q_{k}||v^{\epsilon}||^{1+q_{k}}_{1+q_{k}} 
 \leq C_{1} +  f(\epsilon_{1})\int_{\Omega} (v^{\epsilon})^{q_{k}+\frac{2-p}{p-1}}dx +  f(\epsilon_{1})C_{2} \left( \int_{\Omega} (v^{\epsilon})^{\frac{q_{k}}{2}}dx\right)^{2} .
\end{equation}
If we choose $p > \frac{3}{2}$, this yields for $q_{k} = 2^{k}$ we have, 
\begin{equation}
 \frac{d}{dt}||v^{\epsilon}||^{q_{k}}_{q_{k}} 
+q_{k}||v^{\epsilon}||^{1+q_{k}}_{1+q_{k}} 
 \leq  ||v^{\epsilon}||^{q_{k}+\delta}_{q_{k}+\delta}  + \left(||v^{\epsilon}||^{\frac{q_{k}}{2}}_{\frac{q_{k}}{2}}\right)^{2} + C_{1}, \ 0<\delta <1.
\end{equation}
Standard embedding of  $L^{q_{k}+1}(\Omega) \hookrightarrow  \hookrightarrow L^{q_{k}+\delta}(\Omega)$, $0<\delta <1$, entails

\begin{equation}
 \frac{d}{dt}||v^{\epsilon}||^{q_{k}}_{q_{k}} 
+C_{1}\left(||v^{\epsilon}||^{q_{k}}_{q_{k}} \right)^{1+\frac{1}{q_{k}}}
 \leq C_{2}\left(||v^{\epsilon}||^{\frac{q_{k}}{2}}_{\frac{q_{k}}{2}}\right)^{2} + C_{3}.
\end{equation}
It follows trivially that,

\begin{equation}
 \frac{d}{dt}||v^{\epsilon}||^{q_{k}}_{q_{k}} 
+C_{1}||v^{\epsilon}||^{q_{k}}_{q_{k}} 
 \leq C_{2}\left(||v^{\epsilon}||^{\frac{q_{k}}{2}}_{\frac{q_{k}}{2}}\right)^{2} + C_{3}.
\end{equation}
Thus, if 

\begin{equation}
M_{K} = \max \left\{||v_{0}||_{\infty}, sup_{t \in (0,T)} \int_{\Omega} |v^{\epsilon}|^{q_{k}} dx \right \} \ \mbox{for fixed} \ T \in (0.T_{max, \epsilon}),
\end{equation}

it is immediate that, 

\begin{equation}
M_{K} \leq C_{1} ||v_{0}||^{q_{k}}_{q_{k}} + C_{2} (M_{K-1})^{2} + C_{3}. 
\end{equation}
Via a recursion and taking the limit as $k \rightarrow \infty$,

\begin{eqnarray}
&&||v^{\epsilon}||_{\infty}  \nonumber \\
&\leq& \limsup_{k \rightarrow \infty} (M_{K})^{\frac{1}{q_{k}}} \nonumber \\
&\leq& 
(C_{1}||v_{0}||^{q_{k}}_{q_{k}} + C_{2} (M_{K-1})^{\frac{K}{K-1}} + C_{3})^{\frac{1}{q_{k}}}  \nonumber \\
&\leq &
||v_{0}||_{\infty} + C_{1}, \nonumber \\
\end{eqnarray}
or recursively, $(M_{K})^{\frac{1}{q_{k}}} \leq \left(C_{1}(M_{0})^{2^{k}}+C_{2}\right)^{\frac{1}{q_{k}}} $.

The $L^{\infty}$ bound on $v^{\epsilon}$ follows. The bound for the $u^{\epsilon}$ component follows via standard estimates, due to the structure of the equation for $u^{\epsilon}$, we can derive a $W^{2,q}(\Omega)$ estimate for the $u^{\epsilon}$ component, and then use the embedding chain, $W^{2,q}(\Omega) \hookrightarrow C^{1}(\Omega) \hookrightarrow W^{1, \infty}(\Omega)$. This proves the lemma.
\end{proof}
\section{Global Existence of Weak Solutions}

In this section, we prove global existence of weak solutions to \eqref{eq:pde_modeld211}. Herein, the strategy is to consider the regularised system \eqref{eq:pde_modeld2}, then derive estimates uniform in the parameter $\epsilon$, and pass to the limit as $\epsilon \rightarrow 0$. We begin by stating the following lemma,

\begin{lemma}
\label{lem:ge11n}
    Consider \eqref{eq:pde_modeld2}. Let $2>p>\frac{3}{2}, d_{1}, d_{2}, q,m>0$. If the initial data $u_{0}(x) \in W^{1,\infty}(\Omega), v_{0}(x) \in L^{\infty}(\Omega)$, then for any $1>>\epsilon > 0$, there exists a constant $C$, depending only on the problem parameters, such that we have,

\begin{equation}
\label{eq:11l}
    ||u^{\epsilon}||_{W^{1,\infty}(\Omega)} \leq C,
\end{equation}

\begin{equation}
\label{eq:12l}
    ||v^{\epsilon}||_{L^{\infty}(\Omega)} \leq C,
\end{equation}

\begin{equation}
\label{eq:13l}
\int^{t}_{0}\int_{\Omega} 
|\nabla v^{\epsilon}(.,s)|^{p}(v^{\epsilon}(.,s))^{q-2} dx ds \leq C
\end{equation}

\begin{equation}
\label{eq:14l}
\int^{t}_{0}\int_{\Omega} 
|\nabla v^{\epsilon}(.,s)|^{p} dx ds \leq C
\end{equation}
\end{lemma}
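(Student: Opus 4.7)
The plan is to obtain \eqref{eq:11l}--\eqref{eq:12l} essentially for free from Lemma \ref{lem:ma1}, then harvest \eqref{eq:13l} from the energy identity that was used but not fully exploited in the proof of Theorem \ref{thm:mt1-1}, and finally deduce \eqref{eq:14l} from \eqref{eq:13l} by a judicious choice of the test-function exponent.

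First, for \eqref{eq:11l} and \eqref{eq:12l}, I would invoke Lemma \ref{lem:ma1} directly. The only thing I must verify is that the constants in the Alikakos--Moser recursion there depend on $\|u_0\|_{W^{1,\infty}}$, $\|v_0\|_{L^\infty}$, $|\Omega|$, $m$, $\mathbf{q}$, $d_1$, $d_2$ and $p$, but not on $\epsilon$. Inspection of that proof shows that the only place $\epsilon$ enters is through a term of the form $\epsilon^{p/2}\int_\Omega (v^\epsilon)^{q-2}dx$, which is dominated for $\epsilon \in (0,1)$ uniformly in $\epsilon$. Hence the uniform bounds carry over.

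For \eqref{eq:13l}, I would return to the energy identity derived in the proof of Theorem \ref{thm:mt1-1}, namely
\begin{equation*}
\frac{d}{dt}\|v^\epsilon\|_q^q + d_2 q(q-1)\int_\Omega |\nabla v^\epsilon|^p (v^\epsilon)^{q-2}\,dx \leq C_1 + C_2\|v^\epsilon\|_q^q,
\end{equation*}
which is obtained after absorbing the drift term through Hölder--Young and the $\epsilon^{p/2}$ remainder is bounded thanks to \eqref{eq:12l}. Integrating from $0$ to $t$ and invoking the $L^q$ bound already established in Theorem \ref{thm:mt1-1} (together with the $L^\infty$ bound \eqref{eq:12l} to dispense with any $q^{**}$ or $q-2$ correction on the right) yields
\begin{equation*}
d_2 q(q-1)\int_0^t\!\!\int_\Omega |\nabla v^\epsilon|^p (v^\epsilon)^{q-2}\,dx\,ds \leq \|v_0\|_q^q + t\bigl(C_1 + C_2 \sup_{s\leq t}\|v^\epsilon(\cdot,s)\|_q^q\bigr),
\end{equation*}
which is \eqref{eq:13l} with an $\epsilon$-independent constant.

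Finally, \eqref{eq:14l} follows from \eqref{eq:13l} by taking $q=2$: then $(v^\epsilon)^{q-2}\equiv 1$ and the weighted estimate becomes exactly the unweighted one. (Alternatively, one may take any $q \in [1,2]$ and use the uniform $L^\infty$ bound \eqref{eq:12l} to absorb $(v^\epsilon)^{q-2}$ after a Young inequality, but the $q=2$ choice is cleanest.) The main obstacle in the entire argument is bookkeeping: one has to check that every constant traced through Theorem \ref{thm:mt1-1} and the Moser iteration of Lemma \ref{lem:ma1} depends only on the problem data, and in particular that the regularization parameter $\epsilon$ appears only through terms of the form $\epsilon^{p/2}$ which vanish as $\epsilon\to 0$ and are trivially bounded for $\epsilon\in(0,1)$. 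Once that bookkeeping is in place, the four assertions drop out in the order above.
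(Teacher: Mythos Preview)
Your proposal is correct and follows essentially the same route as the paper: invoke Lemma \ref{lem:ma1} for \eqref{eq:11l}--\eqref{eq:12l}, integrate the energy inequality from the $L^q$ estimate in time to obtain \eqref{eq:13l}, and then specialize to $q=2$ for \eqref{eq:14l}. Your bookkeeping on the $\epsilon$-independence of the constants is more explicit than the paper's, and your choice $q=2$ for the last step is indeed the intended one (the paper writes ``setting $p=2$'' there, which is evidently a typo for $q=2$).
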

\begin{proof}
    \eqref{eq:11l}-\eqref{eq:12l} follow from the estimates in Lemma \ref{lem:ma1}. Furthermore, we have the following inequality, after applying the estimates in Lemma \ref{lem:ma1},

    \begin{eqnarray}
   && \frac{d}{dt}||v^{\epsilon}||^{q_{k}}_{q_{k}}  
 + C\int_{\Omega} 
\lvert \nabla \left(v^{\epsilon}\right)^{\frac{p + q_{k} - 2}{p}} \rvert ^{p}dx +||v^{\epsilon}||^{1+q_{k}}_{1+q_{k}} \nonumber \\
&& \leq 
    C_{1} +  f(\epsilon_{1})\int_{\Omega} (v^{\epsilon})^{q_{k}+\frac{2-p}{p-1}}dx +  f(\epsilon_{1})C_{2} \left( \int_{\Omega} (v^{\epsilon})^{\frac{q_{k}}{2}}dx\right)^{2} \nonumber \\
    \end{eqnarray}
    Integration in time $[0,T]$ of the inequality above, gives the integral estimate \eqref{eq:13l} we require. Setting $p=2$ therein gives \eqref{eq:14l}. This proves the lemma.
\end{proof}
We state the following theorem,
\begin{theorem}
\label{thm:vubd}
    Consider \eqref{eq:pde_modeld2}. Let $2>p>\frac{3}{2}, d_{1}, d_{2}, q,m>0$. If the initial data $u_{0}(x) \in W^{1,\infty}(\Omega), v_{0}(x) \in L^{\infty}(\Omega)$, then there exists a function $v \in L^{p}_{loc}((0,\infty), W^{1,p}(\Omega)) \cap L^{\infty}((0,\infty);L^{\infty}(\Omega))$ and $u \in  L^{2}_{loc}((0,\infty), W^{1,2}(\Omega)) \cap L^{\infty}((0,\infty);L^{\infty}(\Omega))$  as well as a $\Gamma \in L^{\frac{p}{p-1}}_{loc}((0,\infty);L^{\frac{p}{p-1}}(\Omega)) \cap L^{\infty}((0,\infty);L^{\infty}(\Omega))$, and a sequence of approximants $\epsilon = \epsilon_{j} \searrow 0$, such that,

    \begin{equation}
u^{\epsilon} \overset{*}{\rightharpoonup} u^{*} \ \mbox{in} \ L^{\infty}((0,\infty);L^{\infty}(\Omega))
\end{equation}

\begin{equation}
\nabla u^{\epsilon} \overset{*}{\rightharpoonup} \nabla u^{*}  \ \mbox{in} \ L^{\infty}((0,\infty);L^{\infty}(\Omega))
\end{equation}

\begin{equation}
 u^{\epsilon} \rightharpoonup  u^{*}  \ \mbox{in} \ L^{2}_{loc}((0,\infty);W^{1,2}(\Omega))
\end{equation}

 \begin{equation}
v^{\epsilon} \overset{*}{\rightharpoonup} v^{*}  \ \mbox{in} \ L^{\infty}((0,\infty);L^{\infty}(\Omega))
\end{equation}

 \begin{equation}
 \label{eq:w1pv}
\nabla v^{\epsilon} \rightharpoonup \nabla v^{*}  \ \mbox{in} \ L^{p}_{loc}((0,\infty);L^{p}(\Omega))
\end{equation}

 \begin{equation}
|\nabla v^{\epsilon}|^{p-2}\nabla v^{\epsilon} \rightharpoonup  \Gamma \ \mbox{in} \ L^{\frac{p}{p-1}}_{loc}((0,\infty);L^{\frac{p}{p-1}}(\Omega))
\end{equation}

\end{theorem}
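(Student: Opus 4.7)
The plan is to chain together standard weak compactness arguments on top of the uniform-in-$\epsilon$ estimates supplied by Lemma \ref{lem:ge11n}. Concretely, I would invoke Banach--Alaoglu in the $L^\infty$ setting (using that $L^\infty$ is the dual of the separable space $L^1$) and reflexivity in the various $L^p$ settings, together with a Cantor-diagonal extraction over an increasing exhaustion $T = k \in \mathbb{N}$ to upgrade $L^p((0,T);\cdot)$ bounds into $L^p_{loc}((0,\infty);\cdot)$ convergences. Throughout, identification of a weak limit of $\nabla w^\epsilon$ with the gradient of the weak limit of $w^\epsilon$ is routine: testing against $\phi \in C^\infty_c(\Omega \times (0,\infty))$ and integrating by parts pushes the limit through the derivative in the distributional sense.

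Step by step: first, the bounds $\|u^\epsilon\|_{W^{1,\infty}(\Omega)} \le C$ and $\|v^\epsilon\|_{L^\infty(\Omega)} \le C$ put $u^\epsilon$, $\nabla u^\epsilon$, and $v^\epsilon$ uniformly in $L^\infty((0,\infty);L^\infty(\Omega))$, so weak-$*$ sequential compactness yields the three displayed $\overset{*}{\rightharpoonup}$ statements after extracting a subsequence $\epsilon_j \searrow 0$, and distributional identification fixes the limit of $\nabla u^{\epsilon_j}$ as $\nabla u^*$. Second, the same $W^{1,\infty}$ bound trivially gives a uniform bound in the reflexive space $L^2((0,T);W^{1,2}(\Omega))$ on every finite interval, so a further diagonal extraction produces the weak convergence $u^{\epsilon_j} \rightharpoonup u^*$ in $L^2_{loc}((0,\infty);W^{1,2}(\Omega))$. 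Third, estimate \eqref{eq:14l} gives $\nabla v^\epsilon$ bounded in $L^p((0,T);L^p(\Omega))$ uniformly; since $1<p<2$ implies $L^p$ is reflexive, extraction plus diagonalization produces \eqref{eq:w1pv}.

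For $\Gamma$, the key observation is that $|\,|\nabla v^\epsilon|^{p-2}\nabla v^\epsilon\,|^{p/(p-1)} = |\nabla v^\epsilon|^{p}$, so
\begin{equation*}
\int_0^T\!\!\int_\Omega \bigl||\nabla v^\epsilon|^{p-2}\nabla v^\epsilon\bigr|^{p/(p-1)}\, dx\, ds \;=\; \int_0^T\!\!\int_\Omega |\nabla v^\epsilon|^p\, dx\, ds \;\le\; C
\end{equation*}
by \eqref{eq:14l}. Hence $|\nabla v^\epsilon|^{p-2}\nabla v^\epsilon$ is bounded in the reflexive space $L^{p/(p-1)}((0,T);L^{p/(p-1)}(\Omega))$, and extraction yields the weak limit $\Gamma$ in $L^{p/(p-1)}_{loc}((0,\infty);L^{p/(p-1)}(\Omega))$. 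Note also that the regularized field $(|\nabla v^\epsilon|^2+\epsilon)^{(p-2)/2}\nabla v^\epsilon$ has the same $L^{p/(p-1)}$ bound (its modulus is dominated by $|\nabla v^\epsilon|^{p-1}$), and its weak limit coincides with $\Gamma$, a fact that will be needed later when passing to the limit in the weak formulation.

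The main obstacle is the additional claim $\Gamma \in L^\infty((0,\infty);L^\infty(\Omega))$, which does not follow from the integral bounds above, nor do the same $L^\infty$ claims on $u^*, \nabla u^*, v^*$ simultaneously with the weaker local convergences. The natural route is to combine the $L^\infty$ bounds on $v^\epsilon$ from Lemma \ref{lem:ma1} with a parabolic gradient estimate for the nondegenerate regularization in the spirit of \cite{dibenedetto2012degenerate,li2020global}, yielding $\|\nabla v^\epsilon\|_{L^\infty}$ uniform in $\epsilon$; then $\|\,|\nabla v^\epsilon|^{p-2}\nabla v^\epsilon\,\|_{L^\infty}$ is controlled, and weak-$*$ lower semicontinuity delivers the desired $L^\infty$ membership of $\Gamma$. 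I would emphasize that this theorem does \emph{not} claim $\Gamma = |\nabla v^*|^{p-2}\nabla v^*$: that identification is genuinely harder and would require a Minty-type monotonicity argument together with almost-everywhere convergence of $\nabla v^{\epsilon_j}$, which is postponed.
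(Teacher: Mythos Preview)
Your proposal is correct and follows the same route as the paper: the paper's proof is literally the single sentence ``The proof follows via the uniform bounds in Lemma~\ref{lem:ge11n},'' so your Banach--Alaoglu / reflexivity / diagonal-extraction expansion is exactly the content being invoked. Your flagging of the $\Gamma \in L^{\infty}((0,\infty);L^{\infty}(\Omega))$ claim as not following from the stated integral bounds is apt---the paper does not justify this separately, and Lemma~\ref{lem:ge11n} gives no $L^\infty$ control on $\nabla v^\epsilon$---so your observation that an additional parabolic gradient estimate in the DiBenedetto style would be needed is a genuine refinement over what the paper writes.
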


\begin{proof}
    The proof follows via the uniform bounds in Lemma \ref{lem:ge11n}.
\end{proof}

Next, we state a result about the time derivative of the solution,

\begin{theorem}
\label{thm:w1s}
    Consider \eqref{eq:pde_modeld2}. Let $2>p>\frac{3}{2}, d_{1}, d_{2}, q,m>0$. If the initial data $u_{0}(x) \in W^{1,\infty}(\Omega), v_{0}(x) \in L^{\infty}(\Omega)$, then there exists a constant $C$ such that for any time $T > 0$, we have,

     \begin{equation} 
\left \lVert \frac{\partial v^{\epsilon}}{\partial t}\right \rVert_{L^{1}((0,T); (W^{1,p}(\Omega))^{*})} \leq C
\end{equation}

\end{theorem}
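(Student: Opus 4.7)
The plan is a duality argument: bound $\langle v^{\epsilon}_t, \phi\rangle$ uniformly for test functions $\phi \in W^{1,p}(\Omega)$ with $\|\phi\|_{W^{1,p}(\Omega)} \leq 1$ using the weak form of the equation, then integrate in time and invoke the estimates of Lemma \ref{lem:ge11n}.

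First, I would multiply the equation for $v^{\epsilon}$ in \eqref{eq:pde_modeld2} by an arbitrary test function $\phi \in W^{1,p}(\Omega)$ and integrate by parts. Combining the boundary contributions from the diffusive flux $a(\epsilon,\nabla v^{\epsilon})$ with those from the drift, the Danckwerts boundary condition on $\partial\Omega_{1}$ makes the combined flux-drift boundary term vanish identically. On $\partial\Omega_{2}$ the pure diffusive flux vanishes by the prescribed boundary condition, leaving only the drift contribution $-\int_{\partial\Omega_{2}} (\mathbf{q} v^{\epsilon}\cdot\eta)\phi\,dS$. The resulting identity reads
\begin{equation*}
\langle v^{\epsilon}_t,\phi\rangle = -\int_{\Omega} a(\epsilon,\nabla v^{\epsilon})\cdot\nabla\phi\,dx + \int_{\Omega} \mathbf{q} v^{\epsilon}\cdot\nabla\phi\,dx + \int_{\Omega} v^{\epsilon}(m-u^{\epsilon}-v^{\epsilon})\phi\,dx - \int_{\partial\Omega_{2}}(\mathbf{q} v^{\epsilon}\cdot\eta)\phi\,dS.
\end{equation*}

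Next I would estimate each term on the right. Using $(|\nabla v^{\epsilon}|^{2}+\epsilon)^{\frac{p-2}{2}}|\nabla v^{\epsilon}| \leq |\nabla v^{\epsilon}|^{p-1}$ (since $p<2$) and H\"older's inequality with conjugate exponents $p/(p-1)$ and $p$, the degenerate diffusion term is bounded by $d_{2}\|\nabla v^{\epsilon}\|_{L^{p}(\Omega)}^{p-1}\|\nabla\phi\|_{L^{p}(\Omega)}$. The drift term in the bulk is bounded by $|\mathbf{q}|\|v^{\epsilon}\|_{L^{\infty}(\Omega)}\,|\Omega|^{(p-1)/p}\|\nabla\phi\|_{L^{p}(\Omega)}$, and the reaction term by $C\|v^{\epsilon}\|_{L^{\infty}}(m+\|u^{\epsilon}\|_{L^{\infty}}+\|v^{\epsilon}\|_{L^{\infty}})\|\phi\|_{L^{p/(p-1)}(\Omega)}$, both controlled via the $L^{\infty}$ bounds from Lemma \ref{lem:ge11n}. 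For the boundary integral I would apply the trace inequality $\|\phi\|_{L^{1}(\partial\Omega_{2})} \leq C\|\phi\|_{W^{1,p}(\Omega)}$ together with the $L^{\infty}$ bound on $v^{\epsilon}$. Taking the supremum over $\phi$ with $\|\phi\|_{W^{1,p}(\Omega)}\leq 1$ yields
\begin{equation*}
\left\lVert \frac{\partial v^{\epsilon}}{\partial t}\right\rVert_{(W^{1,p}(\Omega))^{*}} \leq C\bigl(1 + \|\nabla v^{\epsilon}\|_{L^{p}(\Omega)}^{p-1}\bigr),
\end{equation*}
with $C$ independent of $\epsilon$.

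Finally, integrating on $[0,T]$ and applying H\"older's inequality with exponents $p$ and $p/(p-1)$ to the superlinear-in-time piece gives
\begin{equation*}
\int_{0}^{T}\|\nabla v^{\epsilon}\|_{L^{p}(\Omega)}^{p-1}\,dt \leq T^{1/p}\left(\int_{0}^{T}\int_{\Omega}|\nabla v^{\epsilon}|^{p}\,dx\,dt\right)^{(p-1)/p},
\end{equation*}
and the right-hand side is bounded uniformly in $\epsilon$ by the integral estimate \eqref{eq:14l}. This yields the claim.

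The main obstacle is the singular/degenerate structure of $a(\epsilon,\nabla v^{\epsilon})$: we only have an $L^{p}$ bound (in space-time) on $\nabla v^{\epsilon}$, so we cannot hope for a better dual-space norm than $(W^{1,p})^{*}$. Pairing against $W^{1,p}$ is exactly what makes the H\"older exponent $p-1$ appear on $\|\nabla v^{\epsilon}\|_{L^{p}}$, and it is this sublinear growth that allows a further H\"older step in time to absorb it into \eqref{eq:14l}, producing an $L^{1}$-in-time bound rather than the $L^{p/(p-1)}$ bound one might naively expect. Care must also be taken that the boundary terms genuinely cancel via the Danckwerts condition; otherwise an uncontrolled boundary flux would spoil the estimate.
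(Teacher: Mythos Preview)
Your proposal is correct and follows the same duality approach as the paper: pair the equation against a test function in $W^{1,p}(\Omega)$, integrate by parts, bound each piece using the uniform estimates of Lemma~\ref{lem:ge11n}, and take the supremum. The paper's version is terser---it takes $\zeta\in C^{\infty}_{0}(\Omega)$ (so no boundary terms appear) and simply cites Lemma~\ref{lem:ge11n} and Lemma~\ref{lem:pc1} for the final constants---whereas you handle the boundary contributions explicitly via the Danckwerts condition and a trace estimate, use the sharp inequality $(|\nabla v^{\epsilon}|^{2}+\epsilon)^{(p-2)/2}|\nabla v^{\epsilon}|\le |\nabla v^{\epsilon}|^{p-1}$ valid for $p<2$, and make the time-integration step explicit through a H\"older argument against~\eqref{eq:14l}; these refinements are all sound and in fact fill in details the paper leaves implicit.
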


\begin{proof}
We consider a test function $\zeta \in C^{\infty}_{0}(\Omega)$ such that $||\zeta||_{W^{1,p}(\Omega)} \leq 1$. Note that,

 \begin{equation} 
\left \lVert \frac{\partial v^{\epsilon}}{\partial t}\right \rVert_{L^{1}((0,T); (W^{1,p}(\Omega))^{*})} = \int^{T}_{0}\left( \sup_{\zeta \in C^{\infty}_{0}(\Omega),||\zeta||_{W^{1,p}(\Omega)} \leq 1 } \int_{\Omega} \frac{\partial v^{\epsilon}}{\partial t} \zeta dx\right) dt.
\end{equation}
We now consider,

\begin{eqnarray}
    && \int_{\Omega} \frac{\partial v^{\epsilon}}{\partial t} \zeta dx \nonumber \\
    && = \ \int_{\Omega} [\nabla \cdot a(x,v^{\epsilon},\nabla v^{\epsilon})  - \nabla\cdot (\textbf{q}  v^{\epsilon}) + v^{\epsilon}\Big(m-u^{\epsilon}-v^{\epsilon}\Big)] \zeta dx \nonumber \\
    && = \int_{\Omega} [\nabla \cdot \Big(d_2 k  \left(|\nabla v^{\epsilon}|^{2}+{\epsilon}\right)^{\frac{p-2}{2}}  \nabla v^{\epsilon} \Big)  - \nabla\cdot (\textbf{q}  v^{\epsilon}) + v^{\epsilon}\Big(m-u^{\epsilon}-v^{\epsilon}\Big) ]\zeta dx \nonumber \\
    && \leq \int_{\Omega} |\nabla \zeta| |v^{\epsilon}| dx + |m|\int_{\Omega} | \zeta| |v^{\epsilon}| dx + \int_{\Omega} | \zeta| |v^{\epsilon}|^{2} dx + \int_{\Omega} | \zeta| |v^{\epsilon}||u^{\epsilon}| dx  \nonumber \\
    && + \int_{\Omega}\Big(d_2 k  \left(|\nabla v^{\epsilon}|^{2}+{\epsilon}\right)^{\frac{p-2}{2}}  \nabla v^{\epsilon} \Big) \nabla \zeta dx \nonumber \\
    &&  \leq \int_{\Omega} |\nabla \zeta| |v^{\epsilon}| dx + |m|\int_{\Omega} | \zeta| |v^{\epsilon}| dx + \int_{\Omega} | \zeta| |v^{\epsilon}|^{2} dx + \int_{\Omega} | \zeta| |v^{\epsilon}||u^{\epsilon}| dx  \nonumber \\
    &&+ \int_{\Omega}\Big(d_2 k  \left(|\nabla v^{\epsilon}|^{2}+{\epsilon}\right)^{\frac{p-1}{2}}  \Big) \nabla \zeta dx \nonumber \\
    && \leq C_{1} + C_{2} + C_{3} + C_{4} + C_{5}.
\end{eqnarray}
This follows via the uniform estimates in Lemma \ref{lem:ge11n}, Lemma \ref{lem:pc1}, as well as the assumptions on the test function $\zeta$.
\end{proof}

\begin{lemma}
\label{lem:vlp}
Consider \eqref{eq:pde_modeld2}, with $ 1 < p \leq 2$, there exists a subsequence ${\epsilon_{j}}$, such that,
 \begin{equation}
v^{\epsilon_{j}} \rightarrow v^{*} \ \mbox{in} \  L^{p}([0,T];L^{p}(\Omega))
\end{equation}

\end{lemma}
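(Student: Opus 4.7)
The plan is to invoke the Aubin--Lions--Simon compactness lemma with the triple
\[
W^{1,p}(\Omega) \; \hookrightarrow\hookrightarrow \; L^{p}(\Omega) \; \hookrightarrow \; \bigl(W^{1,p}(\Omega)\bigr)^{*},
\]
where the first embedding is compact by the Rellich--Kondrachov theorem (since $\Omega \subset \mathbb{R}^{n}$ is a bounded domain with smooth boundary and $n = 1,2$), and the second embedding is continuous (in fact, we will only need that $L^{p}(\Omega) \hookrightarrow (W^{1,p}(\Omega))^{*}$ continuously, which follows from duality and H\"older).

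First, I would collect the two uniform bounds that are already in hand. From Theorem \ref{thm:vubd}, in particular the weak convergence \eqref{eq:w1pv} together with the uniform $L^{\infty}$ bound in Lemma \ref{lem:ge11n}, the family $\{v^{\epsilon}\}$ is uniformly bounded in $L^{p}(0,T;W^{1,p}(\Omega))$ for every $T>0$. From Theorem \ref{thm:w1s}, the family of time derivatives $\{\partial_{t} v^{\epsilon}\}$ is uniformly bounded in $L^{1}(0,T;(W^{1,p}(\Omega))^{*})$. These are exactly the two hypotheses needed to apply the Simon version of the Aubin--Lions lemma, which tolerates only an $L^{1}$-in-time bound on the time derivative provided the spatial bound is in a reflexive space with compact embedding into the middle space.

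Applying the compactness lemma then yields relative compactness of $\{v^{\epsilon}\}$ in $L^{p}(0,T; L^{p}(\Omega))$. Hence we may extract a subsequence $\{v^{\epsilon_{j}}\}$ converging strongly in $L^{p}(0,T; L^{p}(\Omega))$ to some limit. To identify that limit with the $v^{*}$ produced in Theorem \ref{thm:vubd}, I would pass to a further subsequence (not relabelled) so that $v^{\epsilon_{j}} \to v^{*}$ almost everywhere in $\Omega \times (0,T)$ and invoke uniqueness of weak-$*$ limits in $L^{\infty}((0,\infty);L^{\infty}(\Omega))$ together with dominated convergence (the uniform $L^{\infty}$ bound \eqref{eq:12l} supplies the integrable majorant) to conclude that the strong $L^{p}$ limit must coincide with $v^{*}$.

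The main obstacle, such as it is, lies in the weakness of the time-derivative bound: since $\partial_{t} v^{\epsilon}$ is only controlled in $L^{1}$ in time (and not in some reflexive $L^{r}$, $r>1$), the classical Aubin--Lions statement does not apply verbatim, and one must appeal to Simon's refinement, which covers precisely this degenerate endpoint. Once that compactness result is in place, the remainder of the argument is a routine extraction of subsequences and identification of limits using the uniform bounds already established in Lemmas \ref{lem:ma1} and \ref{lem:ge11n}.
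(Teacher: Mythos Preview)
Your proposal is correct and follows exactly the route the paper takes: bound $v^{\epsilon}$ in $L^{p}(0,T;W^{1,p}(\Omega))$ via Lemma~\ref{lem:ge11n}, bound $\partial_t v^{\epsilon}$ in $L^{1}(0,T;(W^{1,p}(\Omega))^{*})$ via Theorem~\ref{thm:w1s}, and apply Aubin--Lions with the triple $W^{1,p}(\Omega)\hookrightarrow\hookrightarrow L^{p}(\Omega)\hookrightarrow (W^{1,p}(\Omega))^{*}$. Your observation that the $L^{1}$-in-time bound on the derivative forces one to invoke Simon's refinement rather than the classical reflexive-space version is a worthwhile clarification that the paper leaves implicit, and your identification of the limit with $v^{*}$ is likewise more explicit than the paper's treatment.
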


\begin{proof}
    We know that via estimates \eqref{eq:14l} in Lemma \ref{lem:ge11n} that we have,
    \begin{equation}
v^{\epsilon_{j}} \in L^{p}((0,T), W^{1,p}(\Omega)).
\end{equation}
\end{proof}
Furthermore, via Theorem \ref{thm:w1s} we have,

 \begin{equation}
 \frac{\partial v^{\epsilon_{j}}}{\partial t} \in L^{1}((0,T); (W^{1,p}(\Omega))^{*}).
\end{equation}
We now use the classical Aubin-Lion compactness Lemma \cite{robinson2003infinite}, 

 \begin{equation}
W^{1,p}(\Omega) \hookrightarrow \hookrightarrow L^{p}(\Omega) \hookrightarrow \left(W^{1,p}(\Omega)\right)^{*},
 \end{equation}
 to obtain $v^{\epsilon_{j}} \rightarrow v^{*}$ in 
 $L^{p}((0,T), L^{p}(\Omega))$.

We derive, 
\begin{equation}
\label{eq:56e}
\lim_{j \rightarrow \infty}  \int^{T}_{0}\int_{\Omega}|v^{\epsilon_{j}}-v^{*}|dxdt \rightarrow 0 .
 \end{equation}
Note, as $\lim_{j \rightarrow \infty}\epsilon_{j} \rightarrow 0$. This follows via 
\begin{equation}
L^{p}((0,T), L^{p}(\Omega)) \hookrightarrow L^{1}((0,T), L^{1}(\Omega)).
 \end{equation}

\subsection{Convergence of Non-linear term} 

\begin{lemma}
\label{lem:cnt}
Consider \eqref{eq:pde_modeld2} with $1<p<2$. There exists a subsequence ${\epsilon_{j}}$, such that for any $T>0$,

\begin{equation}
|\nabla v^{\epsilon_{j}}|^{p-2}  \nabla v^{\epsilon_{j}} \rightarrow |\nabla v|^{p-2}  \nabla v \ \mbox{in} \ L^{\frac{p}{p-1}}(\Omega \times (0,T)).
 \end{equation}
\end{lemma}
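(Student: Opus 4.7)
The plan is to combine Minty's monotonicity trick with an energy-identity argument to first identify the weak limit $\Gamma$ from Theorem \ref{thm:vubd} with $|\nabla v^*|^{p-2}\nabla v^*$, and then to upgrade this to the stated strong convergence by exploiting the $(p-1)$-H\"older continuity of $\xi\mapsto |\xi|^{p-2}\xi$ available in the range $1<p<2$.

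First I would derive an energy identity by multiplying the $v^{\epsilon_j}$-equation in \eqref{eq:pde_modeld2} by $v^{\epsilon_j}$, integrating on $\Omega\times(0,T)$, and using the Danckwerts condition on $\partial\Omega_1$ together with the no-flux condition on $\partial\Omega_2$ to cancel the boundary contributions, giving
\begin{equation*}
\tfrac12\|v^{\epsilon_j}(T)\|_{L^2}^2 + \int_0^T\!\!\int_\Omega a(\epsilon_j,\nabla v^{\epsilon_j})\cdot\nabla v^{\epsilon_j}\,dx\,dt = \tfrac12\|v_0^{\epsilon_j}\|_{L^2}^2 + R_j,
\end{equation*}
where $R_j$ gathers the interior drift integral and the reaction integral. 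All pieces of $R_j$ converge as $\epsilon_j\to 0$ thanks to Lemma \ref{lem:vlp}, the strong convergence of $u^{\epsilon_j}$, and the uniform $L^\infty$ bounds of Lemma \ref{lem:ma1}. Testing the limiting equation from Theorem \ref{thm:vubd} against $v^*$ yields the analogous identity with $\Gamma\cdot\nabla v^*$ in place of $a(\epsilon_j,\nabla v^{\epsilon_j})\cdot\nabla v^{\epsilon_j}$, and comparison gives
\begin{equation*}
\lim_{j\to\infty}\int_0^T\!\!\int_\Omega a(\epsilon_j,\nabla v^{\epsilon_j})\cdot\nabla v^{\epsilon_j}\,dx\,dt = \int_0^T\!\!\int_\Omega \Gamma\cdot\nabla v^*\,dx\,dt.
\end{equation*}

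Next I would apply Minty's trick. Monotonicity of $\xi\mapsto a(\epsilon,\xi)$ gives, for any smooth vector test field $\Phi$,
\begin{equation*}
\int_0^T\!\!\int_\Omega \bigl(a(\epsilon_j,\nabla v^{\epsilon_j}) - a(\epsilon_j,\Phi)\bigr)\cdot(\nabla v^{\epsilon_j}-\Phi)\,dx\,dt \ge 0.
\end{equation*}
Passing to the limit, using the energy identity above for the diagonal term, the weak convergences from Theorem \ref{thm:vubd}, and dominated convergence on the cross terms (with majorant $|\Phi|^{p-1}$, valid since $(|\Phi|^2+\epsilon_j)^{(p-2)/2}\le |\Phi|^{p-2}$ when $p<2$), yields
\begin{equation*}
\int_0^T\!\!\int_\Omega \bigl(\Gamma - |\Phi|^{p-2}\Phi\bigr)\cdot(\nabla v^*-\Phi)\,dx\,dt \ge 0
\end{equation*}
for every smooth $\Phi$. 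The standard substitution $\Phi = \nabla v^* - \lambda\Psi$ and $\lambda\to 0^\pm$ identifies $\Gamma = |\nabla v^*|^{p-2}\nabla v^*$ almost everywhere.

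For the promotion to strong $L^{p/(p-1)}$ convergence, the elementary inequality $\bigl||\xi|^{p-2}\xi - |\eta|^{p-2}\eta\bigr|\le C_p|\xi-\eta|^{p-1}$, valid for $1<p\le 2$, gives
\begin{equation*}
\int_0^T\!\!\int_\Omega\bigl||\nabla v^{\epsilon_j}|^{p-2}\nabla v^{\epsilon_j} - |\nabla v^*|^{p-2}\nabla v^*\bigr|^{p/(p-1)}dx\,dt \le C\int_0^T\!\!\int_\Omega|\nabla v^{\epsilon_j}-\nabla v^*|^p\,dx\,dt.
\end{equation*}
Strong $L^p$-convergence of the gradients then follows by combining the Minty/energy limit $\int_0^T\!\!\int(a(\epsilon_j,\nabla v^{\epsilon_j}) - |\nabla v^*|^{p-2}\nabla v^*)\cdot(\nabla v^{\epsilon_j}-\nabla v^*)\to 0$ with the structural inequality $(|\xi|^{p-2}\xi - |\eta|^{p-2}\eta)\cdot(\xi-\eta)\ge C_p|\xi-\eta|^2/(|\xi|+|\eta|)^{2-p}$ and H\"older's inequality with exponents $2/p$ and $2/(2-p)$. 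The main obstacle is precisely the degeneracy of this structural inequality on the set where $\nabla v^*$ vanishes: the factor $(|\nabla v^{\epsilon_j}|+|\nabla v^*|)^{2-p}$ can be arbitrarily small, and the H\"older interpolation closes only thanks to the uniform $L^p$ gradient bound \eqref{eq:14l} from Lemma \ref{lem:ge11n}.
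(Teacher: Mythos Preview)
Your identification of the weak limit $\Gamma$ via Minty's trick is essentially the same route as the paper's: the paper also passes to the limit in the weak formulation, proves the inequality
\[
\int_0^T\!\!\int_\Omega(\Gamma - |\nabla\phi|^{p-2}\nabla\phi)\cdot(\nabla v^* - \nabla\phi)\,dx\,dt \ge 0
\]
via monotonicity plus the equation, and then finishes with $\phi = v^* - \lambda\psi$, $\lambda\to 0^\pm$. The only cosmetic difference is how the diagonal term is handled: the paper multiplies the $\epsilon$-equation by $v^* - v^{\epsilon}$ rather than by $v^{\epsilon}$ alone, thereby avoiding the comparison of the two energy identities at the endpoint $t=T$ (your version implicitly needs $\|v^{\epsilon_j}(T)\|_{L^2}\to\|v^*(T)\|_{L^2}$, which requires a word of justification beyond Lemma~\ref{lem:vlp}).

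Where your proposal genuinely goes beyond the paper is the final paragraph. The paper's argument stops at the identification $\Gamma = |\nabla v^*|^{p-2}\nabla v^*$ and never upgrades to the strong $L^{p/(p-1)}$ convergence that the arrow ``$\to$'' in the statement literally asserts. Your route---the $(p-1)$-H\"older continuity of $\xi\mapsto|\xi|^{p-2}\xi$ to reduce to strong $L^p$ convergence of the gradients, then the degenerate monotonicity bound combined with H\"older interpolation with exponents $2/p$ and $2/(2-p)$ against the uniform bound \eqref{eq:14l}---is the standard and correct way to supply this missing piece. One point to tidy up: the Minty/energy limit you actually prove is for $a(\epsilon_j,\cdot)=(|\cdot|^2+\epsilon_j)^{(p-2)/2}(\cdot)$, not for $|\cdot|^{p-2}(\cdot)$, so in the last step you should either invoke the regularized structural inequality directly, or note (by dominated convergence with majorant $|\nabla v^{\epsilon_j}|^{p-1}$) that $a(\epsilon_j,\nabla v^{\epsilon_j}) - |\nabla v^{\epsilon_j}|^{p-2}\nabla v^{\epsilon_j}\to 0$ in $L^{p/(p-1)}$.
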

We will first prove convergences in the regularized system \eqref{eq:pde_modeld2}. We first consider,

\begin{eqnarray}
 && \lim_{j \rightarrow \infty} \int^{T}_{0} \int_{\Omega} \frac{\partial v^{\epsilon_{j}}}{\partial t} \phi dx dt \nonumber \\
 && = - \lim_{j \rightarrow \infty}\int^{T}_{0} \int_{\Omega} \frac{\partial \phi}{\partial t} v^{\epsilon_{j}} dx dt + \int_{\Omega}v^{\epsilon_{j}}_{0}(x)\phi_{0}(x)dx \nonumber \\
 && =\int^{T}_{0} \int_{\Omega} \frac{\partial \phi}{\partial t} v dx dt + \int_{\Omega}v_{0}(x)\phi_{0}(x)dx . \  \nonumber \\
\end{eqnarray}
This follows as $\nabla v^{\epsilon_{j}} \rightharpoonup \nabla v^{*}  \ \mbox{in} \ L^{p}_{loc}((0,\infty);L^{p}(\Omega))$, and $L^{p}((0,T);W^{1,p}(\Omega)) \hookrightarrow L^{1}((0,T);L^{p}(\Omega))$.

The semilinear term convergences follow similarly, we demonstrate with the linear term,

\begin{eqnarray}
 && \lim_{j \rightarrow \infty} \int^{T}_{0} \int_{\Omega}  m v^{\epsilon_{j}} \phi dx dt \nonumber \\
 && =\int^{T}_{0} \int_{\Omega} m v \phi dx dt + \int_{\Omega}m v_{0}(x)\phi_{0}(x)dx . \  \nonumber \\
\end{eqnarray}
This follows again from \eqref{eq:56e} and $L^{p}((0,T);W^{1,p}(\Omega)) \hookrightarrow L^{1}((0,T);L^{p}(\Omega))$.

For convergence of the quadratic term, $\lim_{j \rightarrow \infty} \int^{T}_{0} \int_{\Omega}  m (v^{\epsilon_{j}})^{2} \phi dx dt$, we appeal to Lemma \ref{lem:vlp} and $W^{1,p}(\Omega) \hookrightarrow \hookrightarrow L^{2}(\Omega) $, for $n=1,2$. The convergence of the drift term is handled similarly, via an integration by parts in space. Thus we have,

\begin{eqnarray}
    && - \int^{T}_{0}\int_{\Omega} v \frac{\phi }{\partial t}  dx  - \int_{\Omega}v_{0}(x)\phi_{0}(x)dx \nonumber \\
    && = - \int^{T}_{0}\int_{\Omega} \Gamma \cdot \nabla \phi dx dt  - \int^{T}_{0}\int_{\Omega}\textbf{q}  v \cdot \nabla \phi dx dt + \int^{T}_{0}\int_{\Omega}[v\Big(m-u-v\Big)]\phi dx dt. \nonumber \\
   \end{eqnarray}

Next, we show,

\begin{equation}
    \lim_{j \rightarrow \infty} \int^{T}_{0}\int_{\Omega}  \Gamma^{\epsilon_{j}} \cdot \nabla \phi dx dt = \int^{T}_{0}\int_{\Omega}  \Gamma \cdot \nabla \phi dx dt = 
    \int^{T}_{0}\int_{\Omega} |\nabla v|^{p-2} \nabla v \cdot \nabla \phi dx dt .
\end{equation}
Thus, we will show that 
\begin{equation}
\label{eq:11el}
 \int^{T}_{0}\int_{\Omega} (|\nabla v|^{p-2} \nabla v - \Gamma ) \cdot \nabla \phi dx dt = 0,
\end{equation}
both in $(\geq)$ and $(\leq)$ settings.

\begin{lemma}
Consider a function $\phi \in L^{p}((0,T);W^{1,p}(\Omega))$, then for solutions to \eqref{eq:pde_modeld2} we have,

\begin{equation}
\label{eq:iiel}
     \int^{T}_{0}\int_{\Omega} ( \Gamma - |\nabla \phi|^{p-2} \nabla \phi  ) \cdot (\nabla v - \nabla \phi )dx dt \geq  0.
\end{equation}
    
\end{lemma}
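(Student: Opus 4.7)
The plan is to run a Minty--Browder monotonicity argument. For $1 < p \leq 2$, the field $\xi \mapsto a_{\epsilon}(\xi) := (|\xi|^{2}+\epsilon)^{(p-2)/2}\xi$ is the gradient of the convex function $\xi \mapsto \tfrac{1}{p}(|\xi|^{2}+\epsilon)^{p/2}$ and is therefore monotone, $(a_{\epsilon}(\xi)-a_{\epsilon}(\eta))\cdot(\xi-\eta) \geq 0$ pointwise. Taking $\xi = \nabla v^{\epsilon}$, $\eta = \nabla \phi$, and integrating over $\Omega \times (0,T)$ yields $I_{1}^{\epsilon} - I_{2}^{\epsilon} - I_{3}^{\epsilon} + I_{4}^{\epsilon} \geq 0$, where the four terms are the natural pairings of $a_{\epsilon}(\nabla v^{\epsilon})$ and $a_{\epsilon}(\nabla \phi)$ against $\nabla v^{\epsilon}$ and $\nabla \phi$. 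The target inequality \eqref{eq:iiel} is precisely what this becomes in the limit $\epsilon \searrow 0$ once $a_{\epsilon}(\nabla v^{\epsilon})$ is replaced by its weak limit $\Gamma$.

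Three of the four passages are routine. First, $I_{2}^{\epsilon} \to \int_{0}^{T}\!\!\int_{\Omega} \Gamma \cdot \nabla \phi$ by the weak convergence $a_{\epsilon}(\nabla v^{\epsilon}) \rightharpoonup \Gamma$ in $L^{p/(p-1)}$ from Theorem \ref{thm:vubd} tested against the fixed $\nabla \phi \in L^{p}$. Second, $I_{3}^{\epsilon} \to \int_{0}^{T}\!\!\int_{\Omega} |\nabla \phi|^{p-2}\nabla\phi \cdot \nabla v$ by pairing the strong (dominated-convergence) limit $a_{\epsilon}(\nabla\phi) \to |\nabla\phi|^{p-2}\nabla\phi$ in $L^{p/(p-1)}$ against the weak limit $\nabla v^{\epsilon} \rightharpoonup \nabla v$ in $L^{p}$. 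Third, $I_{4}^{\epsilon} \to \int_{0}^{T}\!\!\int_{\Omega} |\nabla \phi|^{p}$ by dominated convergence. The genuinely difficult term is $I_{1}^{\epsilon}$, a product of two only weakly convergent sequences. The workaround is to rewrite it via the PDE itself: testing the regularized equation for $v^{\epsilon}$ against $v^{\epsilon}$, integrating in space and time, and exploiting the Danckwerts condition on $\partial \Omega_{1}$ to annihilate the combined advective--diffusive flux there (exactly as in Theorem \ref{thm:mt1-1}) produces an identity of the form
\begin{equation*}
I_{1}^{\epsilon} = \tfrac{1}{2}\|v_{0}^{\epsilon}\|_{2}^{2} - \tfrac{1}{2}\|v^{\epsilon}(T)\|_{2}^{2} + \int_{0}^{T}\!\!\int_{\Omega} (v^{\epsilon})^{2}(m - u^{\epsilon} - v^{\epsilon}) + \mathcal{B}^{\epsilon},
\end{equation*}
where $\mathcal{B}^{\epsilon}$ collects the surviving drift-induced boundary contributions on $\partial \Omega_{2}$. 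Each term on the right is amenable to $\epsilon \to 0$: Lemma \ref{lem:vlp} combined with the uniform $L^{\infty}$ bound of Lemma \ref{lem:ge11n} upgrades the strong $L^{p}$ convergence to strong $L^{q}$ convergence for every $q \in [1,\infty)$, which controls the cubic reaction integral and the boundary traces; weak lower semicontinuity of the $L^{2}$ norm at $t = T$ delivers $\liminf \|v^{\epsilon}(T)\|_{2}^{2} \geq \|v(T)\|_{2}^{2}$, which carries the correct sign to give $\limsup I_{1}^{\epsilon} \leq$ the limit of the right-hand side. Finally, using $v$ itself as a test function in the limit weak formulation from Theorem \ref{thm:vubd} identifies this limit with $\int_{0}^{T}\!\!\int_{\Omega} \Gamma \cdot \nabla v$, so $\limsup I_{1}^{\epsilon} \leq \int_{0}^{T}\!\!\int_{\Omega} \Gamma \cdot \nabla v$. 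Assembling the four limits collapses the monotonicity inequality to \eqref{eq:iiel}.

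The principal obstacle lies in the $I_{1}^{\epsilon}$ step, specifically in justifying $v$ as a test function in the limit equation: a solution that only lives in $L^{p}(0,T;W^{1,p}(\Omega))$ is not regular enough in time to test against itself directly, and a Steklov-average or time-mollification argument is typically needed to legitimize the energy identity. Handling the boundary trace terms $\mathcal{B}^{\epsilon}$ in the limit also demands care, relying on trace continuity in $L^{p}(0,T;W^{1,p}(\Omega))$ together with the Aubin--Lions strong convergence of $v^{\epsilon}$ to transfer the limit cleanly into the surface integrals and preserve the correct sign throughout.
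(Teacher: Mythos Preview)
Your argument is a correct Minty--Browder scheme, but you organize it differently from the paper. You expand the monotonicity inequality $(a_{\epsilon}(\nabla v^{\epsilon})-a_{\epsilon}(\nabla\phi))\cdot(\nabla v^{\epsilon}-\nabla\phi)\geq 0$ into four pairings, test the regularized equation against $v^{\epsilon}$ to handle the diagonal term $\int a_{\epsilon}(\nabla v^{\epsilon})\cdot\nabla v^{\epsilon}$ via a $\limsup$ bound, and then invoke the \emph{limit} weak formulation with $v$ as test function to identify that bound with $\int\Gamma\cdot\nabla v$. The paper instead writes
\[
\int_{0}^{T}\!\!\int_{\Omega}(|\nabla v^{\epsilon}|^{p-2}\nabla v^{\epsilon}-|\nabla\phi|^{p-2}\nabla\phi)\cdot(\nabla v-\nabla\phi)
\]
(a quantity that converges directly to the target by weak convergence of $|\nabla v^{\epsilon}|^{p-2}\nabla v^{\epsilon}\rightharpoonup\Gamma$ against the fixed factor $\nabla v-\nabla\phi$), splits $\nabla v-\nabla\phi=(\nabla v-\nabla v^{\epsilon})+(\nabla v^{\epsilon}-\nabla\phi)$ to obtain three pieces $I_{1}+I_{2}+I_{3}$, and tests the regularized equation with $v-v^{\epsilon}$ to show $I_{1}\to 0$ as an equality. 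The paper's decomposition thereby avoids both the lower-semicontinuity step at $t=T$ and the need to test the limit equation with $v$ itself---precisely the obstacle you flag as the most delicate. On the other hand, the paper must justify pairing $\partial_{t}v^{\epsilon}$ with the non-smooth $v$, which is a comparable duality issue; neither proof fully addresses this point. Your route is the textbook Minty trick (cf.\ Lions); the paper's is a close variant that trades one technical burden for another.
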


\begin{proof}
We consider,
\begin{eqnarray}
\label{eq:i1i}
&& \int^{T}_{0}\int_{\Omega} ( |\nabla v^{\epsilon}|^{p-2} \nabla v^{\epsilon}  - |\nabla \phi|^{p-2} \nabla \phi  ) \cdot (\nabla v - \nabla \phi )dx dt \nonumber \\
    && = \int^{T}_{0}\int_{\Omega} ( |\nabla v^{\epsilon}|^{p-2} \nabla v^{\epsilon} \cdot (\nabla v - \nabla v^{\epsilon})dxdt
    +\int^{T}_{0}\int_{\Omega}|\nabla \phi|^{p-2} \nabla \phi  ) \cdot (\nabla v^{\epsilon} - \nabla v)dxdt \nonumber \\
    && + \int^{T}_{0}\int_{\Omega} ( |\nabla v^{\epsilon}|^{p-2} \nabla v^{\epsilon}  - |\nabla \phi|^{p-2} \nabla \phi  ) (\nabla v^{\epsilon} - \nabla \phi)dxdt\nonumber \\
    && = I_{1} + I_{2} + I_{3} .\nonumber \\
\end{eqnarray}
    We see $I_{2} \rightarrow 0$ as ($\epsilon_{j}$ relabeled $\epsilon$) $\epsilon \rightarrow 0$, via Theorem \ref{thm:vubd} and, in particular, \eqref{eq:w1pv}. Furthermore $I_{3} \geq 0$ via Lemma \ref{lem:pc1}. Next, we tackle the $I_{1}$ term. We multiply \eqref{eq:pde_modeld2} by $v-v^{\epsilon}$ and integrate by parts to yield,

\begin{eqnarray}
&& \int^{T}_{0}\int_{\Omega} \left( \frac{\partial v^{\epsilon}}{\partial t}  \right) ( v -  v^{\epsilon} )dx dt \nonumber \\
&& = \int^{T}_{0}\int_{\Omega} ( |\nabla v^{\epsilon}|^{2}  + \epsilon )^{\frac{p-2}{2}} \nabla v^{\epsilon}  ( \nabla (v -   v^{\epsilon}) )dx dt \nonumber \\
&& - \int^{T}_{0}\int_{\Omega} \nabla\cdot (\textbf{q}  v^{\epsilon}) ( v -  v^{\epsilon} )dx dt
+\int^{T}_{0}\int_{\Omega} v^{\epsilon}\Big(m-u^{\epsilon}-v^{\epsilon}\Big) ( v -  v^{\epsilon} )dx dt. \nonumber \\
\end{eqnarray}
     Thus, we have that as ($\epsilon_{j}$, $j \rightarrow \infty$, relabeled as $\epsilon$) $\epsilon \searrow 0$,

\begin{eqnarray}
&& \lim_{\epsilon \rightarrow 0} \int^{T}_{0}\int_{\Omega} \left( \frac{\partial v^{\epsilon}}{\partial t}  \right) ( v -  v^{\epsilon} )dx dt \nonumber \\
&& = \lim_{\epsilon \rightarrow 0} \int^{T}_{0}\int_{\Omega} ( |\nabla v^{\epsilon}|^{2}  + \epsilon )^{\frac{p-2}{2}} \nabla v^{\epsilon}  ( \nabla (v -  v^{\epsilon}))dx dt \nonumber \\
&& - \lim_{\epsilon \rightarrow 0} \int^{T}_{0}\int_{\Omega} \nabla\cdot (\textbf{q}  v^{\epsilon}) ( v -  v^{\epsilon} )dx dt
+ \lim_{\epsilon \rightarrow 0} \int^{T}_{0}\int_{\Omega} v^{\epsilon}\Big(m-u^{\epsilon}-v^{\epsilon}\Big) ( v -  v^{\epsilon} )dx dt. \nonumber \\
\end{eqnarray}
 Using Theorem \ref{thm:w1s} and estimates in Lemma \ref{lem:vlp}, we have the left hand side approaches 0. Whereas via Lemma \ref{lem:ge11n} and  and Lemma \ref{lem:vlp}, we have the second term approaches 0. Lastly, Lemma \ref{lem:ge11n} and  and Lemma \ref{lem:vlp} enable the third term on the right hand side approach zero, and,

 \begin{equation}
\lim_{\epsilon \rightarrow 0} \int^{T}_{0}\int_{\Omega} ( |\nabla v^{\epsilon}|^{2}  + \epsilon )^{\frac{p-2}{2}} \nabla v^{\epsilon} \cdot  ( \nabla ( v -   v^{\epsilon} ))dx dt =  \int^{T}_{0}\int_{\Omega} ( |\nabla v^{\epsilon}|^{p-2} \nabla v^{\epsilon} \cdot (\nabla v - \nabla v^{\epsilon}).
    \end{equation}
 Thus,

 \begin{equation}
0 =  \int^{T}_{0}\int_{\Omega} ( |\nabla v^{\epsilon}|^{p-2} \nabla v^{\epsilon} \cdot (\nabla v - \nabla v^{\epsilon}) + 0 + 0,
    \end{equation}
    which we now inject back into \eqref{eq:iiel} to yield $I_{1} \rightarrow 0$. Now we can say that \eqref{eq:iiel} is true, thus choosing $\psi \in L^{p}(0,T;W^{1,p}(\Omega))$ we have that $\lambda \psi = u - \phi$, with $\lambda \in \mathbb{R}$. Inserting this form into \eqref{eq:iiel} we obtain,

    \begin{equation}
\int^{T}_{0}\int_{\Omega}(\Gamma - |\nabla v - \lambda \nabla \psi|^{p-2} \nabla (v - \lambda  \psi)) \cdot \nabla \psi   dxdt \geq 0.
    \end{equation}
    Letting $\lambda \rightarrow 0^{+}$ and $\lambda \rightarrow 0^{-}$, we obtain the desired equality. This proves the theorem.
\end{proof}

Now we state our main result,

\begin{theorem}
\label{thm:mt1}
Consider the system \eqref{eq:pde_modeld211}, with $2>p>\frac{3}{2}, d_{1}, d_{2}, q,m>0$. If the initial data $u_{0}(x) \in W^{1,\infty}(\Omega), v_{0}(x) \in L^{\infty}(\Omega)$, then there exists a global weak solution $(u,v)$ to system \eqref{eq:pde_modeld211}, in the sense of Definition \ref{def:d11}. Furthermore, there exists a constant $C$, independent of time and initial data, such that

\begin{equation}
   ||u||_{W^{1,\infty}(\Omega)} \leq C, \ ||v||_{L^{\infty}(\Omega)} \leq C, \
  \end{equation}
  for all $t > 0$.
\end{theorem}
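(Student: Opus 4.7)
The plan is to assemble the result by combining the uniform-in-$\epsilon$ estimates of Lemma \ref{lem:ma1} and Lemma \ref{lem:ge11n} with the compactness/weak-convergence framework set up in Theorem \ref{thm:vubd}, Theorem \ref{thm:w1s}, Lemma \ref{lem:vlp}, and Lemma \ref{lem:cnt}, and then passing to the limit $\epsilon = \epsilon_j \searrow 0$ in the weak formulation of the regularised problem \eqref{eq:pde_modeld2}. The structure mirrors the classical approach for degenerate $p$-Laplacian problems: regularise, bound, extract, identify the non-linearity, and conclude.

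First I would fix $T>0$ and invoke Lemma \ref{lem:ge11n} to assert that along any sequence $\epsilon_j \searrow 0$ the approximants $(u^{\epsilon_j}, v^{\epsilon_j})$ satisfy the uniform bounds \eqref{eq:11l}--\eqref{eq:14l}. In particular, $\|u^{\epsilon_j}\|_{W^{1,\infty}}$ and $\|v^{\epsilon_j}\|_{L^\infty}$ are controlled independently of $\epsilon_j$ and of $T$, and $v^{\epsilon_j} \in L^p((0,T);W^{1,p}(\Omega))$ is uniformly bounded. Coupled with Theorem \ref{thm:w1s}, which supplies a uniform estimate on $\partial_t v^{\epsilon_j}$ in $L^{1}((0,T);(W^{1,p}(\Omega))^\ast)$, Theorem \ref{thm:vubd} then yields a (sub)sequence (not relabelled) along which $u^{\epsilon_j} \overset{\ast}{\rightharpoonup} u$, $\nabla u^{\epsilon_j} \overset{\ast}{\rightharpoonup} \nabla u$, $v^{\epsilon_j} \overset{\ast}{\rightharpoonup} v$, $\nabla v^{\epsilon_j} \rightharpoonup \nabla v$ in $L^p_{\mathrm{loc}}(L^p)$, and $|\nabla v^{\epsilon_j}|^{p-2}\nabla v^{\epsilon_j} \rightharpoonup \Gamma$ in $L^{p/(p-1)}_{\mathrm{loc}}(L^{p/(p-1)})$. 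Lemma \ref{lem:vlp} further promotes $v^{\epsilon_j} \to v$ strongly in $L^p((0,T);L^p(\Omega))$ via the Aubin--Lions lemma, which is the crucial input to pass to the limit in the quadratic reaction terms $(v^{\epsilon_j})^2$ and $u^{\epsilon_j} v^{\epsilon_j}$.

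Next I would write out the weak formulation of \eqref{eq:pde_modeld2} against an arbitrary $\phi \in C^\infty_0(\Omega \times [0,T))$, integrating the diffusion and drift terms by parts so that boundary contributions are absorbed by the Danckwerts conditions. Passing $\epsilon_j \searrow 0$ term by term: the time-derivative term converges by the weak-$\ast$ convergence of $v^{\epsilon_j}$; the linear reaction $m v^{\epsilon_j}$ and drift $\mathbf{q} v^{\epsilon_j}$ converge by the same weak convergences; the quadratic terms $(v^{\epsilon_j})^2 \phi$ and $u^{\epsilon_j} v^{\epsilon_j} \phi$ converge by the strong $L^p$ convergence from Lemma \ref{lem:vlp} together with the $L^\infty$ bound on $u^{\epsilon_j}$ from Lemma \ref{lem:ge11n}; and the degenerate diffusion $(|\nabla v^{\epsilon_j}|^2+\epsilon_j)^{(p-2)/2}\nabla v^{\epsilon_j} \cdot \nabla \phi$ converges to $\Gamma \cdot \nabla \phi$ by the weak convergence to $\Gamma$ (noting that the $\epsilon_j$ regularisation of the symbol converges to $|\nabla v^{\epsilon_j}|^{p-2}\nabla v^{\epsilon_j}$ in the appropriate sense, as in Lemma \ref{lem:cnt}). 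The analogous, but easier, passage for $u^{\epsilon_j}$ (whose equation is semilinear and whose diffusion is linear) yields the $u$-equation at the limit.

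The one genuinely non-trivial step, which is the main obstacle, is the identification $\Gamma = |\nabla v|^{p-2}\nabla v$; but this is exactly what Lemma \ref{lem:cnt} furnishes via the Minty monotonicity argument. Once $\Gamma$ is identified, the limiting pair $(u,v)$ satisfies the integral identity of Definition \ref{def:d11}, so it is a global weak solution of \eqref{eq:pde_modeld211}. Finally, the claimed $L^\infty$-in-time bounds
\[
\|u\|_{W^{1,\infty}(\Omega)} \leq C, \qquad \|v\|_{L^\infty(\Omega)} \leq C,
\]
are inherited from the corresponding $\epsilon$-uniform bounds in Lemma \ref{lem:ge11n} by weak-$\ast$ lower semicontinuity of the norms; since the constants in Lemma \ref{lem:ge11n} depend only on $p$, $d_1$, $d_2$, $q$, $m$, $|\Omega|$, and the initial data (not on $T$ or $\epsilon$), these bounds hold for all $t>0$, completing the proof.
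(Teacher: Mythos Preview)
Your proposal is correct and matches the paper's approach exactly: the paper's proof is a single sentence citing Lemma \ref{lem:ge11n}, Theorem \ref{thm:vubd}, Theorem \ref{thm:w1s}, Lemma \ref{lem:vlp}, and Lemma \ref{lem:cnt}, and your write-up is precisely an expanded account of how those pieces assemble (uniform estimates, weak and weak-$\ast$ extraction, Aubin--Lions strong compactness, and the Minty identification of $\Gamma$). No substantive differences.
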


\begin{proof}
The proof follows via Lemma \ref{lem:ge11n}, Theorem \ref{thm:vubd}, Theorem \ref{thm:w1s}, Lemma \ref{lem:vlp} and Lemma \ref{lem:cnt}.
\end{proof}

\section{The faster diffuser ``loses"}

\subsection{The case: $\frac{3}{2} < p <2$}

\begin{theorem}
\label{thm:FFTEdd}
Consider the spatially explicit competition model with drift \eqref{eq:pde_modeld211}.  Then, for all positive initial data $(u_0(x),v_0(x))$ if $d_2> d_1$ and $p=2$, the solution $(u,v) \to (0,v^{*})$. However there exists some positive initial data $(u_0(x),v_0(x))$, and some $p \in(\frac{3}{2},2]$,  $ (u,v) \to (u^{*},0)$ in $L^{2}(\Omega)$.
\end{theorem}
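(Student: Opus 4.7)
The first assertion follows directly from the classical Theorem \ref{thm:clde}. At $p=2$ the $v$-equation in \eqref{eq:pde_modeld211} reduces to a linear advection--diffusion equation with diffusivity $d_2$, and the hypothesis $d_2>d_1$ then yields $(u,v)\to(0,v^*)$.

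For the counterintuitive second claim, the plan is to exploit the finite-time extinction (FTE) property of the $p$-Laplacian for $1<p<2$ on a bounded domain, reinforced by competitive pressure from $u$. Fix $p\in(3/2,2)$ and choose initial data with $u_0\in W^{1,\infty}(\Omega)$ pointwise close to the positive equilibrium $u^*$ of the single-species $u$-subsystem (so that $u_0\geq m+\delta'$ uniformly for some $\delta'>0$), together with $v_0\in L^\infty(\Omega)$ of sufficiently small $L^2$ norm. Testing the $v$-equation against $v$ and using the Danckwerts condition on $\partial\Omega_1$ exactly as in the proof of Theorem \ref{thm:mt1-1}, the upstream boundary terms cancel, the downstream contribution $-\tfrac12\int_{\partial\Omega_2}v^2(\textbf{q}\cdot\eta)\,dS$ has the favourable sign, and the interior drift term is absorbed into the $p$-Laplacian dissipation via the Holder--Young bookkeeping that forces $p>3/2$. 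Provided the lower bound $u(\cdot,t)\geq m+\delta'$ persists, the reaction integral $\int v^2(m-u-v)\,dx\leq -\delta'\|v\|_{L^2}^2$, so the resulting energy identity becomes
\begin{equation*}
\frac{d}{dt}\|v(\cdot,t)\|_{L^2}^2 + C_1\|\nabla v\|_{L^p}^p \leq -\delta'\|v(\cdot,t)\|_{L^2}^2.
\end{equation*}
A Gagliardo--Nirenberg interpolation between $\|\nabla v\|_{L^p}$ and the $L^\infty$-bound of Lemma \ref{lem:ma1} gives $\|\nabla v\|_{L^p}^p\geq C\|v\|_{L^2}^{2\theta}$ for some $\theta\in(0,1)$, turning the energy inequality into a Bernoulli-type ODE $y'\leq -C_2 y^\theta$ for $y=\|v\|_{L^2}^2$. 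Comparison forces $\|v(\cdot,t)\|_{L^2}\to 0$, in finite time because $\theta<1$. After the extinction time the $u$-equation decouples into the single-species advection--diffusion--logistic problem whose positive equilibrium $u^*$ is globally attracting, giving $(u,v)\to(u^*,0)$ in $L^2(\Omega)$.

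The main obstacle is sustaining the lower bound $u\geq m+\delta'$ on a time window long enough for the FTE mechanism to complete --- it is precisely this requirement that restricts the result to ``some'' initial data rather than all. The plan is to close this step by continuous dependence: for $\|v_0\|_{L^2}$ small, $u$ stays $L^\infty$-close, on any finite window, to the solution of the decoupled single-species problem, for which $u^*$ is globally attracting. A contraction/invariant-region argument on the set $\{u\geq m+\delta'/2,\ \|v\|_{L^2}\leq \rho\}$ for small $\rho$, leveraging the uniform a priori bounds of Lemma \ref{lem:ma1} and Theorem \ref{thm:mt1}, should formalise this and close the proof. A secondary technical point is verifying that the Gagliardo--Nirenberg exponent $\theta$ really does lie in $(0,1)$ for both $n=1$ and $n=2$ in the admissible range $p\in(3/2,2)$; this is a direct calculation but is the place where the lower bound $p>3/2$ is essential.
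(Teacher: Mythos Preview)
Your approach differs substantially from the paper's, and it has a concrete gap. The paper never invokes a pointwise lower bound on $u$; it simply discards $\int_\Omega uv^{2}\,dx\ge 0$ and instead retains the $\|v\|_{3}^{3}$ dissipation coming from the self-limitation $-v^{2}$ in the reaction. After absorbing the drift contribution by the H\"older--Young bookkeeping (this is where $p>3/2$ enters), the paper's energy inequality is
\[
\tfrac{1}{2}\tfrac{d}{dt}\|v\|_{2}^{2}+C_{5}\|v\|_{3}^{3}+C\|v_{x}\|_{p}^{p}\le C_{4}\|v\|_{2}^{2}+C_{3},
\]
and a Gagliardo--Nirenberg interpolation of $\|v\|_{2}$ between $\|v_{x}\|_{p}$ and $\|v\|_{3}$ (Lemma~\ref{lem:gns} with $\theta=\tfrac12$) converts the two dissipative terms \emph{jointly} into a sublinear damping, giving $Y'\le C_{3}+MY-\widetilde{C}\,Y^{p/2}$ for $Y=\|v\|_{2}$, with finite-time extinction then imported from \cite{parshad2021some}. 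The cubic term is essential here: it is what supplies damping when $\nabla v\equiv 0$.

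Your Gagliardo--Nirenberg step fails exactly at that point. The inequality $\|\nabla v\|_{L^{p}}^{p}\ge C\|v\|_{L^{2}}^{2\theta}$ cannot hold on a bounded domain with the Danckwerts/Neumann conditions of \eqref{eq:pde_modeld211}: for $v\equiv c>0$ the left side vanishes while the right is positive. The $L^{\infty}$ bound does not rescue this, since GNS on bounded domains carries an additive lower-order term (equivalently, the full $W^{1,p}$ norm), and that extra piece destroys the clean Bernoulli inequality you need for FTE. What your energy identity actually delivers, once the gradient term is dropped, is $y'\le -\delta' y$, i.e.\ exponential decay only. That would still suffice for the theorem as stated (which asks only for convergence), but then you need the lower bound $u\ge m+\delta'$ to persist for \emph{all} time rather than on a finite FTE window, and your continuous-dependence sketch addresses only finite windows. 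There is also a softer structural issue: the choice $u_{0}\ge m+\delta'$ presupposes that the single-species steady state $u^{*}$ strictly exceeds $m$ uniformly on $\Omega$, which is not established for the drift problem with Danckwerts conditions and would itself require proof.
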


\begin{proof}
Consider system $(\ref{eq:pde_model})$, and the equation for for $v$. Let's test the equation for $v$ in  $(\ref{eq:pde_model})$, against $v$ itself,
\[ \int_{\Omega}v v_t =\int_{\Omega}\left( d_2  v v_{xx}  + m v^2 -v^2u-v^3 + k v \dfrac{\partial}{\partial x} \Big(|v_x|^{p-2} \cdot v_x \Big) \right).\]
Integrating over the full domain $\Omega$ and using the boundary conditions $(\ref{pde_model_bc})$, we get,
\[ \dfrac{1}{2} \dfrac{d}{dt} ||v||_2^2 +  ||v||_3^3 + \int_\Omega uv^2 + k ||v_x||_p^p \le ||m||_{\infty}||v||_2^2 + ||v||_{2 + \frac{2-p}{p-1}}^{2 + \frac{2-p}{p-1}} .\]
Using the positivity of $v$ and the fact $m\in \mathcal{L}^{\infty} (\Omega)$, we have,
\[ \dfrac{1}{2} \dfrac{d}{dt} ||v||_2^2 +  ||v||_3^3 + C_{1}||v_x||_p^p \le M ||v||_2^2 + ||v||_{2 + \frac{2-p}{p-1}}^{2 + \frac{2-p}{p-1}},\]
where $M=||m||_{\infty}.$
We first use the weighted $L^{p}$ inequality via Lemma \ref{lem:wlp} where,
\[  L^{3}(\Omega) \dhookrightarrow L^{2 + \frac{2-p}{p-1}}(\Omega) \dhookrightarrow L^{2} (\Omega), .\]
followed by Young's inequality
to obtain,
\begin{equation}
    ||v||_{2 + \frac{2-p}{p-1}} \leq C_{1}||v||^{2}_{3} + C_{2}||v||^{2}_{2} \leq C_{3} + C_{4}||v||^{3}_{3} + C_{2}||v||^{2}_{2},
\end{equation}
such that,

\begin{equation}
    \dfrac{1}{2} \dfrac{d}{dt} ||v||_2^2 +  C_{5}||v||_3^3 + C ||v_x||_p^p \le C_{4}||v||_2^2 + C_{3} .
\end{equation}

Now for $\theta=\frac{1}{2}$, we apply Lemma \ref{lem:gns} to avail,
\begin{equation}\label{GNS_pde}
    ||v||_{2} \le C ||v_x||^{\theta}_{p} ||v||^{1-\theta}_{3}.
\end{equation}
Let's raise the both sides of $(\ref{GNS_pde})$ by $l$, where $l \in (0,2)$
\[ \Big( \int_\Omega v^2 \Big)^{\frac{l}{2}} \le C \Big( \int_\Omega |v_x|^{p} \Big)^{\frac{l \theta}{p}} \Big( \int_\Omega |v|^3 \Big)^{\frac{l(1-\theta)}{3}}. \]
Recall Young's inequality \cite{evans},
\[ ab \le \dfrac{a^r}{r} + \dfrac{b^s}{s}, \]
such that $\frac{1}{r} + \frac{1}{s}=1.$ 
Let's use the Young's inequality for $r = \frac{p}{l \theta}$ and $s=\frac{p}{l(1-\theta)}$, where $\theta = \frac{1}{2}$.  Moreover, $p \in (1,2]$, so we can find a $l \in (0,2)$ such that,
\begin{equation}\label{ODE}
    Y_t \le C_{3} + MY-\widetilde{C} Y^{\alpha},
\end{equation}
where $Y=||v||_2$. As $p\in (1,2]$, we can fix $\alpha =\frac{p}{2}\in (0,1).$ Via similar methods as in \cite{parshad2021some}, we can prove that $\exists T^*<\infty$ such that $Y \to 0$ as $T \to T^*$.


\end{proof}

\begin{remark}
One direct implication of the result of Theorem \ref{thm:classic_pde} along with Theorem \ref{thm:FFTEdd} is that we can say:
The faster diffusing population wins, in the case of drift - but a ``much faster diffusing" population could lose for certain initial conditions. Thus in the case of a system with drift and a ``much faster" disperser - the slower mover can actually have an advantage.
\end{remark}

\begin{corollary}\label{cor:FFTEddc}
Consider the spatially explicit competition model \eqref{eq:pde_model}-\eqref{pde_model_bc} in a bounded domain with smooth boundary $\Omega \subset \mathbb{R}$. If $v(0,t) \leq v(L,t)$, there exists some positive initial data $(u_0(x),v_0(x))$ such that for $d_2(1-k) + k < d_1$ and $p=2$, the solution $(u,v) \to (0,v^{*}),$ but for some $p \in(1,2]$ and $k\in (0,1)$, $ (u,v) \to (u^{*},0)$ in $L^{2}(\Omega)$, starting from the same initial data. 
\end{corollary}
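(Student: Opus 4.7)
The plan is to split on the value of $p$ and exhibit a single initial datum that triggers the two contrasting asymptotics. When $p=2$ the $p$-Laplacian piece collapses to $k\Delta v$, so the flux $a(x,v,\nabla v)$ in \eqref{eq:pde_model} reduces to $(d_{2}(1-k)+k)\nabla v$. The assumption $d_{2}(1-k)+k<d_{1}$ then makes $v$ the slower disperser, and Theorem \ref{thm:classic_pde} delivers $(u,v)\to(0,v^{*})$ uniformly in space for \emph{any} positive initial data. This is the straightforward half.

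For $p\in(\tfrac{3}{2},2)$ I would essentially replay the calculation behind Theorem \ref{thm:FFTEdd}, but for the system \eqref{eq:pde_model}--\eqref{pde_model_bc}. Testing the $v$-equation against $v$ yields a boundary contribution $\int_{\partial\Omega} v\,|\nabla v|^{p-2}\nabla v\cdot\eta$, and the one-sided ordering $v(0,t)\leq v(L,t)$ is precisely what is needed to control its sign so that the interior inequality
\[
\tfrac12\tfrac{d}{dt}\|v\|_{2}^{2}+C_{5}\|v\|_{3}^{3}+C\|v_{x}\|_{p}^{p}\leq M\|v\|_{2}^{2}+\|v\|_{2+\frac{2-p}{p-1}}^{2+\frac{2-p}{p-1}}
\]
persists. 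The middle $L^{q}$ norm is absorbed into $\|v\|_{3}^{3}$ via $L^{3}\dhookrightarrow L^{2+\frac{2-p}{p-1}}\dhookrightarrow L^{2}$, an embedding that is available exactly because $p>\tfrac{3}{2}$. A Gagliardo--Nirenberg estimate with $\theta=\tfrac12$ together with Young's inequality then reduces the bound to the scalar comparison $Y'\leq C_{3}+MY-\widetilde{C}Y^{\alpha}$ with $Y=\|v\|_{2}$ and $\alpha=p/2\in(0,1)$. Because $\alpha<1$, choosing $v_{0}$ with $\|v_{0}\|_{2}$ large enough relative to the problem constants forces $Y\to 0$ in finite time $T^{*}$, so $v(\cdot,t)\equiv 0$ for $t\geq T^{*}$. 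After that time the $u$-equation decouples into the scalar logistic problem $u_{t}=d_{1}\Delta u+u(m(x)-u)$ under homogeneous Neumann data, whose unique positive steady state $u^{*}$ is globally attracting, giving $(u,v)\to(u^{*},0)$ in $L^{2}(\Omega)$.

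The only remaining issue is compatibility of initial data between the two regimes. Theorem \ref{thm:classic_pde} admits \emph{any} positive $(u_{0},v_{0})$, whereas the finite-time-extinction step requires only that $\|v_{0}\|_{2}$ exceed a threshold determined by $C_{3},M,\widetilde{C},\alpha$ (constants which depend on $p,k,d_{2},\Omega$ but not on the initial datum). Any $(u_{0},v_{0})$ chosen to clear this threshold therefore simultaneously satisfies the hypotheses of Theorem \ref{thm:classic_pde}, so a single initial pair generates opposite long-time behaviors as $p$ is tuned from $2$ down into $(\tfrac{3}{2},2)$. The main subtlety I anticipate is verifying that the boundary contribution from the $p$-Laplacian flux indeed carries the correct sign under the ordering $v(0,t)\leq v(L,t)$; once that sign is confirmed the rest is a direct packaging of the two earlier theorems.
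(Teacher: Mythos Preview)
Your overall architecture---invoke Theorem~\ref{thm:classic_pde} for the $p=2$ direction, then rerun the finite-time-extinction estimate of Theorem~\ref{thm:FFTEdd} for some $p\in(\tfrac32,2)$---is exactly what the paper has in mind; the corollary is stated without proof precisely because it is meant to be packaged from those two results. Two concrete points need repair, though.

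First, under the Neumann conditions \eqref{pde_model_bc} the boundary contributions from \emph{both} the linear and the $p$-Laplacian fluxes vanish identically when you test the $v$-equation against $v$: $[v\,v_x]_0^L=0$ and $[v\,|v_x|^{p-2}v_x]_0^L=0$. Consequently the term $\|v\|_{2+\frac{2-p}{p-1}}^{2+\frac{2-p}{p-1}}$, which in Theorem~\ref{thm:FFTEdd} arises from Young's inequality applied to the drift integral $\int \mathbf{q}\,v\,v_x$, simply does not appear here. The energy inequality reduces to
\[
\tfrac12\,\tfrac{d}{dt}\|v\|_2^2 + d_2(1-k)\|v_x\|_2^2 + k\|v_x\|_p^p + \|v\|_3^3 \le M\|v\|_2^2,
\]
so there is no boundary sign to control and hence the hypothesis $v(0,t)\le v(L,t)$ plays no role at this step; your anticipated ``main subtlety'' evaporates.

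Second, and more substantively, the threshold on $\|v_0\|_2$ goes the other way. After the Gagliardo--Nirenberg/Young step you arrive at $Y'\le MY-\widetilde{C}Y^{\alpha}$ with $Y=\|v\|_2$ and $\alpha=p/2\in(0,1)$. For large $Y$ the linear term $MY$ dominates $\widetilde{C}Y^{\alpha}$ and the inequality gives no decay; it is when $Y_0$ is \emph{small} (precisely $Y_0^{1-\alpha}<\widetilde{C}/M$) that the sublinear absorption wins and drives $Y\to 0$ in finite time. So the compatible initial datum should be chosen with $\|v_0\|_2$ below this threshold, not above it; Theorem~\ref{thm:classic_pde} still applies to that datum since it covers all positive initial conditions.
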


\begin{remark}
One direct implication of the result of Corollary  \ref{cor:FFTEddc} along with Theorem \ref{thm:FFTEdd} is that we can say:
The faster dispersing population wins in the case of drift - but a faster diffusing population with a few ``much faster" dispersers could lose for certain initial conditions. 
\end{remark}

\subsection{The case: $\frac{4}{3} < p <\frac{3}{2} $}

We will first provide an auxilliary lemma,

\begin{lemma}
\label{lem:l1e}
Consider \eqref{eq:pde_modeld2}. Let $\frac{3}{2} > p > \frac{4}{3}, q,m>0$, and $\Omega \subset \mathbb{R}^{n}, n=1,2$, s.t. $|\Omega| < \infty$, with smooth boundary. If the initial data $u_{0}(x) \in W^{1,\infty}(\Omega), v_{0}(x) \in L^{\infty}$ is sufficiently small, then for any $(d_{1},d_{2})$ and $q \geq 1$,there exists a $C=C(q)$ s.t. for any $1>>\epsilon > 0$, we have,

\begin{equation}
    ||v^{\epsilon}||_{L^{q}(\Omega)} \leq C,
\end{equation}
  for all $t \in [0, T_{max, \epsilon})$  .
\end{lemma}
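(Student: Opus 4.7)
The plan is to extend the strategy of Theorem \ref{thm:mt1-1} to the critical regime $\frac{4}{3} < p < \frac{3}{2}$. The obstruction here is precisely the one flagged in the remark after Theorem \ref{thm:mt1-1}: since $q^{**} := q + \frac{2-p}{p-1}$ now satisfies $q^{**} > q+1$, the bad term $\int_\Omega (v^\epsilon)^{q^{**}} dx$ cannot be absorbed by the dissipative $L^{q+1}$ term directly. The new ingredient I would introduce is the smallness hypothesis on the initial data, which opens the door to a continuity/bootstrap argument.

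First, I would reproduce the calculation from Theorem \ref{thm:mt1-1}, multiplying the $v^\epsilon$ equation by $q(v^\epsilon)^{q-1}$ and disposing of the boundary integrals via the Danckwerts condition on $\partial\Omega_1$ and the prescribed condition on $\partial\Omega_2$. Applying \eqref{eq:in1}--\eqref{eq:in2} and Young's inequality on the drift term with small $\epsilon_1$, this produces
\begin{equation*}
\frac{d}{dt}\|v^\epsilon\|_q^q + C_0 \|\nabla w\|_p^p + q\|v^\epsilon\|_{q+1}^{q+1} \leq C_1 + mq\|v^\epsilon\|_q^q + C_2 \|v^\epsilon\|_{q^{**}}^{q^{**}},
\end{equation*}
where $w := (v^\epsilon)^{(p+q-2)/p}$ and $C_1$ absorbs the $O(\epsilon^{p/2})$ term.

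Next, I would interpolate the offending $L^{q^{**}}$ term via Gagliardo--Nirenberg on $w$, exactly as in the derivation in Lemma \ref{lem:ma1}. Writing $\|v^\epsilon\|_{q^{**}}^{q^{**}} = \|w\|_r^r$ with $r = q^{**} p/(p+q-2)$, the GN inequality yields a bound of the form $\|w\|_r \leq C\|\nabla w\|_p^{b} \|w\|_s^{1-b} + C \|w\|_s$ for a convenient lower exponent $s$ (e.g.\ $s = p/(p+q-2)$ so that $\|w\|_s^s \sim \|v^\epsilon\|_1$). The key is that the restriction $p > \frac{4}{3}$ is precisely what keeps the interpolation exponent $b$ strictly less than $1$, so that Young's inequality splits this into $\delta \|\nabla w\|_p^p$ (absorbed into $C_0 \|\nabla w\|_p^p$) plus a super-linear power of $\|v^\epsilon\|_q^q$. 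After this absorption the inequality reduces to
\begin{equation*}
\frac{d}{dt} Y \leq C_3 + C_4 Y - C_5 Y^{1 + \frac{1}{q}} + C_6 Y^{\alpha}, \qquad Y := \|v^\epsilon\|_q^q,
\end{equation*}
with $\alpha > 1 + \frac{1}{q}$ the supercritical exponent inherited from $q^{**}$.

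The final step is a continuity argument: provided $Y(0) = \|v_0\|_q^q$ is below the threshold at which the damping $-C_5 Y^{1+1/q}$ dominates the bad growth $+C_6 Y^\alpha$, the scalar ODE comparison keeps $Y(t)$ bounded uniformly in $t \in [0,T_{\max,\epsilon})$ and in $\epsilon$. This threshold is strictly positive because the competing powers differ and the intersection of the two curves is an isolated point; hence a smallness condition on $\|v_0\|_\infty$ (together with $|\Omega|<\infty$) produces the required $\|v_0\|_q$ control for every fixed $q$. I expect the main technical obstacle to be pinning down the precise smallness threshold uniformly in $q$, since the exponent $\alpha$ and the constant $C_6$ depend on $q$ through the GN interpolation; this is where care must be taken to propagate the estimate through all $q \geq 1$ without the tolerable initial size shrinking to zero. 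The sharpness of the condition $p > \frac{4}{3}$ is structural: for $p \leq \frac{4}{3}$ one would have $b \geq 1$ in the GN step, and no amount of smallness would let the gradient term absorb the super-critical contribution.
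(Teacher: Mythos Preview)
Your approach is correct and mirrors the paper's: both start from the differential inequality for $Y=\|v^\epsilon\|_q^q$ inherited from Theorem~\ref{thm:mt1-1}, recognize that the supercritical term $\|v^\epsilon\|_{q^{**}}^{q^{**}}$ now acts as a superlinear source competing against the $-Y^{1+1/q}$ damping, and close via a small-data ODE comparison. The paper's argument is in fact terser than yours---it simply writes the bad term as an unspecified ``super linear source term $F(Y)$'' and invokes ODE comparison directly, without the intermediate Gagliardo--Nirenberg step you insert; your version supplies the justification for why this term can legitimately be expressed as a power of $Y$ after absorbing a gradient piece, and your observations about the structural role of $p>\tfrac{4}{3}$ and the $q$-uniformity of the smallness threshold go beyond what the paper spells out.
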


\begin{proof}
From Lemma \ref{lem:ma1}, we have,

\begin{equation}
 \frac{d}{dt}||v^{\epsilon}||^{q}_{q} 
+q||v^{\epsilon}||^{1+q}_{1+q} 
 \leq mq\int_{\Omega} (v^{\epsilon})^{q}dx +  f(\epsilon_{1})\int_{\Omega} (v^{\epsilon})^{q + \left(\frac{2-p}{p-1}\right)}dx.
\end{equation}
Setting $p=\frac{4}{3}$, we have ,

\begin{equation}
 \frac{d}{dt}||v^{\epsilon}||^{q}_{q} 
+q||v^{\epsilon}||^{1+q}_{1+q} 
 \leq mq\int_{\Omega} (v^{\epsilon})^{q}dx +  f(\epsilon_{1})\int_{\Omega} (v^{\epsilon})^{q+2}dx.
\end{equation}
Setting $Y(t)=||v^{\epsilon}||^{q}_{q}$, standard embeddings of $L^{q+1}(\Omega) \hookrightarrow L^{q}(\Omega)$ yields,

\begin{equation}
 \frac{d}{dt}Y \leq mq Y + F(Y) - CqY^{\left(\frac{1+q}{q}\right)}.
\end{equation}
Here, $F(Y)$ is a super linear source term. Standard comparison with the ODE,

\begin{equation}
 \frac{d}{dt}Y = C_{1} Y + F(Y) - C_{2} Y^{\left(\frac{1+q}{q}\right)}
\end{equation}
for some constants $C_{1},C_{2}>0$, yields global existence, for sufficiently small initial data. 
\end{proof}

\begin{remark}
The result of Lemma \ref{lem:l1e} says we have small data global existence of weak solutions to \eqref{eq:pde_modeld211}, even when $\frac{3}{2} > p $. Herein, we apply the same techniques as earlier and pass to the limit as $\epsilon \rightarrow 0$, following previous estimates via Lemma \ref{lem:ma1}.
\end{remark}
This lemma will be used next. We state the following theorem,

\begin{theorem}
\label{thm:FFTEdd1}
Consider the spatially explicit competition model with drift \eqref{eq:pde_modeld211}.  There exists some positive initial data $(u_0(x),v_0(x))$ such that for $d_2> d_1$ and $p=2$, the solution $(u,v) \to (0,v^{*}),$ but for some $p \in(\frac{4}{3},\frac{3}{2})$  $ (u,v) \to (u^{*},0)$ in $L^{2}(\Omega)$, starting from the same initial data. 
\end{theorem}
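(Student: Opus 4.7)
The plan is to reduce Theorem \ref{thm:FFTEdd1} to the same finite-time extinction ODE argument that drives Theorem \ref{thm:FFTEdd}, but now combined with the small-data $L^{\infty}$ bound furnished by Lemma \ref{lem:l1e}. The contrast with $p=2$ is immediate from Theorem \ref{thm:clde}: if $d_{2}>d_{1}$ and $p=2$, any positive initial data produces $(u,v)\to(0,v^{*})$, so it suffices to exhibit one family of small positive initial data for which $v\to 0$ in $L^{2}(\Omega)$ when $\frac{4}{3}<p<\frac{3}{2}$.

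First, I would apply Lemma \ref{lem:l1e} together with the $\epsilon\searrow 0$ passage-to-the-limit machinery (Lemmas \ref{lem:vlp} and \ref{lem:cnt}, Theorem \ref{thm:mt1}) to secure a global weak solution $(u,v)$ of \eqref{eq:pde_modeld211} satisfying $\|v(\cdot,t)\|_{L^{\infty}(\Omega)}\le C$ uniformly in $t$, provided the initial data is small enough. Next, I would test the $v$-equation against $v$ itself, integrate by parts, annihilate the upstream boundary terms via the Danckwerts condition on $\partial\Omega_{1}$, control the downstream term on $\partial\Omega_{2}$ by the Neumann data, and handle the drift contribution by Holder-Young exactly as in the proof of Theorem \ref{thm:FFTEdd}. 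The outcome is the differential inequality
\begin{equation*}
\tfrac{1}{2}\tfrac{d}{dt}\|v\|_{2}^{2} + \|v\|_{3}^{3} + C_{1}\|v_{x}\|_{p}^{p} \le M\|v\|_{2}^{2} + \|v\|_{2+\frac{2-p}{p-1}}^{2+\frac{2-p}{p-1}}.
\end{equation*}

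The main obstacle is the last term on the right: when $p<\frac{3}{2}$, the exponent $2+\frac{2-p}{p-1}>3$, so the embedding chain $L^{3}\dhookrightarrow L^{2+\frac{2-p}{p-1}}\dhookrightarrow L^{2}$ that drove Theorem \ref{thm:FFTEdd} reverses direction and is useless here. I would circumvent this by writing
\begin{equation*}
v^{2+\frac{2-p}{p-1}} \;=\; v^{3}\cdot v^{\frac{3-2p}{p-1}},
\end{equation*}
noting that the exponent $\tfrac{3-2p}{p-1}$ is positive and stays bounded on $p\in(\tfrac{4}{3},\tfrac{3}{2})$, so that the $L^{\infty}$ bound from the first step yields
\begin{equation*}
\|v\|_{2+\frac{2-p}{p-1}}^{2+\frac{2-p}{p-1}} \;\le\; \|v\|_{\infty}^{\frac{3-2p}{p-1}}\|v\|_{3}^{3} \;\le\; \delta\|v\|_{3}^{3},
\end{equation*}
with $\delta<1$ provided the initial data is chosen small enough (since Lemma \ref{lem:l1e} propagates the smallness of $\|v_{0}\|_{\infty}$). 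The supercubic term is then absorbed into the cubic term on the left-hand side.

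Once this absorption is done, the differential inequality has exactly the structure analyzed in Theorem \ref{thm:FFTEdd}, and the remainder is routine: a Gagliardo--Nirenberg--Sobolev interpolation (Lemma \ref{lem:gns}) with $\theta=\tfrac{1}{2}$ bounds $\|v\|_{2}$ by $\|v_{x}\|_{p}^{\theta}\|v\|_{3}^{1-\theta}$; raising both sides to a suitable power $l\in(0,2)$ and applying Young's inequality with exponents $r=p/(l\theta)$, $s=p/(l(1-\theta))$ collapses everything to the scalar ODE $Y_{t}\le C_{3}+MY-\widetilde{C}Y^{\alpha}$ with $Y=\|v\|_{2}$ and $\alpha=p/2\in(0,1)$. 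Comparison with this ODE (as in \cite{parshad2021some}) produces a $T^{*}<\infty$ with $Y(T^{*})=0$. Persistence of $u$ once $v\equiv 0$ is standard, since the $u$-equation then reduces to a logistic equation with drift whose non-trivial equilibrium $u^{*}$ is globally asymptotically stable, giving $(u,v)\to(u^{*},0)$ in $L^{2}(\Omega)$ and completing the proof.
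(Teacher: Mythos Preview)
Your derivation of the energy inequality and the final reduction to the scalar ODE $Y_t \le C_3 + MY - \widetilde{C}Y^{p/2}$ both mirror the paper. The divergence is in how you dispose of the supercubic term $\|v\|_{2+\frac{2-p}{p-1}}^{2+\frac{2-p}{p-1}}$ on the right-hand side. The paper does \emph{not} invoke $L^\infty$ smallness here; instead it applies Gagliardo--Nirenberg (Lemma~\ref{lem:gns}) directly to this norm, interpolating between $\|v_x\|_p$ and $\|v\|_2$ with exponent $\theta=\frac{2-p}{2(p-1)}$. The constraint $\theta\le 1$ is exactly $p>\tfrac{4}{3}$, which is what singles out this range. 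After Young's inequality the gradient contribution absorbs into $C_1\|v_x\|_p^p$ on the left, and only powers of $\|v\|_2$ survive on the right, feeding straight into the ODE comparison.

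Your alternative---writing $v^{2+\frac{2-p}{p-1}}=v^{3}\cdot v^{\frac{3-2p}{p-1}}$ and bounding the excess factor by $\|v\|_\infty^{\frac{3-2p}{p-1}}$---has a gap in the smallness claim. Lemma~\ref{lem:l1e} asserts that for sufficiently small data the $L^q$ norms stay \emph{bounded}, but the bound is dictated by the equilibrium of the comparison ODE $Y'=C_1Y+F(Y)-C_2Y^{(1+q)/q}$, which for small $Y$ reads $Y'\approx C_1Y>0$ and pushes $Y$ up toward a value of order $m^q$ regardless of how small $Y(0)$ is. In other words, Lemma~\ref{lem:l1e} prevents blowup; it does not propagate smallness of $\|v_0\|_\infty$. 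If $m\ge 1$ there is no reason to expect $\|v(\cdot,t)\|_\infty<1$ uniformly in $t$, and your absorption $\delta<1$ is unjustified. The paper's GNS route sidesteps this issue entirely and, as a bonus, makes the threshold $p>\tfrac{4}{3}$ appear naturally rather than being inherited opaquely from Lemma~\ref{lem:l1e}.
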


\begin{proof}
    Consider the system $(\ref{eq:pde_model})$ for $v$, 
Let's test the PDE $(\ref{eq:pde_model})$ for $v$ against $v$
\[ \int_{\Omega}v v_t =\int_{\Omega}\left( d_2  v v_{xx}  + m v^2 -v^2u-v^3 + k v \dfrac{\partial}{\partial x} \Big(|v_x|^{p-2} \cdot v_x \Big) \right).\]
Integrating over the full domain $\Omega$ and using the boundary conditions $(\ref{pde_model_bc})$, we get,
\[ \dfrac{1}{2} \dfrac{d}{dt} ||v||_2^2 +  ||v||_3^3 + \int_\Omega uv^2 + k ||v_x||_p^p \le ||m||_{\infty}||v||_2^2 + ||v||_{2 + \frac{2-p}{p-1}}^{2 + \frac{2-p}{p-1}} .\]
Using the positivity of $v$ and the fact $m\in L^{\infty} (\Omega)$, we have,
\[ \dfrac{1}{2} \dfrac{d}{dt} ||v||_2^2 +  ||v||_3^3 + C_{1}||v_x||_p^p \le M ||v||_2^2 + ||v||_{2 + \frac{2-p}{p-1}}^{2 + \frac{2-p}{p-1}},\]
where $M=||m||_{\infty}.$

Now we choose exponents such that, 

\[ n=2, \quad  k=0, \quad p'=2 + \frac{2-p}{p-1}, \quad m=1, \quad q'=p, \quad q=2,\]
and
\[ -\dfrac{2}{2+\frac{2-p}{p-1}} = \theta \Big( 1 - \dfrac{2}{p}  \Big) - (1-\theta)).\]
On further rearrangement, we have
\begin{equation}\label{GNS_est}
 0<   \theta = \dfrac{(2-p)}{2p-2}.
\end{equation}
We always have $\theta < 1$, when $p>\frac{4}{3}$ .
Applying Lemmma \ref{lem:gns}, 
\begin{equation}\label{GNS_pde2}
    ||v||_{2+\frac{2-p}{p-1}} \le C ||v_x||^{\theta}_{p} ||v||^{1-\theta}_{2}.
\end{equation}
Now let's raise the both sides of $(\ref{GNS_pde2})$ by $l$, where $l \in (0,2)$
\[ \Big( \int_\Omega v^{2 + \frac{2-p}{p-1}} \Big)^{\frac{l}{2 + \frac{2-p}{p-1}}} \le C \Big( \int_\Omega |v_x|^{p} \Big)^{\frac{l \theta}{\widehat{p}}} \Big( \int_\Omega |v|^2 \Big)^{\frac{l(1-\theta)}{2}}. \]
Again, via Young's inequality \cite{evans},
\[ ab \le \dfrac{a^r}{r} + \dfrac{b^s}{s}, \] $\frac{1}{r} + \frac{1}{s}=1$ , with $r = \frac{p}{l \theta}$ and $s=\frac{p}{l(1-\theta)}$, where $\theta = \frac{1}{2}$ and Moreover, $p \in (1,2]$, so we can find a $l \in (0,2)$ such that
we have a differential inequality of the form,
\begin{equation}\label{ODE2}
    Y_t \le C_{3} + MY-\widetilde{C} Y^{\alpha},
\end{equation}
where $Y=||v||_2$. As $p\in (1,2]$, we can fix $\alpha =\frac{p}{2}\in (0,1).$ Via similar methods as in \cite{parshad2021some}, we can prove that $\exists T^*<\infty$ such that $Y \to 0$ as $T \to T^*$.

\end{proof}

\section{Numerical Results}

We perform numerical simulations to explore the dynamics of system \eqref{eq:pde_modeld211} . 
In order to solve system \eqref{eq:pde_modeld211} numerically, we employ a simple finite difference method in space for $N=300$ grid points and a Runge-Kutta solver in time. To avoid places where $(\nabla u)^{p-2}$ is undefined, we perform simulations for the regularized system \eqref{eq:pde_modeld2} with $\epsilon = 10^{-4}$. In each simulation, we model two species $u$ and $v$ where $v$ has a larger diffusion coefficient, $d_2>d_1$. Therefore, we refer to $v$ as the fast diffuser.

\begin{figure}[h!]
    \centering
    \begin{subfigure}[t]{0.3\textwidth}
        \centering
        \includegraphics[height=1.2in]{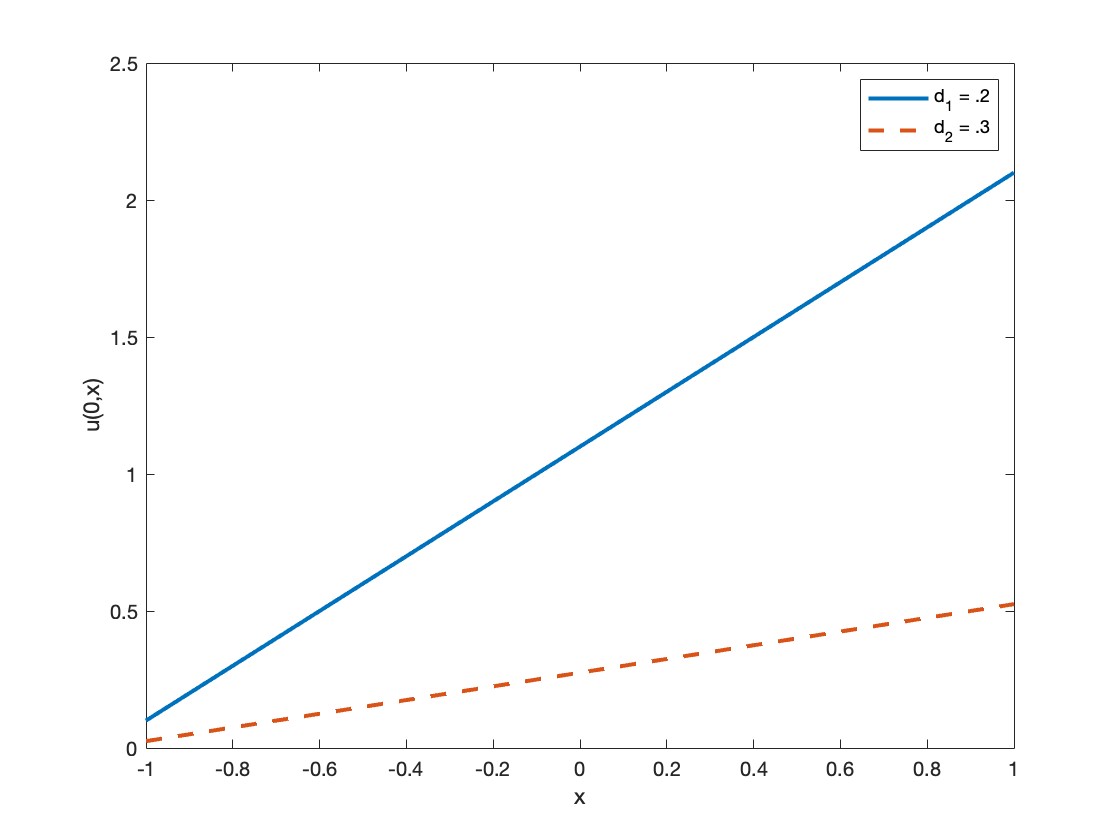}
        \caption{$p=2$, $t = 0$}
        \label{fig:classic-a}
    \end{subfigure}%
    ~ 
    \begin{subfigure}[t]{0.3\textwidth}
        \centering
        \includegraphics[height=1.2in]{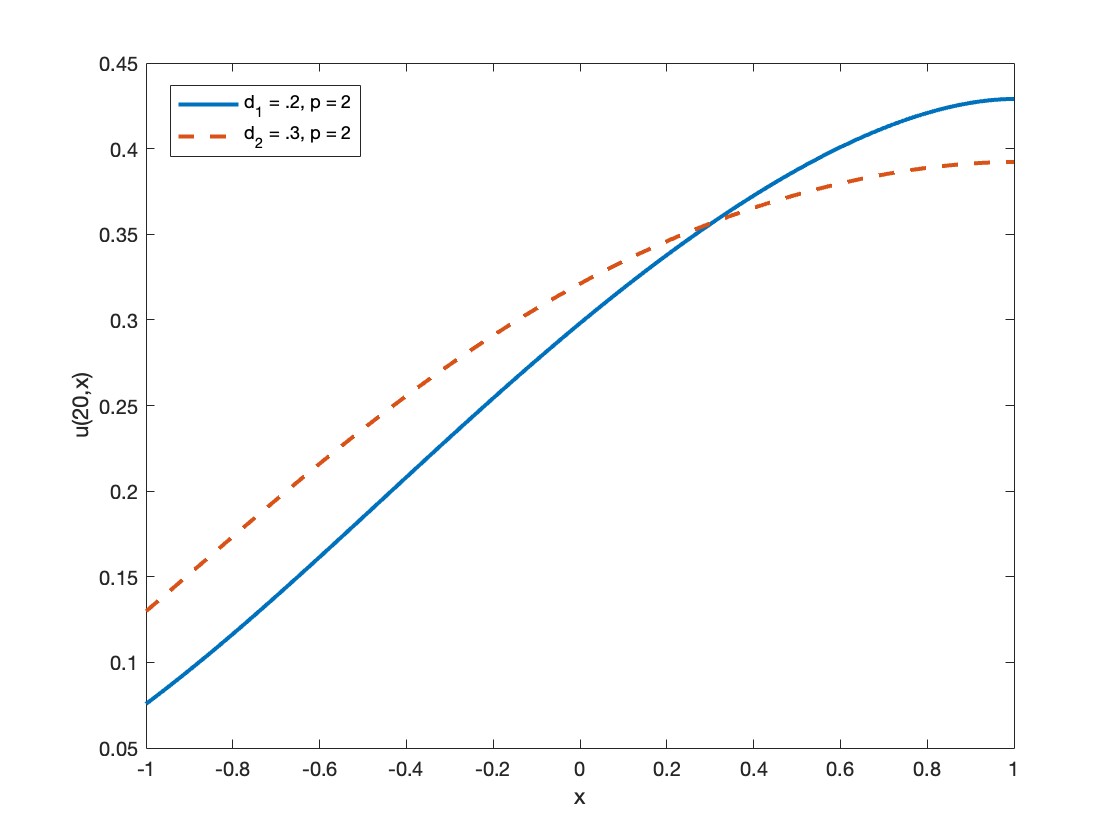}
        \caption{$p=2$, $t = 10$}
        \label{fig:classic-b}
    \end{subfigure}
        ~ 
    \begin{subfigure}[t]{0.3\textwidth}
        \centering
        \includegraphics[height=1.2in]{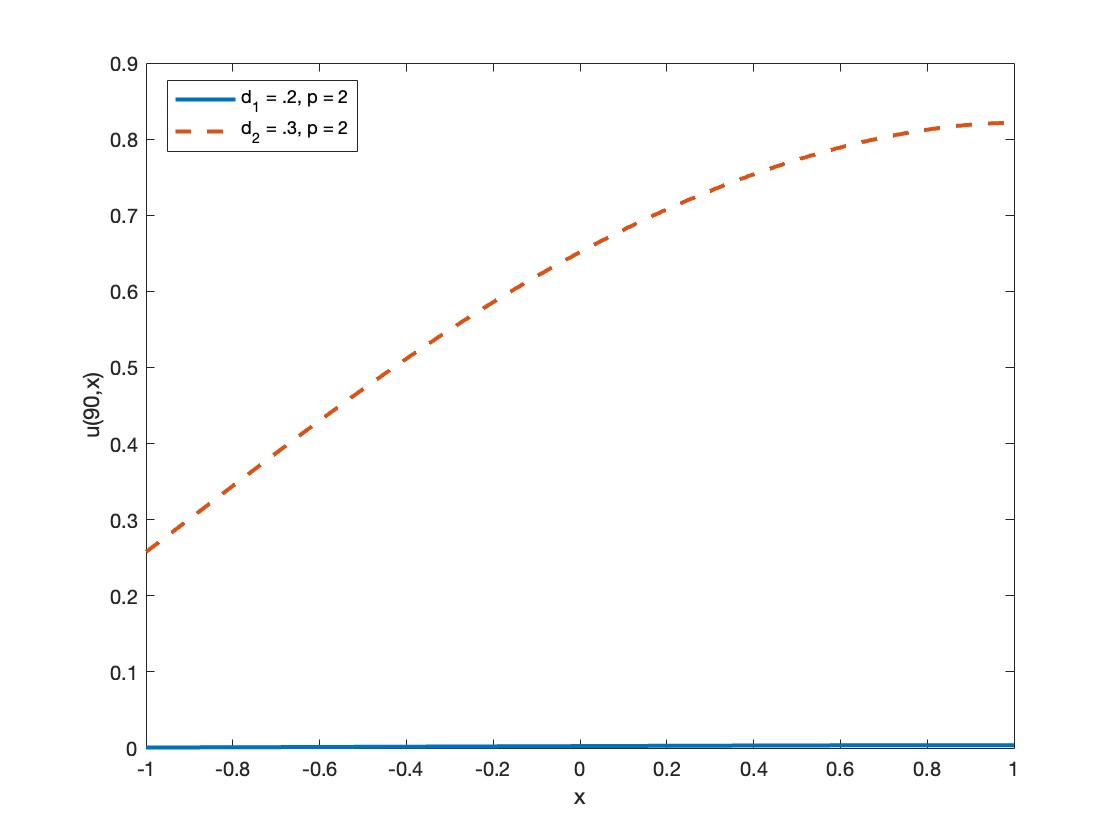}
        \caption{$p=2$, $t = 90$}
        \label{fig:classic-c}
    \end{subfigure}\\
        \begin{subfigure}[t]{0.3\textwidth}
        \centering
        \includegraphics[height=1.2in]{q-pt5-IC.jpg}
        \caption{$p=7/4$, $t = 0$}
        \label{fig:theorem1-d}
    \end{subfigure}%
    ~ 
    \begin{subfigure}[t]{0.3\textwidth}
        \centering
        \includegraphics[height=1.2in]{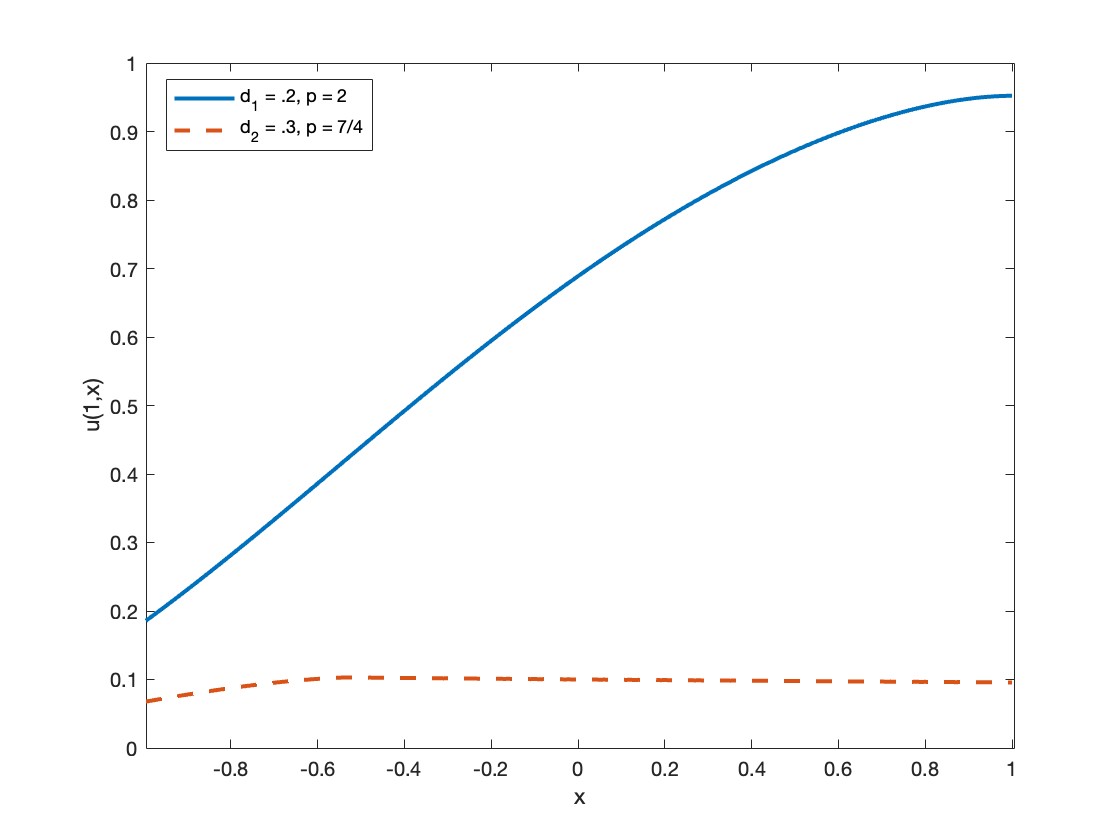}
        \caption{$p=7/4$, $t = 1$}
        \label{fig:theorem1-e}
    \end{subfigure}
        ~ 
    \begin{subfigure}[t]{0.3\textwidth}
        \centering
        \includegraphics[height=1.2in]{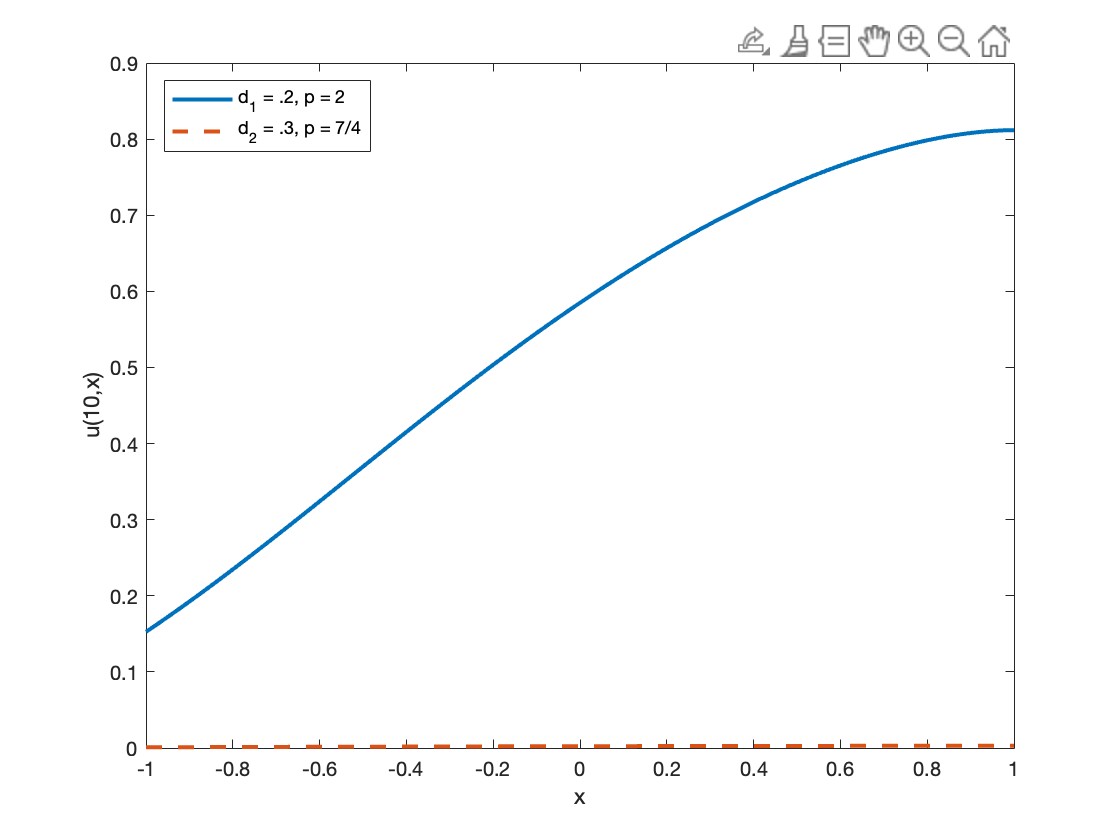}
        \caption{$p=7/4$, $t = 10$}
        \label{fig:theorem1-f}
    \end{subfigure}\\
     \begin{subfigure}[t]{0.3\textwidth}
        \centering
        \includegraphics[height=1.2in]{q-pt5-IC.jpg}
        \caption{$p=7/5$, $t = 0$}
        \label{fig:theorem2-g}
    \end{subfigure}%
    ~ 
    \begin{subfigure}[t]{0.3\textwidth}
        \centering
        \includegraphics[height=1.2in]{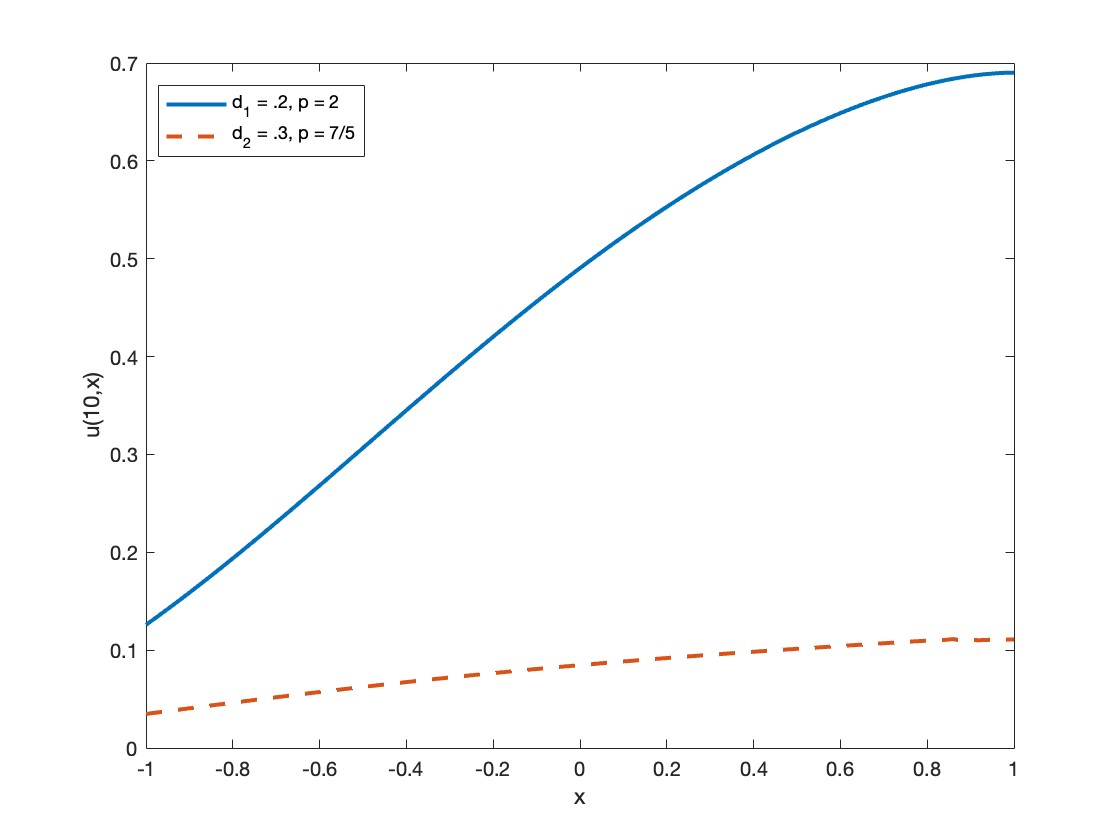}
        \caption{$p=7/5$, $t = 10$}
        \label{fig:theorem2-h}
    \end{subfigure}
        ~ 
    \begin{subfigure}[t]{0.3\textwidth}
        \centering
        \includegraphics[height=1.2in]{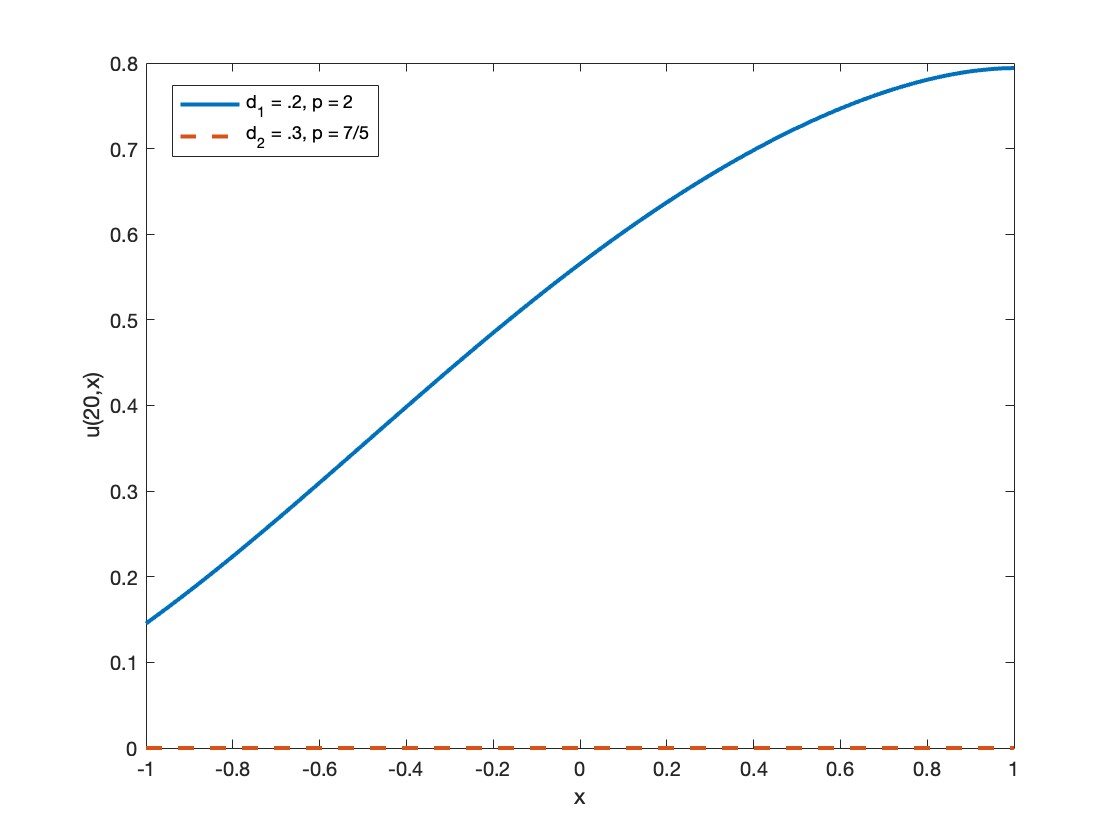}
        \caption{$p=7/5$, $t = 20$}
        \label{fig:theorem2-i}
    \end{subfigure}
    \caption{Solutions to system \eqref{eq:pde_modeld2} with $q=0.5$, $d_1 = 0.2$, and $d_2 = 0.3$.}
    \label{fig:theorem}
\end{figure}

In Figure \ref{fig:theorem}, we demonstrate the results of Theorem \ref{thm:FFTEdd} and \ref{thm:FFTEdd1} by comparing three cases: the classical case when $p=2$, the case where $\frac{3}{2}<p<2$, and the case where $\frac{4}{3}<p<\frac{3}{2}$. In each of these cases, we show the numerical solution at three different points in time: the initial distribution, an intermediate time point, and at equilibrium. In Figures \ref{fig:classic-a}-\ref{fig:classic-c}, both species follow classic linear diffusion ($p=2$). It is known that the fast diffuser competitively excludes the slow diffuser which we observe at equilibrium in Figure \ref{fig:classic-c}. Figures \ref{fig:theorem1-d}- \ref{fig:theorem1-f}, we compare the case where species $u$ follows linear diffusion, $p=2$, and species $v$ follows fast $p$-Laplacian diffusion with $p=7/4$, as in Theorem \ref{thm:FFTEdd}. With the same initial conditions as the classical case, if species $v$ follows non-linear diffusion with $\frac{3}{2}<p<2$, Figure \ref{fig:theorem1-f} illustrates the equilibrium where the slow diffuser prevails, contrary to when $p=2$. Finally, Figures \ref{fig:theorem2-g}-\ref{fig:theorem2-i} illustrate the results of Theorem \ref{thm:FFTEdd1}, where $\frac{4}{3} < p < \frac{3}{2}$. With the same initial condition as Figure \ref{fig:classic-a},  in the long run, the slow diffuser outcompetes the fast diffuser, shown in Figure \ref{fig:theorem2-i}.

\begin{figure}[h!]
    \centering
        \begin{subfigure}[t]{0.3\textwidth}
        \centering
        \includegraphics[height=1.2in]{q-pt5-IC.jpg}
        \caption{ $t = 0$}
        \label{fig:fraction-a}
    \end{subfigure}%
    ~ 
    \begin{subfigure}[t]{0.3\textwidth}
        \centering
        \includegraphics[height=1.2in]{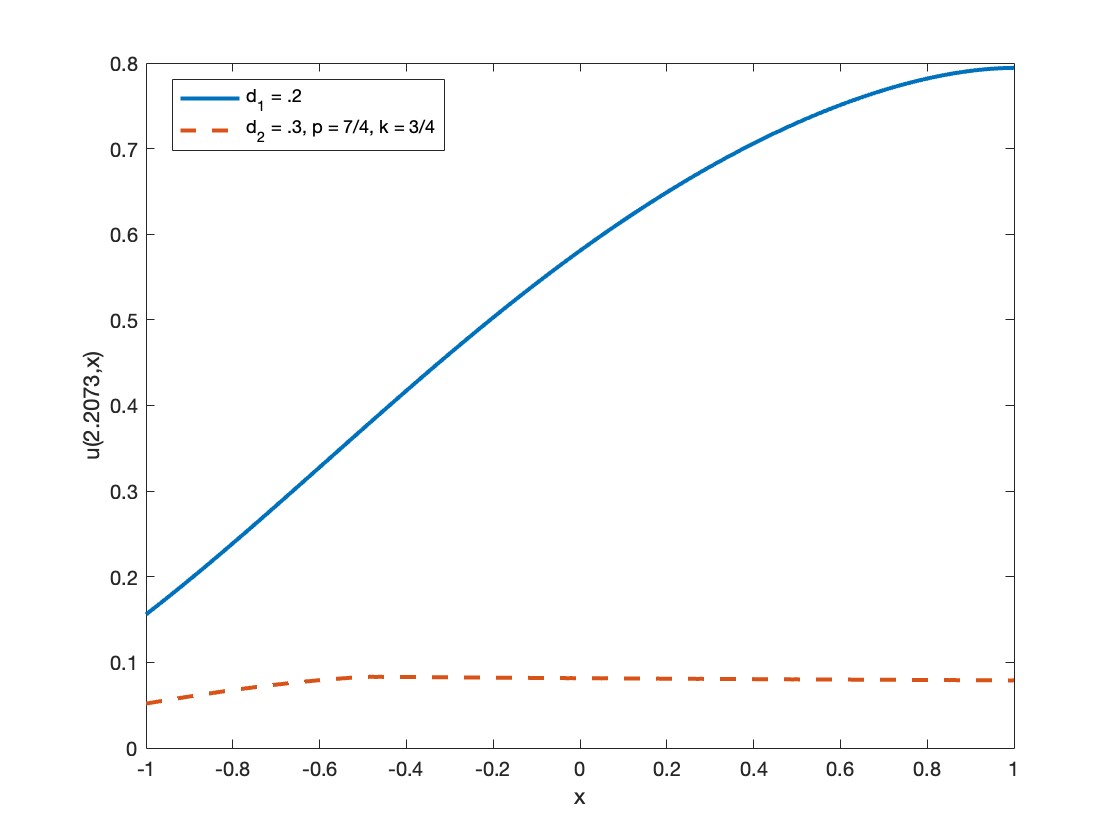}
        \caption{ $t = 1$}
        \label{fig:fraction-b}
    \end{subfigure}
        ~ 
    \begin{subfigure}[t]{0.3\textwidth}
        \centering
        \includegraphics[height=1.2in]{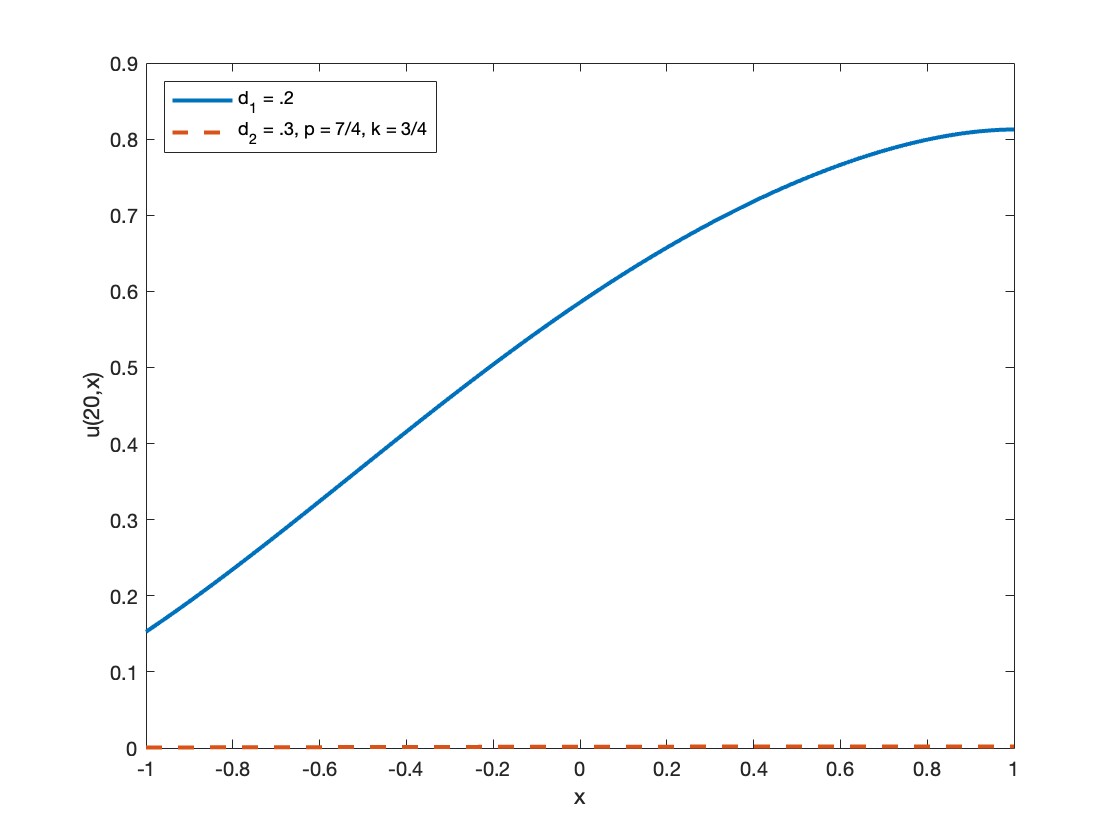}
        \caption{ $t = 10$}
        \label{fig:fraction-c}
    \end{subfigure}
    \caption{Solutions to system \eqref{eq:pde_modeld2-k} with and $p=7/4$, $q=0.5$, $d_1 = 0.2$, and $d_2 = 0.3$, and $k=3/4$.}
    \label{fig:fraction}
\end{figure}

It is also of interest to investigate the case where only a fraction of the population of species $v$ is using the $p$-Laplacian diffusion strategy and the rest of the population is diffusing with classical, linear diffusion as in equation \eqref{eq:pde_modeld2-k}. In Figure \ref{fig:fraction}, we illustrate the solutions to equation \eqref{eq:pde_modeld2-k} with $p = 7/4$ and $k=3/4$ at three different time points. In this simulation, three-quarters of the population $v$ is subject to $p$ -Laplacian diffusion and one-quarter is diffusing with $p=2$. Even when only part of the population is diffusing according to $p$-Laplacian diffusion, we can still observe a case where the slow diffuser prevails contrary to the classical case ($p=2$), shown in Figure \ref{fig:fraction-c}.

\begin{figure}[h!]
    \centering
        \begin{subfigure}[t]{0.31\textwidth}
        \centering
        \includegraphics[width = \textwidth]{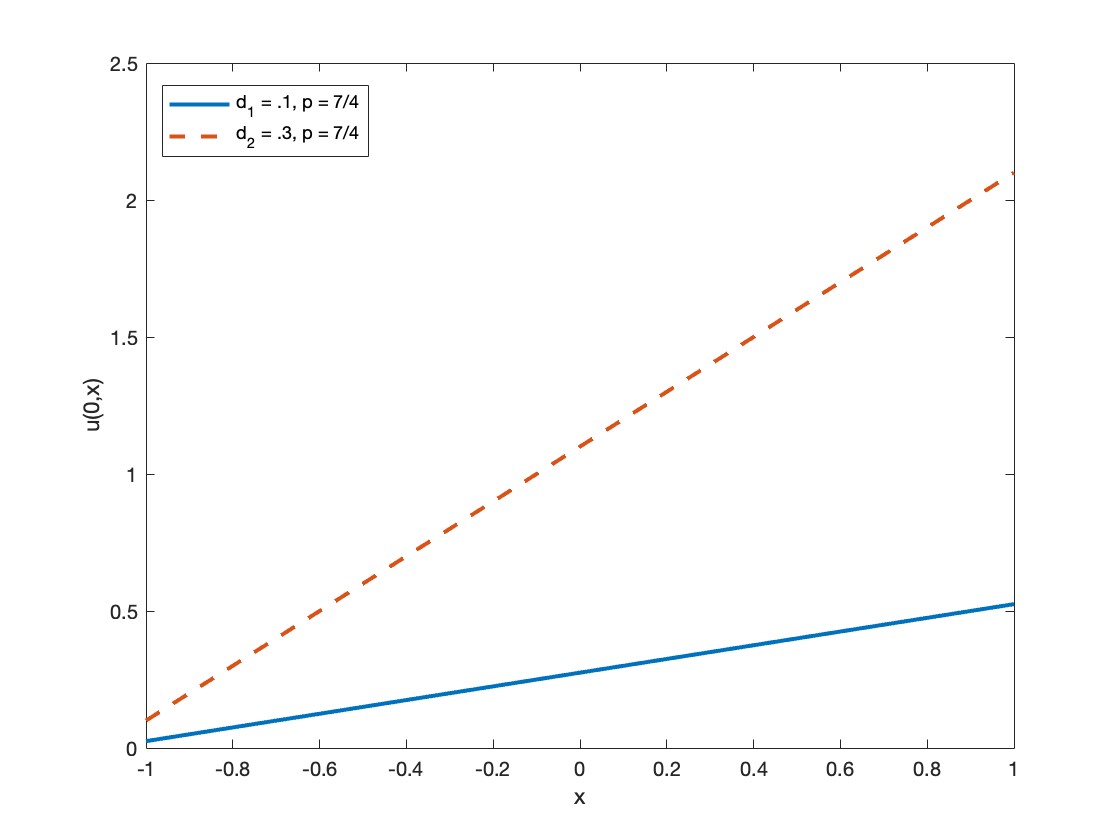}
        \caption{ $t = 0$}
        \label{fig:fastfast-drift-coefficients-a}
    \end{subfigure}%
    ~ 
    \begin{subfigure}[t]{0.31\textwidth}
        \centering
        \includegraphics[width = \textwidth]{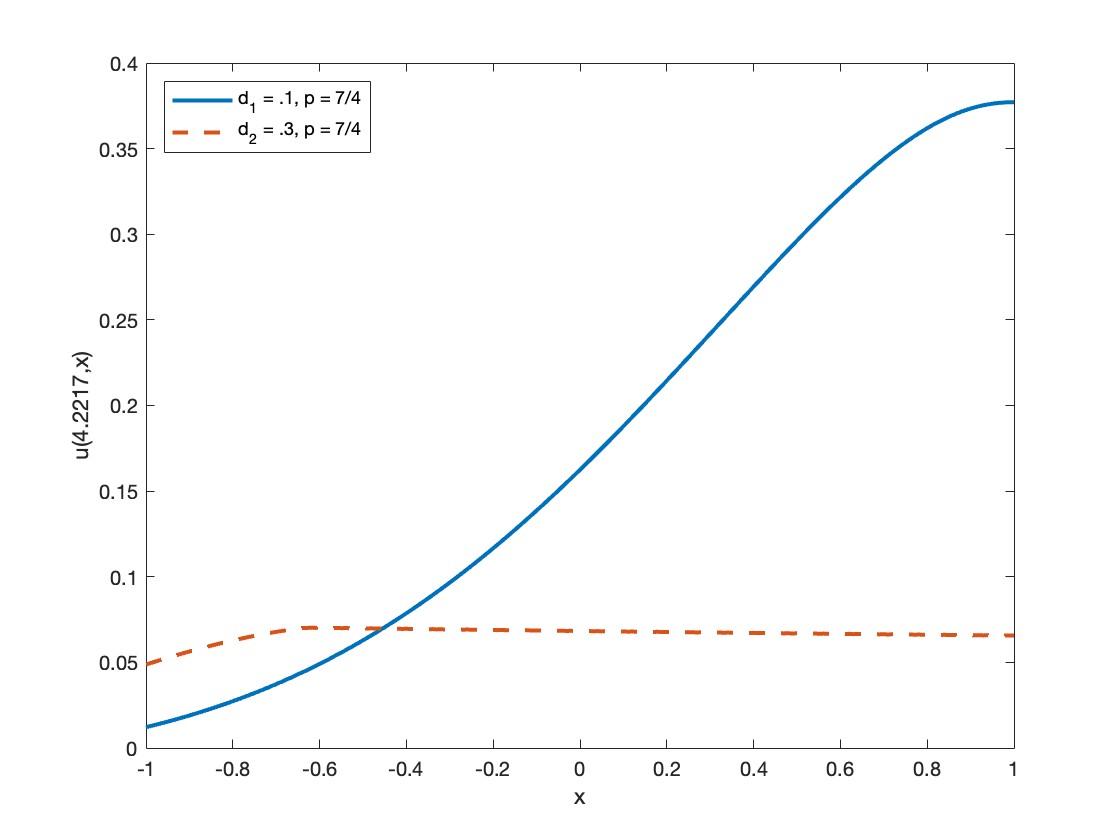}
        \caption{$t=4.2$}
        \label{fig:fastfast-drift-coefficients-b}
    \end{subfigure}%
    ~
    \begin{subfigure}[t]{0.31\textwidth}
        \centering
        \includegraphics[width = \textwidth]{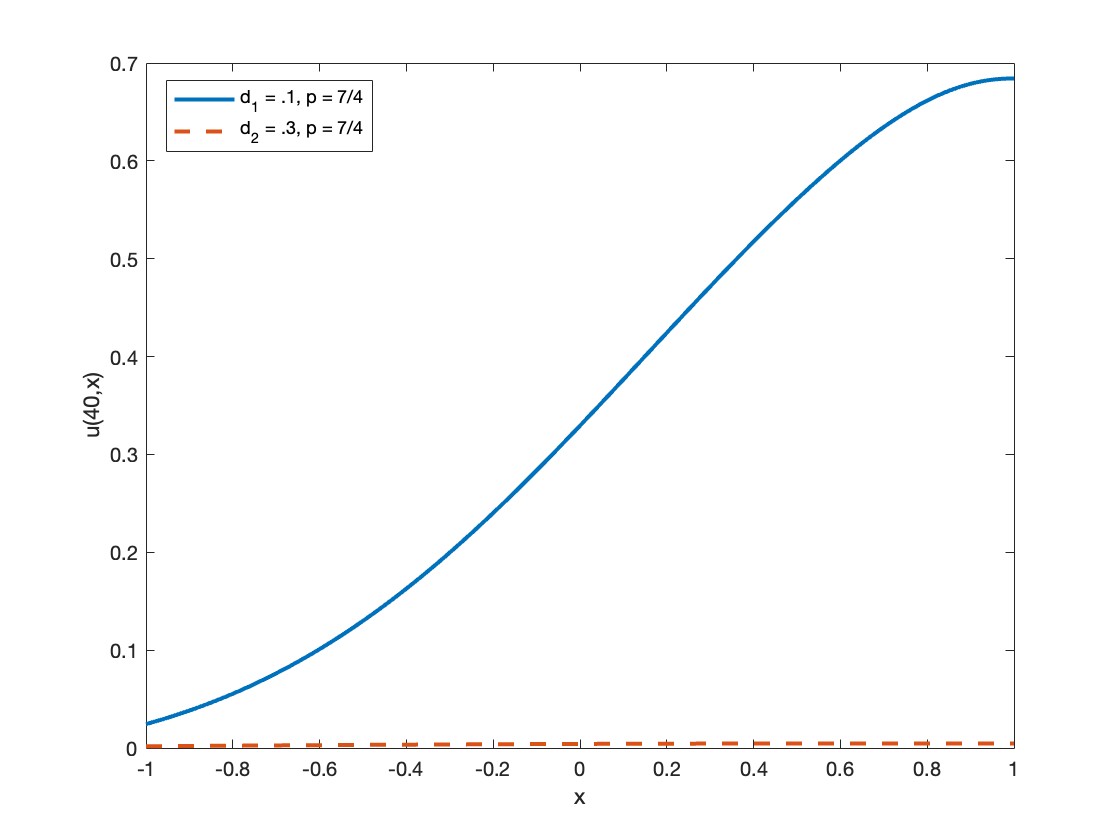}
        \caption{ $t = 40$}
        \label{fig:fastfast-drift-coefficients-c}
    \end{subfigure}
    \caption{Solutions to equation \eqref{eq:pde_modeld2}, with two species following $p-$Laplacian diffusion, $q=0.5$, $p=7/4$}
    \label{fig:fastfast-drift-coefficients}
\end{figure}

\begin{figure}[h!]
    \centering
        \begin{subfigure}[t]{0.31\textwidth}
        \centering
        \includegraphics[width = \textwidth]{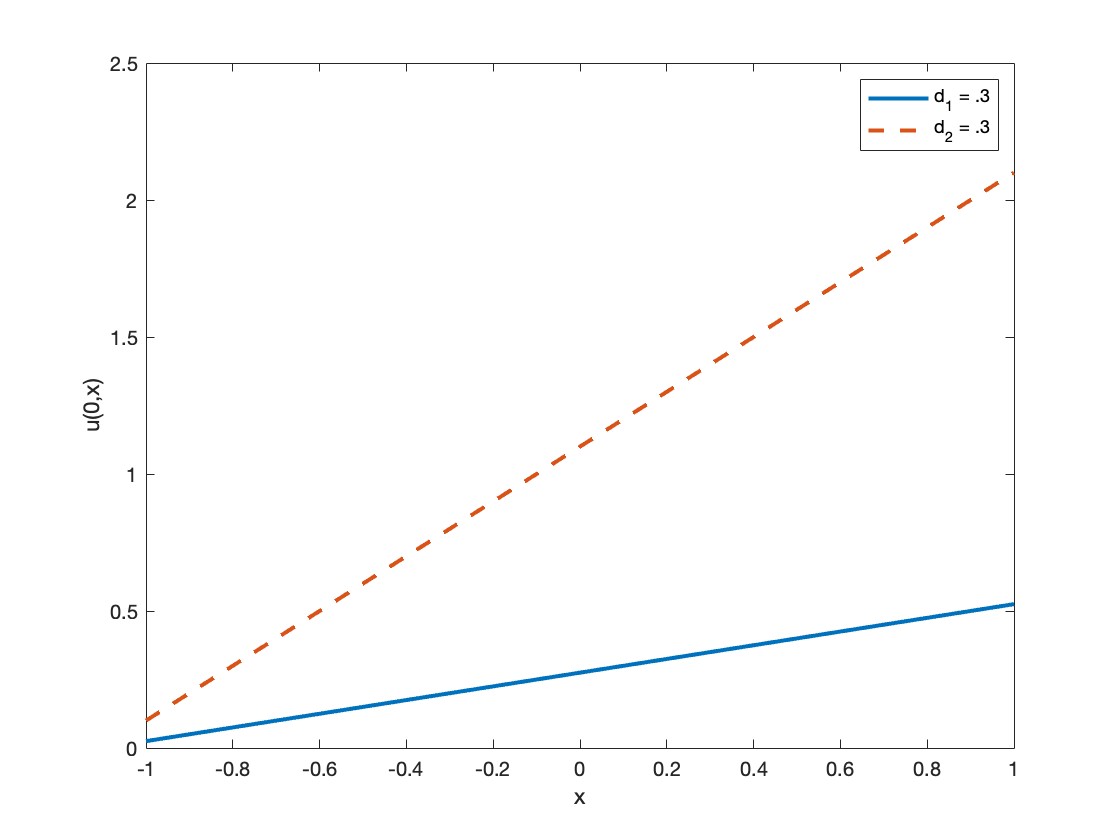}
        \caption{ $t = 0$}
        \label{fig:fastfast-drift-p-a}
    \end{subfigure}%
    ~ 
    \begin{subfigure}[t]{0.31\textwidth}
        \centering
        \includegraphics[width = \textwidth]{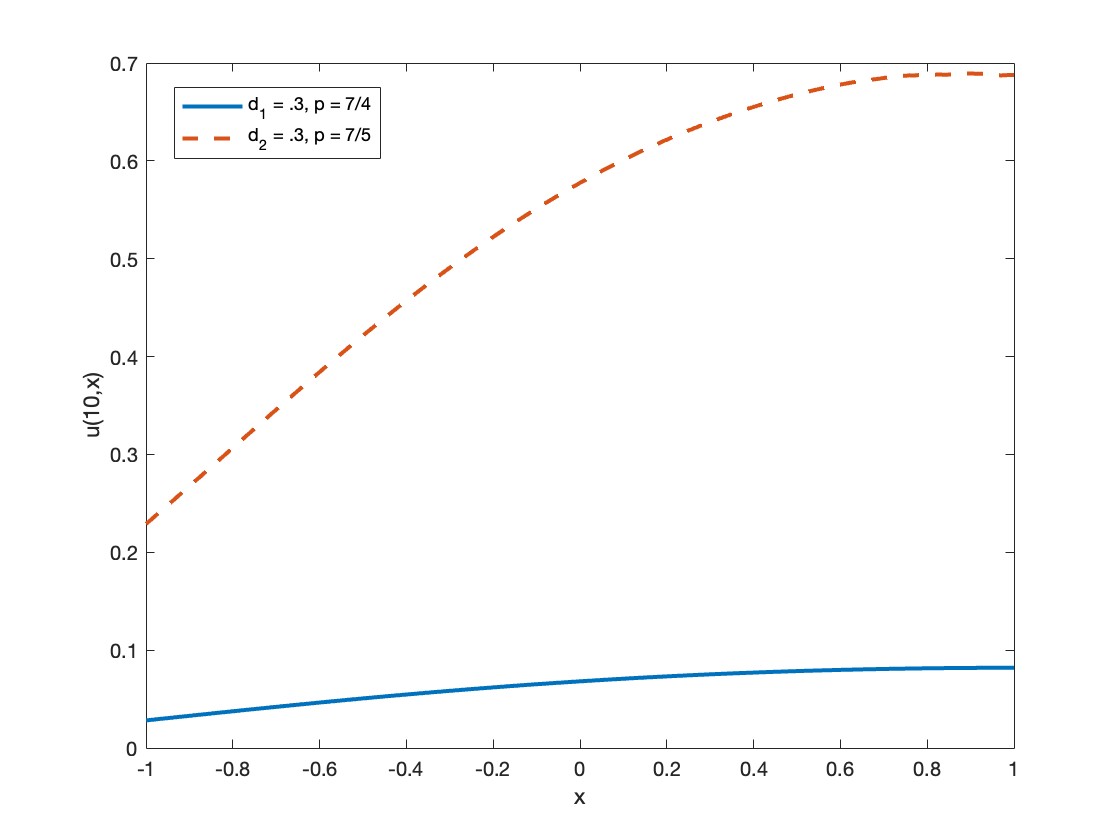}
        \caption{$t=10$}
        \label{fig:fastfast-drift-p-b}
    \end{subfigure}%
    ~
    \begin{subfigure}[t]{0.31\textwidth}
        \centering
        \includegraphics[width = \textwidth]{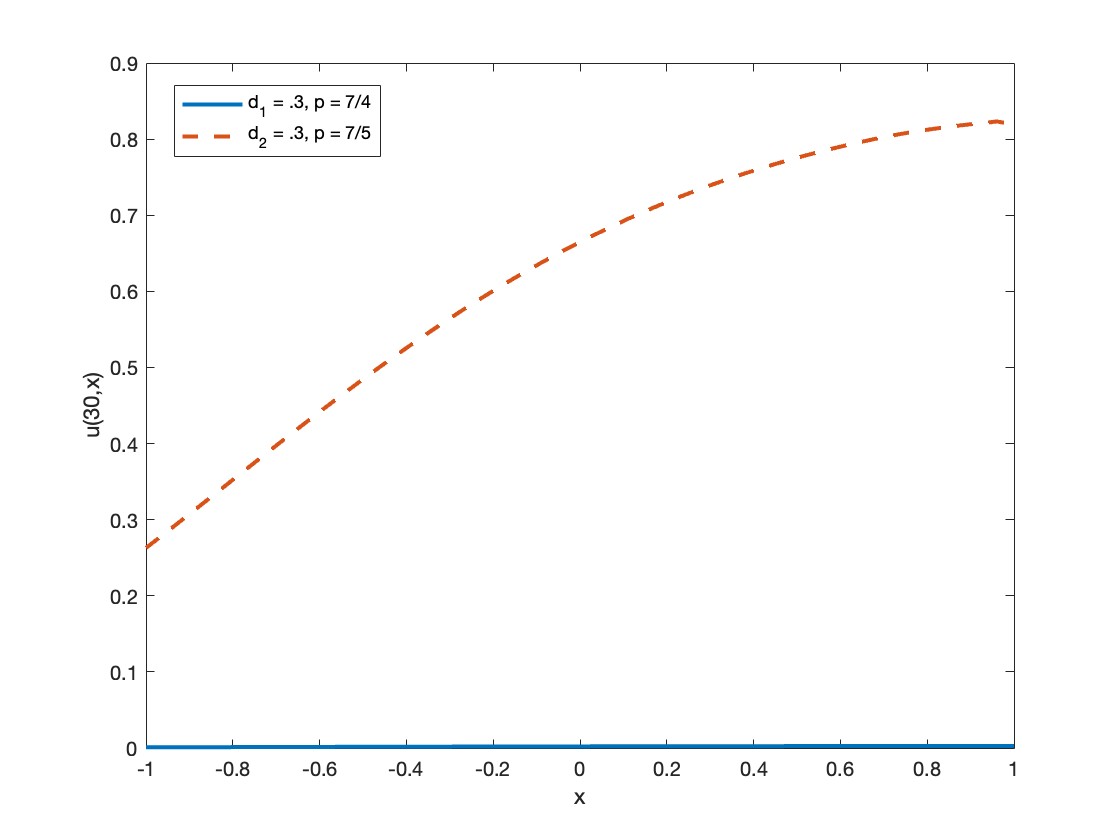}
        \caption{ $t = 30$}
        \label{fig:fastfast-drift-p-c}
    \end{subfigure}\\
        \begin{subfigure}[t]{0.31\textwidth}
        \centering
        \includegraphics[width = \textwidth]{two-p-1pt75-p-1pt4-q-pt5-t0.jpg}
        \caption{ $t = 0$}
        \label{fig:fastfast-drift-p-d}
    \end{subfigure}%
    ~ 
    \begin{subfigure}[t]{0.31\textwidth}
        \centering
        \includegraphics[width = \textwidth]{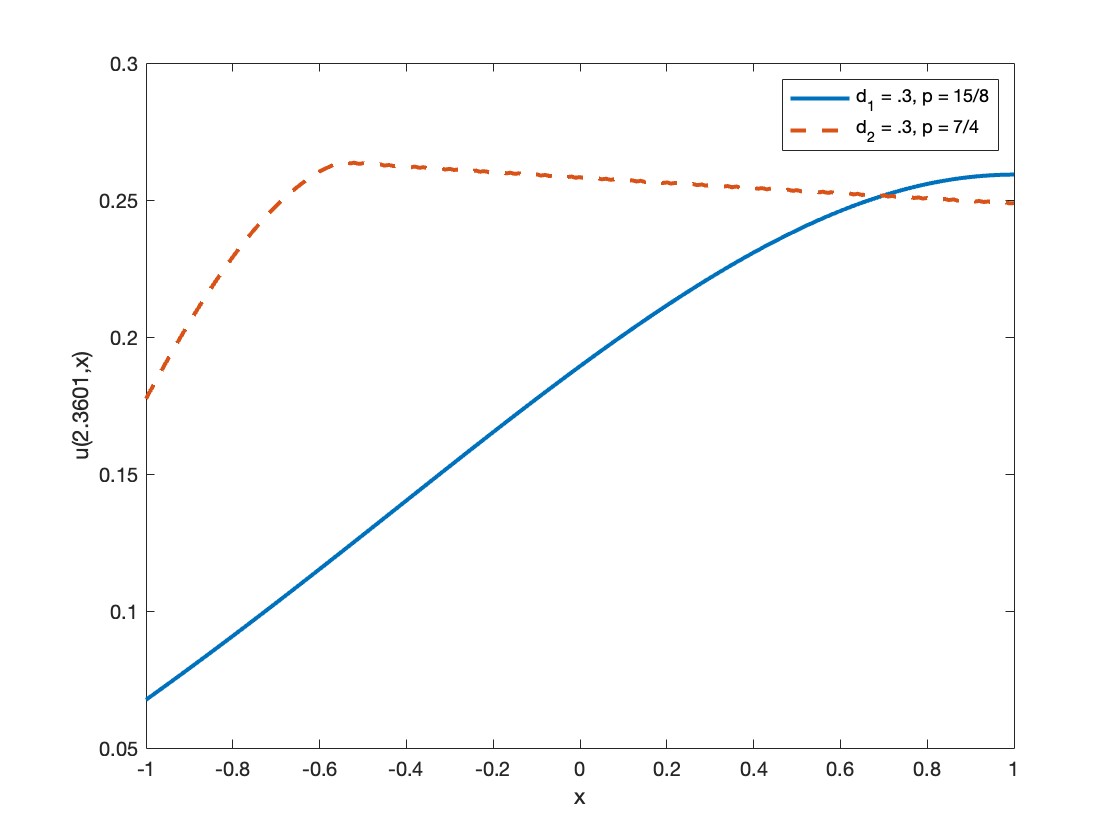}
        \caption{$t=2$}
        \label{fig:fastfast-drift-p-e}
    \end{subfigure}%
    ~
    \begin{subfigure}[t]{0.31\textwidth}
        \centering
        \includegraphics[width = \textwidth]{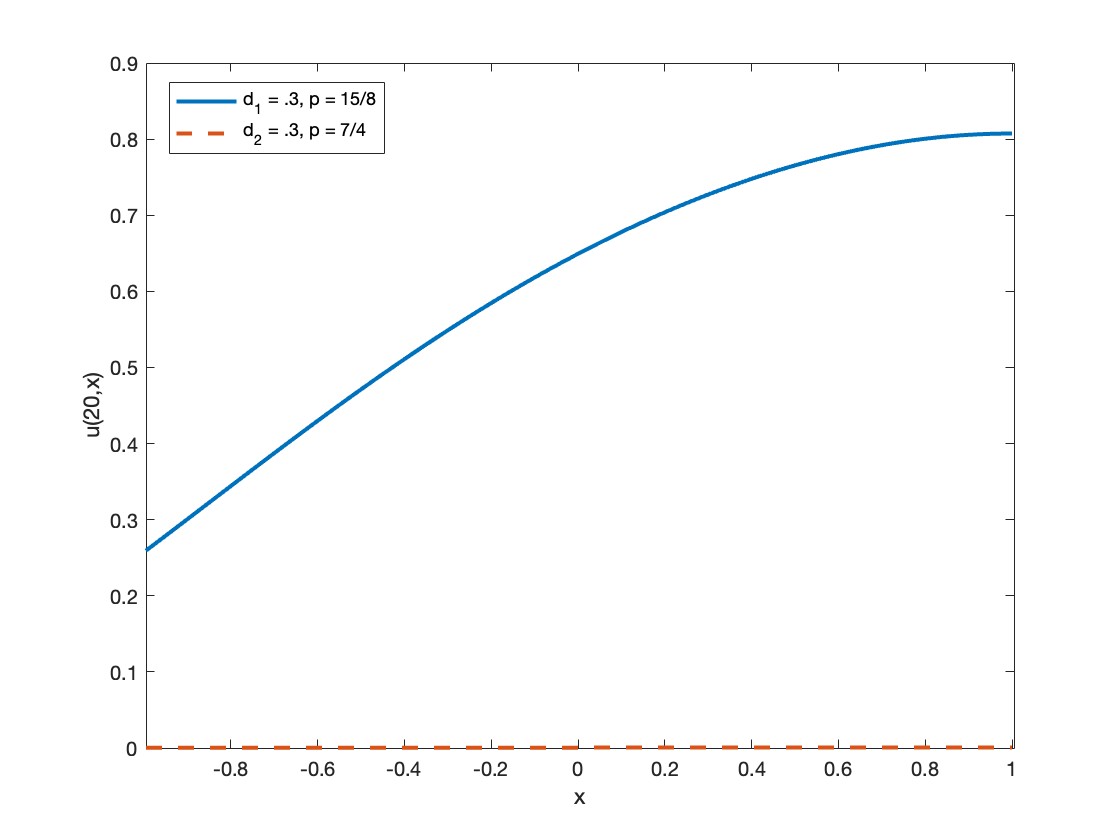}
        \caption{ $t = 20$}
        \label{fig:fastfast-drift-p-f}
    \end{subfigure}
    \caption{Solutions to equation \eqref{eq:pde_modeld2}, with two species following $p-$Laplacian diffusion, $q=0.5$, $d_1 = d_2 = 0.3$.}
    \label{fig:fastfast-drift-p}
\end{figure}

Although we do not investigate cases with both species following $p$-Laplacian diffusion analytically, it is interesting to investigate these cases numerically to motivate future work. In Figure \ref{fig:fastfast-drift-coefficients}, both species are subject to $p$-Laplacian diffusion with $p=7/4$. Despite initially having a smaller population density, shown in Figure \ref{fig:fastfast-drift-coefficients-a}, the slow diffuser outcompetes the fast diffuser, Figure \ref{fig:fastfast-drift-coefficients-c}. This is contrary to the classical case when $p=2$ and the fast diffuser always outcompetes the slow diffuser. We also consider cases where $d_1 = d_2$, but the two species have different $p$-values. We present two such cases in Figure \ref{fig:fastfast-drift-p}. In the first case, Figures \ref{fig:fastfast-drift-p-a}-\ref{fig:fastfast-drift-p-c}, the species with the smaller $p$-value outcompetes the species with the larger $p$-value, shown in Figure \ref{fig:fastfast-drift-p-c}. In the second case, the opposite happens. The species with the larger $p$-value outcompetes the species with the smaller $p$-value \ref{fig:fastfast-drift-p-f}. Ultimately, these simulations suggest there are rich dynamics that can occur when both species are following $p$-Laplacian diffusion with $p \in (1,2]$, and there is still much to be explored.

\section{Discussion and Conclusion}

In the current manuscript, we considered a novel model for two-species competition, given by \eqref{eq:pde_modeld211}. Here, one of the competitors disperses via the $p$-Laplacian operator, when $1 < p < 2$. This is the case of ``fast" diffusion. Equation \eqref{eq:pde_modeld211} is a degenerate equation, and it needs to be analyzed in the weak framework. Our main result is to show global in time existence of weak solutions to \eqref{eq:pde_modeld211} for any positive initial data in the regime $\frac{3}{2} < p < 2$, via Theorem \ref{thm:mt1}. The dynamics of \eqref{eq:pde_modeld211} are also very interesting, in particular while the faster diffuser always wins in the drift case, via classical results ($p=2$ case), we see the ``much faster" diffuser could lose ($1<p<2$ case) - this is seen via Theorems \ref{thm:FFTEdd}-\ref{thm:FFTEdd1}.

The numerical investigation of equation \eqref{eq:pde_modeld211} illustrates specific examples of Theorem \ref{thm:FFTEdd} and Theorem \ref{thm:FFTEdd1}. These investigations highlight when a species diffuses with a $p$-Laplacian diffusion strategy, classical results may not hold, and the slow diffuser may outcompete the fast diffuser in an environment with drift. Additionally, these investigations highlight the dynamics still to be explored with this system, particularly if both species are using a $p$-Laplacian diffusion strategy.

\bibliographystyle{unsrt}
\bibliography{ref}

\appendix

\section{Functional Preliminaries}

\begin{lemma}
\label{lem:wlp}
Consider exponents $0 < p_0 < p_1 < \infty$ and a domain $\Omega$ that is a closed and bounded in $\mathbb{R}^{n}$, $n\geq 1$, and a $u(x) \in L^{p_1}(\Omega)$. Then,

\begin{equation}
||u||_{L^{p_{\theta}}(\Omega)} \leq C||u||^{1-\theta}_{L^{p_{0}}(\Omega)} ||u||^{\theta}_{L^{p_{1}}(\Omega)}
\end{equation}

for all $0 \leq \theta \leq 1$, and where $p_{\theta}$ is defined as $\frac{1}{p_{\theta}} = \frac{1-\theta}{p_0} + \frac{\theta}{p_1}$.
\end{lemma}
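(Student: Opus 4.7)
The plan is to derive this inequality as a direct consequence of H\"older's inequality, using the classical log-convexity argument for $L^p$ norms. The structural clue is the very definition $1/p_\theta = (1-\theta)/p_0 + \theta/p_1$, which is precisely the conjugacy relation that H\"older needs; the argument essentially reverse-engineers this identity.

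First, I would split the integrand multiplicatively as
\[ |u|^{p_\theta} = |u|^{p_\theta(1-\theta)} \cdot |u|^{p_\theta\theta}, \]
and then pick the H\"older exponents $s = p_0/[p_\theta(1-\theta)]$ and $t = p_1/[p_\theta\theta]$. A direct computation gives
\[ \frac{1}{s} + \frac{1}{t} = \frac{p_\theta(1-\theta)}{p_0} + \frac{p_\theta\theta}{p_1} = p_\theta \left(\frac{1-\theta}{p_0} + \frac{\theta}{p_1}\right) = 1, \]
so $(s,t)$ is indeed a conjugate pair. (The endpoint cases $\theta \in \{0,1\}$ are trivial and can be handled separately.)

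Second, I would apply H\"older's inequality with the exponent pair $(s,t)$ to the decomposition above. Since $p_\theta(1-\theta)\cdot s = p_0$ and $p_\theta\theta\cdot t = p_1$, this yields
\[ \int_\Omega |u|^{p_\theta}\, dx \leq \left(\int_\Omega |u|^{p_0}\, dx\right)^{p_\theta(1-\theta)/p_0} \left(\int_\Omega |u|^{p_1}\, dx\right)^{p_\theta\theta/p_1}. \]
Raising both sides to the $1/p_\theta$ power then produces exactly the stated inequality, with $C=1$. Note that boundedness of $\Omega$ is not actually required for the inequality itself; it only guarantees the set inclusions $L^{p_1}(\Omega)\hookrightarrow L^{p_\theta}(\Omega)\hookrightarrow L^{p_0}(\Omega)$, so that the hypothesis $u\in L^{p_1}(\Omega)$ automatically makes all three norms finite.

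There is no substantive obstacle here: the entire result is a single invocation of H\"older's inequality, and the only subtle step is recognizing that the defining equation for $p_\theta$ is exactly what forces $s$ and $t$ to be conjugate. The inequality is in fact sharp, with $C=1$ attained when $|u|$ is a multiple of an indicator function of a positive-measure set.
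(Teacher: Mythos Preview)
Your argument is correct: the interpolation inequality is exactly a single application of H\"older with conjugate exponents $s=p_0/[p_\theta(1-\theta)]$ and $t=p_1/[p_\theta\theta]$, and you have verified the conjugacy relation and the endpoint cases cleanly. The paper itself states Lemma~\ref{lem:wlp} as a functional preliminary in the appendix without proof, so there is no alternative argument to compare against; your proof is the standard one and in fact sharper than the statement, since you obtain $C=1$.
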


\begin{lemma}
\label{lem:gns}
Consider a $\phi \in W^{m,q^{'}}(\Omega) \cap L^{q}(\Omega)$. Then if $p^{'}, q^{'}, q \geq 1, 0 \leq \theta \leq 1$, and

\begin{equation}
k - \frac{n}{p^{'}} \leq \theta \left(  m - \frac{n}{q^{'}} \right) - (1 - \theta) \frac{n}{q},
\end{equation}
there exists a constant C such that,

\begin{equation}
	||\phi||_{W^{k,p^{'}}(\Omega)} \leq C ||\phi||^{\theta}_{W^{m,q^{'}}(\Omega)} ||\phi||^{1 - \theta}_{L^{q}(\Omega)} 
	\end{equation}

\end{lemma}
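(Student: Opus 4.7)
The plan is to treat this as a standard Gagliardo--Nirenberg--Sobolev interpolation inequality and establish it through three reductions: first from the inequality scaling relation to the equality scaling relation, then from the bounded domain $\Omega$ to $\mathbb{R}^n$ via an extension operator, and finally to the model case on $\mathbb{R}^n$ for smooth compactly supported functions, which is handled by combining a Sobolev embedding with H\"older interpolation.

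First I would observe that it suffices to prove the statement when the dimensional balance
\[
k - \tfrac{n}{p'} = \theta \bigl( m - \tfrac{n}{q'} \bigr) - (1-\theta)\tfrac{n}{q}
\]
holds with equality. Indeed, once the equality case is known, the more general inequality form follows on a bounded domain by interpolating with the $L^q$-continuous embedding $W^{k,p'}(\Omega) \hookrightarrow W^{k,\tilde p'}(\Omega)$ for an appropriate $\tilde p' \geq p'$ that realises the equality; since $|\Omega|<\infty$, such downgrading of integrability exponents is harmless up to a multiplicative constant depending on $|\Omega|$. Next, invoking the smoothness of $\partial\Omega$, I would fix a bounded linear extension operator $E: W^{m,q'}(\Omega) \to W^{m,q'}(\mathbb{R}^n)$ with $E\phi = \phi$ on $\Omega$ and $\|E\phi\|_{W^{m,q'}(\mathbb{R}^n)} \lesssim \|\phi\|_{W^{m,q'}(\Omega)}$, plus the analogous $L^q$ bound, so the entire estimate reduces to the $\mathbb{R}^n$ case for $E\phi$.

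For the model case on $\mathbb{R}^n$, I would work with $\phi \in C_c^\infty(\mathbb{R}^n)$ by density. The critical Sobolev embedding, $W^{m,q'}(\mathbb{R}^n) \hookrightarrow L^{r}(\mathbb{R}^n)$ with $\tfrac{1}{r} = \tfrac{1}{q'} - \tfrac{m-k}{n}$ (and an analogous embedding for higher-order derivatives into $L^{r}$ with suitable exponent), combined with the pointwise H\"older interpolation $\|D^k \phi\|_{L^{p'}} \leq \|D^k \phi\|_{L^r}^\theta \|D^k \phi\|_{L^q}^{1-\theta}$ whenever $\tfrac{1}{p'} = \tfrac{\theta}{r} + \tfrac{1-\theta}{q}$, yields the desired bound after checking that the chosen $\theta$ matches the one dictated by the scaling relation. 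The scaling invariance under $\phi \mapsto \phi(\lambda \cdot)$ pins down $\theta$ uniquely and confirms internal consistency.

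The main obstacle in executing this plan rigorously is bookkeeping: matching the exponents $k, m, p', q', q$ to the scaling relation and to the critical Sobolev exponent $r$, and handling borderline cases where the pure Sobolev embedding fails (for instance $m - n/q' = 0$, requiring BMO or Orlicz-space substitutes) or where $\theta \in \{0,1\}$ trivialises the statement. A minor secondary issue is verifying that the extension operator respects the $L^q$ norm as well as the $W^{m,q'}$ norm; this follows from the standard reflection/partition-of-unity construction on a $C^1$ boundary, but must be invoked explicitly. Since the present paper only applies the lemma in the regime $n \in \{1,2\}$ with generous room in the exponents (see the uses in Theorems \ref{thm:FFTEdd} and \ref{thm:FFTEdd1}), these boundary degeneracies do not arise and the plan above goes through cleanly.
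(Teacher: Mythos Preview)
The paper does not prove this lemma at all: it appears in the appendix under ``Functional Preliminaries'' as a stated classical result (the Gagliardo--Nirenberg--Sobolev inequality), with no argument given. So there is no paper proof to compare against; the authors simply invoke it as standard.

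Your outline is a reasonable route to the inequality, and for the applications the paper actually makes (Theorems~\ref{thm:FFTEdd} and~\ref{thm:FFTEdd1}, which only use the case $k=0$, $m=1$, $n\in\{1,2\}$) the plan goes through as written: Sobolev embedding $W^{1,q'}\hookrightarrow L^r$ followed by H\"older interpolation between $L^r$ and $L^q$ is exactly the elementary argument. One caution for the general statement with $k\ge 1$: your H\"older step $\|D^k\phi\|_{L^{p'}} \le \|D^k\phi\|_{L^r}^\theta \|D^k\phi\|_{L^q}^{1-\theta}$ requires control of $D^k\phi$ in $L^q$, which is not part of the hypothesis ($\phi\in L^q$, not $D^k\phi\in L^q$). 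The standard remedy is either an inductive reduction on $k$ or a direct real/complex interpolation argument between the endpoint spaces; your parenthetical about ``an analogous embedding for higher-order derivatives'' gestures at this but does not resolve it. Since the paper only needs $k=0$, this gap is irrelevant to its uses of the lemma.
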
	

Now we state the classical Aubin-Lions compactness Lemma,

\begin{lemma}
\label{lem:al}
Let $X_0, X$ and $X_1$ be three reflexive Banach spaces with 

$X_0 \hookrightarrow \hookrightarrow X \hookrightarrow X_1$. That is $ X_0$ is compactly embedded in $X$ and that $X$ is continuously embedded in $X_1$. For $1\leq p,q \leq \infty$, let

 \begin{equation}
W = \{ u \in L^{p^{'}}([0,T];X_0) \ | \  u^{'} \in L^{q^{'}}([0,T];X_1)  \}
\end{equation}

(i) If $p^{'} < \infty$ then the embedding of $W$ into $L^{p^{'}}([0,T];X)$ is compact.

(ii) If $p^{'} = \infty$ and $q^{'} > 1$ then the embedding of $W$ into $C([0,T];X)$ is compact.
\end{lemma}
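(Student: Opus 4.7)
The plan is to prove Aubin--Lions via the classical two-step approach: first establish Ehrling's interpolation inequality, exploiting the compact embedding $X_0 \hookrightarrow\hookrightarrow X$, and then combine it with a uniform time-translation estimate derived from the $L^{q'}([0,T];X_1)$-bound on $u'$, concluding via Simon's Bochner-space version of Fr\'echet--Kolmogorov.

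First I would prove the Ehrling (Lions) inequality: for every $\delta > 0$ there exists $C_\delta > 0$ with
\begin{equation}
||v||_X \leq \delta ||v||_{X_0} + C_\delta ||v||_{X_1} \qquad \text{for all } v \in X_0.
\end{equation}
This is a standard contradiction argument: a violating sequence would converge strongly in $X$ to a nonzero element by $X_0 \hookrightarrow\hookrightarrow X$, while its $X_1$-norm would tend to zero, contradicting $X \hookrightarrow X_1$.

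For part (i), let $\{u_n\}$ be bounded in $W$. After extraction, $u_n \rightharpoonup u$ weakly in $L^{p'}([0,T];X_0)$ and $u_n' \rightharpoonup u'$ weakly in $L^{q'}([0,T];X_1)$. Setting $w_n := u_n - u$, applying Ehrling pointwise in $t$, raising to the $p'$-th power, and integrating in time yields
\begin{equation}
||w_n||^{p'}_{L^{p'}(0,T;X)} \leq C_1 \delta^{p'} + C_2 C_\delta^{p'} ||w_n||^{p'}_{L^{p'}(0,T;X_1)},
\end{equation}
where $C_1$ absorbs the uniform $L^{p'}(X_0)$-bound on $w_n$. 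Since $\delta$ is arbitrary, the task reduces to showing $w_n \to 0$ strongly in $L^{p'}([0,T];X_1)$.

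The main obstacle is this last strong convergence, and the key ingredient is uniform equicontinuity of time translates: writing $w_n(t+h) - w_n(t) = \int_t^{t+h} w_n'(s)\,ds$ and applying H\"older together with the uniform bound on $w_n'$ in $L^{q'}([0,T];X_1)$ gives
\begin{equation}
\sup_n \, ||w_n(\cdot + h) - w_n(\cdot)||_{L^{q'}(0,T-h;X_1)} \longrightarrow 0 \quad \text{as } h \to 0.
\end{equation}
Combined with pointwise boundedness of $\{w_n(t)\}$ in $X_0$, which is precompact in $X_1$ via $X_0 \hookrightarrow\hookrightarrow X \hookrightarrow X_1$, Simon's compactness criterion yields relative compactness of $\{w_n\}$ in $L^{p'}([0,T];X_1)$; weak convergence then identifies the strong limit as zero. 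For part (ii), with $p' = \infty$ and $q' > 1$, the same integral representation yields uniform H\"older continuity in $t$ with values in $X_1$; Ehrling upgrades this equicontinuity to hold in the $X$-norm, and Arz\`ela--Ascoli in $C([0,T];X)$ closes the argument.
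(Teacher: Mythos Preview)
The paper does not prove this lemma: it is stated in the appendix as a classical functional-analytic preliminary (the Aubin--Lions compactness lemma), with no proof given, and is simply invoked elsewhere in the text. So there is no ``paper's own proof'' to compare against.

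Your outline follows the standard Ehrling--Simon route and is essentially correct. One small imprecision: you write ``pointwise boundedness of $\{w_n(t)\}$ in $X_0$,'' but a uniform $L^{p'}([0,T];X_0)$-bound does not give pointwise-in-$t$ boundedness in $X_0$. What Simon's criterion actually requires is that, for each $0<t_1<t_2<T$, the set of time averages $\{\int_{t_1}^{t_2} w_n(s)\,ds\}$ be relatively compact in $X_1$; this does follow from the $L^{p'}(X_0)$-bound via H\"older and the compact embedding $X_0 \hookrightarrow\hookrightarrow X \hookrightarrow X_1$. With that correction the argument goes through.
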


\begin{lemma}
\label{lem:pc1}
    Consider functions $\phi, \sigma \in C^{1}(\Omega), \Omega \subset \mathbb{R}^{n}, n=1,2, |\Omega| < \infty$, $\epsilon > 0$. Then if $p \geq 2$ we have,
    \begin{equation}
        (|\nabla \phi|^{2} + \epsilon)^{\frac{p-2}{2}}|\nabla \phi|^{2} \geq |\nabla \phi|^{p},
\end{equation}

\begin{equation}
        (|\nabla \phi|^{2} + \epsilon)^{\frac{p-2}{2}}|\nabla \phi| \leq C\left(|\nabla \phi|^{p}+1\right).
\end{equation}

and 

\begin{equation}
        (|\phi|^{p-2}\phi - |\sigma|^{p-2}\sigma)\cdot (\phi - \sigma) \geq C |\phi - \sigma|^{p}
\end{equation}

\end{lemma}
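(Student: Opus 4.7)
The plan is to verify each of the three inequalities separately; the first two are elementary algebraic bounds on the regularized flux, while the third is the classical uniform monotonicity inequality for the $p$-Laplacian in the non-degenerate regime $p \geq 2$.

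For the first inequality, since $p \geq 2$ the exponent $\tfrac{p-2}{2}$ is non-negative, and monotonicity of $t \mapsto t^{(p-2)/2}$ on $[0,\infty)$ combined with $|\nabla \phi|^2 + \epsilon \geq |\nabla \phi|^2$ gives $(|\nabla \phi|^2 + \epsilon)^{(p-2)/2} \geq |\nabla \phi|^{p-2}$; multiplying by $|\nabla \phi|^2$ closes it at once. For the second inequality, I would absorb the extra power via $|\nabla \phi| \leq (|\nabla \phi|^2 + \epsilon)^{1/2}$ to reduce the left-hand side to $(|\nabla \phi|^2 + \epsilon)^{(p-1)/2}$. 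The elementary sub-additivity $(a+b)^r \leq C_r(a^r + b^r)$ with $r = (p-1)/2$ then produces $C(|\nabla \phi|^{p-1} + \epsilon^{(p-1)/2})$, after which a final Young's inequality bounds $|\nabla \phi|^{p-1}$ by $|\nabla \phi|^p + 1$ up to a constant, while $\epsilon^{(p-1)/2}$ is absorbed into the additive constant since $\epsilon$ is fixed.

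The third inequality is the main obstacle, as the first two reduce to book-keeping. I would proceed by a parametric argument: set $\psi(t) = (1-t)\sigma + t\phi$ and $G(t) = |\psi(t)|^{p-2}\psi(t)$, so that the fundamental theorem of calculus gives
\begin{equation*}
\bigl(|\phi|^{p-2}\phi - |\sigma|^{p-2}\sigma\bigr)\cdot(\phi - \sigma) = \int_0^1 G'(t)\cdot(\phi - \sigma)\,dt .
\end{equation*}
A direct differentiation yields
\begin{equation*}
G'(t)\cdot(\phi - \sigma) = |\psi(t)|^{p-2}|\phi - \sigma|^2 + (p-2)\,|\psi(t)|^{p-4}\bigl(\psi(t)\cdot(\phi - \sigma)\bigr)^2 ,
\end{equation*}
which is non-negative term by term since $p \geq 2$. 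Extracting the $|\phi - \sigma|^p$ lower bound then reduces to showing $|\psi(t)| \gtrsim |\phi - \sigma|$ on a sub-interval of $[0,1]$ of positive length; this follows from the case split that one of $|\sigma|$ or $|\phi|$ exceeds $\tfrac{1}{2}|\phi - \sigma|$ by the triangle inequality (so $|\psi(t)|$ is controlled below by $c|\phi-\sigma|$ on either $[0,\tfrac{1}{2}]$ or $[\tfrac{1}{2},1]$), or equivalently from the well-known uniform convexity of $x \mapsto \tfrac{1}{p}|x|^p$ for $p \geq 2$. Integrating $|\psi(t)|^{p-2}|\phi-\sigma|^2$ against this lower bound produces the desired $C|\phi-\sigma|^p$ with $C$ depending only on $p$.
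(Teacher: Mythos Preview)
The paper states this lemma in the appendix as a standard auxiliary fact and gives no proof whatsoever, so there is nothing to compare against. Your argument is correct and complete: the first two inequalities are indeed trivial monotonicity/sub-additivity facts for $p\geq 2$, and your interpolation-along-the-segment computation for the third inequality is the standard route to the uniform monotonicity of the $p$-Laplacian in the non-degenerate range. One minor remark: for the second inequality you absorb $\epsilon^{(p-1)/2}$ into the additive constant by saying ``$\epsilon$ is fixed''; in the paper's applications one actually needs uniformity as $\epsilon\to 0$, but since throughout the paper $0<\epsilon\ll 1$ one has $\epsilon^{(p-1)/2}\leq 1$, so the constant is genuinely independent of $\epsilon$ and your bound is exactly what is needed.
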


We next present the main result of this section.

\begin{theorem}
Consider the system \eqref{eq:pde_model}. Then there exists initial data $(u_{0},v_{0}) \in W^{1,2} (\Omega)$ for which there exist local weak solutions to \eqref{eq:pde_model}.
\end{theorem}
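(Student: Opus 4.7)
The plan is to follow the regularization strategy already developed for the more complex system \eqref{eq:pde_modeld211}, but exploit the extra structure of \eqref{eq:pde_model}. The mixed diffusion $(1-k)\Delta v + k\,\nabla\cdot(|\nabla v|^{p-2}\nabla v)$ contains a genuinely non-degenerate linear part (since $1-k>0$), so one gets $W^{1,2}$-parabolic regularity "for free" and does not need the full Alikakos--Moser iteration from Lemma \ref{lem:ma1}. Moreover, since the statement only asks for \emph{some} initial data in $W^{1,2}(\Omega)$ that admits a local weak solution, it suffices to start from smoother data, e.g.\ $(u_0,v_0)\in W^{1,\infty}(\Omega)^2 \subset W^{1,2}(\Omega)^2$, which gives us clean a priori estimates.

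First I would regularize the degenerate term exactly as in \eqref{eq:pde_modeld2}, replacing $|\nabla v|^{p-2}\nabla v$ with $(|\nabla v^\epsilon|^2+\epsilon)^{(p-2)/2}\nabla v^\epsilon$. For each fixed $\epsilon>0$ the resulting system is uniformly parabolic with smooth semilinearity, so short-time classical solutions $(u^\epsilon,v^\epsilon)$ exist on some maximal interval $[0,T_{\max,\epsilon})$ by standard semilinear theory. Next I would derive $\epsilon$-uniform energy estimates on a shorter interval $[0,T^*]$ with $T^*$ depending only on the data: testing the $v^\epsilon$ equation against $v^\epsilon$, and using positivity of $1-k$ together with the pointwise inequality $(|\nabla v^\epsilon|^2+\epsilon)^{(p-2)/2}|\nabla v^\epsilon|^2 \ge |\nabla v^\epsilon|^p - \epsilon^{p/2}$ (as in \eqref{eq:in1}), one obtains
\begin{equation*}
\tfrac{1}{2}\tfrac{d}{dt}\|v^\epsilon\|_{L^2}^{2} + d_2(1-k)\|\nabla v^\epsilon\|_{L^2}^{2} + k\|\nabla v^\epsilon\|_{L^p}^{p} + \|v^\epsilon\|_{L^3}^{3} \le M\|v^\epsilon\|_{L^2}^{2} + C_\epsilon,
\end{equation*}
yielding $v^\epsilon$ bounded uniformly in $L^\infty(0,T^*;L^2(\Omega))\cap L^2(0,T^*;W^{1,2}(\Omega))\cap L^p(0,T^*;W^{1,p}(\Omega))$. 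The analogous (and easier) estimate for the non-degenerate $u^\epsilon$ equation gives a $W^{1,2}$-type bound on $u^\epsilon$. A duality argument of the form used in Theorem \ref{thm:w1s} then controls $\partial_t v^\epsilon$ in $L^{p'}(0,T^*;(W^{1,p}(\Omega))^*)$.

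Equipped with these bounds, I would apply the Aubin--Lions lemma (Lemma \ref{lem:al}) with $X_0 = W^{1,p}(\Omega)$, $X = L^p(\Omega)$, $X_1 = (W^{1,p}(\Omega))^*$ to extract a subsequence converging strongly in $L^p(0,T^*;L^p(\Omega))$ and weakly in $L^p(0,T^*;W^{1,p}(\Omega))$, along with a weak-$*$ limit $\Gamma$ for the regularized flux. Passing to the limit in the linear and semilinear terms is routine: the quadratic reaction terms use the compact embedding $W^{1,p}(\Omega)\hookrightarrow\hookrightarrow L^2(\Omega)$ valid for $n=1,2$.

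The main obstacle, as usual for degenerate $p$-Laplacian problems, is identifying $\Gamma$ with $|\nabla v|^{p-2}\nabla v$, since the flux is nonlinear and only weak convergence is available on gradients. I would resolve this with the Minty--Browder monotonicity trick carried out in Lemma \ref{lem:cnt}: testing the regularized equation against $v-v^\epsilon$, using the time-derivative bound and strong $L^p$-convergence of $v^\epsilon$ to show $I_1\to 0$ in the decomposition \eqref{eq:i1i}, combined with $I_3\ge 0$ from Lemma \ref{lem:pc1}, forces \eqref{eq:iiel}, and then the standard parameter perturbation $\lambda\psi = v-\phi$ with $\lambda\to 0^\pm$ pins down $\Gamma = |\nabla v|^{p-2}\nabla v$ almost everywhere. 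The argument transfers essentially verbatim because the pure Neumann conditions in \eqref{pde_model_bc} generate no boundary contributions. Substituting back into the weak form then gives the desired local weak solution in the sense of Definition \ref{def:weak}.
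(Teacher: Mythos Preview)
The paper actually states this theorem without proof: it sits at the end of the appendix with no argument following it. Your proposal is sound and is exactly the natural adaptation of the regularize--compactness--Minty pipeline that the paper develops in detail for the drift system \eqref{eq:pde_modeld211} in Section~3 (culminating in Theorem~\ref{thm:mt1}). Two of your observations in fact make the argument \emph{easier} here than in the paper's main analysis. First, the non-degenerate linear part $d_2(1-k)\Delta v$ with $1-k>0$ gives an $L^2(0,T^*;W^{1,2})$ bound for free, so one does not need the full Alikakos--Moser machinery of Lemma~\ref{lem:ma1}. Second, since \eqref{eq:pde_model} carries no drift, the restriction $p>\tfrac{3}{2}$ that the paper needs in Theorem~\ref{thm:mt1-1} to absorb the advective term is unnecessary; your energy estimate goes through for the whole range $1<p<2$. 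The Minty--Browder identification of $\Gamma$ with $|\nabla v|^{p-2}\nabla v$ transfers verbatim from Lemma~\ref{lem:cnt}, and, as you note, the Neumann conditions \eqref{pde_model_bc} kill all boundary terms.
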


\end{document}